\theoremstyle{plain}
\newtheorem{theorem}{Theorem}
\newtheorem{proposition}[theorem]{Proposition}
\newtheorem{lemma}[theorem]{Lemma} 
\newtheorem{corollary}[theorem]{Corollary}
\theoremstyle{definition}
\newtheorem{definition}[theorem]{Definition}
\newtheorem{remark}[theorem]{Remark}
\newtheorem{example}[theorem]{Example}
\newcommand{\Z}{\mathbb Z}
\newcommand{\N}{\mathbb N}
\newcommand{\ii}{{\mathrm{i}}}
\newcommand{\ee}{{\mathrm{e}}}
\begin{document}

\title[Torsion-free $S$-adic shifts and their spectrum]{Torsion-free $S$-adic shifts and their spectrum}

\author[\'A. Bustos-Gajardo]{\'Alvaro Bustos-Gajardo}
\address{
School of Mathematics and Statistics \\
The Open University\\
Walton Hall, Kents Hill, Milton Keynes\\
MK7 6AA, UK \\
\& Facultad de Matem\'aticas \\ 
Pontificia Universidad Cat\'olica de Chile\\
Edificio Rolando Chuaqui, Campus San Joaquín\\
Avda. Vicuña Mackenna 4860\\
Macul, Santiago, Chile
}
\email{alvaro.bustos-gajardo@open.ac.uk}

\author[N. Ma\~nibo]{Neil Ma\~nibo}
\address{
School of Mathematics and Statistics \\  
The Open University\\
Walton Hall, Kents Hill, Milton Keynes\\
MK7 6AA, UK \\ \&
Faculty of Mathematics\\
Bielefeld University\\
Postfach 10 01 31\\
33501, Bielefeld, Germany}
\email{cmanibo@math.uni-bielefeld.de}

\author[R. Yassawi]{Reem Yassawi}
\address{
School of Mathematics and Statistics  \\ 
The Open University\\
Walton Hall, Kents Hill, Milton Keynes\\
MK7 6AA, UK \\ \&
School of Mathematical Sciences, Queen Mary University of London\\
Mile End Rd, London E1 4NS}
\email{r.yassawi@qmul.ac.uk}

\date{}

\begin{abstract}
In this work we study $S$-adic shifts generated by sequences of morphisms that are constant-length.
	We call a sequence of constant-length morphisms  {\em torsion-free} if any prime divisor of one of the lengths is a divisor of infinitely many of the lengths.
	We show that torsion-free directive sequences generate shifts that enjoy the property of {\em quasi-recognizability} which can be used as a substitute for recognizability. Indeed quasi-recognizable directive sequences can be replaced by a rec\-og\-niz\-a\-ble directive sequence. With this, we give a finer description of the spectrum of shifts generated by torsion-free sequences defined on a sequence of  alphabets of bounded size, in terms of extensions of the notions of {\em height} and {\em column number}. We illustrate our results throughout with examples that explain the subtleties that can arise.
\end{abstract}

\subjclass[2020]{Primary {37B10, 37B52}; Secondary {37A05}}

\keywords{$S$-adic shifts, maximal equicontinuous factors, recognizability}

\maketitle

\section{Introduction}
In \cite{Dekking1977}, Dekking completed the works of  Martin \cite{martin-1971} and Kamae \cite{kamae-1972} to give a description of the spectrum of constant-length substitution shifts. These dynamical systems are defined by  one substitution $\theta\colon \mathcal A\rightarrow \mathcal A^{+}$ which is iterated repeatedly to generate a language and so a shift space. If we move away from this stationary setting, and instead  iterate a given  {\em directive} sequence $(\theta^{(n)})$ of morphisms  
	with $\theta^{(n)}\colon \mathcal A_{n+1}\rightarrow \mathcal A_n^{+}$,  we generate {\em $S$-adic shifts}; see Section  \ref{main_S_adic} for some background.
 In this article we study the spectrum of $S$-adic shifts generated by a sequence of constant-length morphisms defined on alphabets $(\mathcal A_n)$ of bounded size. We call  the cor\-re\-spond\-ing shifts constant-length $S$-adic shifts. Note that families of these shifts have been extensively studied. To begin with, they generalise constant-length substitution shifts, for which a huge literature exists. Also, {\em Toeplitz} shifts \cite{Williams, GjerdeJohansen2000} are often $S$-adic shifts generated by constant-length directive sequences. Finally, the constant-length property is very useful in generating interesting and tractable examples, such as  in \cite[Section 2]{Durand-2003} and \cite[Example 4.3]{BSTY-2019}.
	
	$S$-adic shifts have been studied, in one guise or another, over the last several decades. With various restrictions they can be interpreted as sym\-bol\-ic versions of cutting-and-stacking trans\-for\-ma\-tions of the unit interval, or Bratteli--Vershik dynamical systems. One important restriction is that of {\em recognizability}  (Definition  \ref{def:recog}), a property of the given directive sequence which gives some geometric structure to the symbolic space on which the dynamics operates. Although there are important families of recognizable directive  sequences,
recognizability is a property that is generally difficult to guarantee. 

On studying the results in  \cite{Dekking1977}, it is interesting to  note that in fact they do not entirely depend on recognizability. Enough geometric structure is obtained from the existence of a factor map onto a group rotation. To be precise, this group rotation is accessible, and non-trivial, because constant-length substitution shifts have an abundance of rational eigenvalues. This property extends to our setting. We define the strictly weaker notion of {\em quasi-recognizability} (Definition \ref{def:quasi-recognisability}), which requires the existence of a factor map to a rotation defined by the lengths of the morphisms, and we  show that it is  sufficient to deduce a good quantity of spectral information about our dynamical system. 
  If the morphisms we consider are injective on letters, then a quasi-recognizable directive sequence is recognizable (Lemma \ref {quasi-almost-recog}), but  Example \ref{ex:quasi_recognizable_but_not_recognizable} tells us  that quasi-recognizable shifts are not always recognizable. We will see below that this is not a serious obstacle.
  Next, 
   we  identify the large family of {\em torsion-free} directive sequences $(\theta^{(n)})$, namely, if $p$ divides the length of some $\theta^{(k)}$, then it must divide  the length of   $\theta^{(n)}$ infinitely often. We show in Theorem \ref{thm:dekking-sadic} that torsion-free sequences are quasi-recognizable. 
   We also include Remark \ref{rem:non-bounded}, which indicates that our results can be extended to a larger family of shifts that are defined on sequences of  alphabets of non-bounded cardinality.
 The example in \cite{BSTY-2019}, of a constant-length $S$-adic shift which is not recognizable, is not torsion-free; see Example~\ref{ex:quasirec-under-factor}. We phrase many of our results for  the class of torsion-free directive sequences;  however we note that they often hold for a larger class of quasi-recognizable directive sequences.

In  Theorem  \ref{thm: mef-c-to-one} we show that if a shift is  a somewhere finite-to-one extension of an odometer, then its continuous eigenvalues must all be rational. We  apply this result in Corollary \ref{cor:MEF-torsion-free}, to show that  torsion-free $S$-adic shifts only have ra\-tion\-al eigenvalues, and we can quantify them quite precisely. In addition to the natural eigenvalues that come from  the lengths of the morphisms, there is essentially one other eigenvalue. As in the substitutional case, we call this the {\em height}. In other words, the maximal equicontinuous factor of a torsion-free $S$-adic shift is a group extension of the tiling factor by a cyclic group, and furthermore  we see in Theorem \ref{thm:fully-essential-constant-coboundarybis} that this finite cyclic group is arithmetically orthogonal to the tile lengths in a strong sense.  As a result we have an $S$-adic version of Cobham's theorem in Theorem \ref{cor:cobham}, where we show that a torsion-free $S$-adic shift  cannot be generated by two  sequences of morphisms whose lengths are not compatible.

Of particular interest are torsion-free constant-length shifts whose max\-i\-mal equicontinuous factor space is a torsion-free group: These are the  torsion-free shifts whose height is trivial. As for constant-length substitutions, we show in Theorem \ref{thm:height} that a torsion-free constant-length shift has a {\em pure base}, in that it is a constant height suspension  over another constant-length $S$-adic shift which has trivial height and which has the same sequence of  morphism lengths. Thus we see that the family of shifts generated by  torsion free $S$-adic shifts is quite robust; this robustness recurs in results we describe below.  Also,  this family is easily seen to be closed under the taking of factors (Corollary \ref{cor:factor-quasi}). However, quasi-recognizable shifts are not closed under factoring; see Example \ref{ex:quasirec-under-factor}.

Until this point, quasi-recognizability is sufficient for our needs. To com\-bi\-na\-to\-ri\-al\-ly characterise the height, or to define the notion of a {\em column number}, which gives us further spectral in\-for\-ma\-tion, we need more than quasi-recognizability. But
it turns out that from quasi-recognizability we can manufacture recognizability. Precisely, given a quasi-recognizable constant-length shift $(X_{\boldsymbol \theta},\sigma)$, generated by the directive sequence ${\boldsymbol \theta}$, we can modify ${\boldsymbol \theta}$  to obtain a recognizable directive sequence  $\boldsymbol {\widehat \theta}$ such that $X_{\boldsymbol \theta}=X_{\boldsymbol {\widehat\theta}}$
 (Theorem \ref{thm:sadic-recognizable}). Here also, the sequence of morphism lengths is not changed. Therefore  whenever we need to, we can  assume  that a quasi-recognizable $S$-adic shift is generated by a recognizable directive sequence.

 Thus we can work with a  recognizable representation of a torsion-free shift, which is available to us by Theorem \ref{thm:sadic-recognizable}.
 Recall that the height of a constant-length substitution can also be characterised combinatorially: it is the largest number that is coprime to the length of the substitution, and which divides the greatest common divisor of the return times to a letter in the alphabet. 
  We define in Section \ref{sec:combinatorial-height} a combinatorial height for torsion-free directive sequences, and we show in Theorem \ref{thm:combinatorial-height} that it equals the height.
 We also completely extend the notion of a {\em column number}. The column number of an $S$-adic shift gives us important spectral information, in particular whether  it is almost automorphic in Corollary \ref{cor:odom-factor-k},
 and whether the shift has discrete or mixed spectrum in Propositions \ref{prop:discrete-spec} and \ref{prop:mixed-spec}.

	In  Section \ref{sec:preliminaries}, we set up notation and define background concepts. In Section \ref{sec:recog}, we introduce the property of quasi-recognizability, relate it to recognizability, and show that quasi-recognizability is enjoyed by a large class of the $S$-adic shifts that we study, and also that they have a recognizable representation. In Sections \ref{sec:height}  and \ref{sec:combinatorial-height} we formulate equivalent notions of height, and show that constant-length $S$-adic systems with non-trivial height are  suspensions over a pure base system with trivial height. In Section \ref{sec:column-number}, we define the notion of a column number, which then enables us to give a  finer description of the spectrum of a constant-length $S$-adic shift in Section \ref{sec:spectrum}.

\section{Preliminaries}\label{sec:preliminaries}
	Given a sequence $(q_n)_{n\geq 0}$ of natural numbers,
we work with the product of sets \[\Z_{(q_n)} \coloneqq \prod_{n} \Z/q_n\Z,\] which can be endowed with  the group operation  given by coordinate-wise addition with carry.
For a detailed exposition of equivalent definitions of $\Z_{(q_n)}$, we refer the reader to \cite{Downarowicz2005}. 
We write elements $(z_n)$ of $\Z_{(q_n)}$ as left-infinite sequences $\cdots z_2z_1$, where $z_n\in \Z/q_n\Z$,
so that addition in $\Z_{(q_n)}$ has the carries 
propagating to the left as is usual in $\Z$. If $(q_n)$ is the constant sequence $q_n=q$, then  $\Z_{(q_n)}= \Z_q$ is the classical ring of $q$-adic integers. 
We can also define $\Z_{(q_n)}$ as the inverse limit  $\Z_{(q_n)} =  \varprojlim \Z/p_n\Z $ of cyclic groups, where $p_0=1$ and $p_n\coloneqq  q_0\cdots q_{n-1}$. 
The integers can be injected into  $\Z_{(q_n)}$, via $1= (\cdots, 0,0,1)$, and $-1 = (\cdots, q_3-1, q_2-1,q_1-1)$.
Note that if almost all $q_n=1$, then $\Z_{(q_n)}$ is finite. For this reason, in this article we will assume that $q_n\geq 2$ infinitely often; given such a sequence we can always multiply consecutive terms of the sequence  to obtain $q_n\geq 2$ for each $n$.

Endowed with the product topology over the discrete topology on each $\Z/q_n\Z$, the group $\Z_{(q_n)}$ is a compact metrisable  topological group, 
where the unit 1, as defined in the previous paragraph, is a topological generator.
This topology is metrisable: two points $z, z' \in \Z_{(q_n)}$ are within $\varepsilon$ if they agree on a large initial portion, i.e. $z_i = z_i'$ for $1\leq i \leq n$, for $n=n(\varepsilon)$. With the above notation, an {\em odometer} is a dynamical system $( Z, +1)$ where 
 $ Z=\Z_{(q_n)}$ for some sequence $(q_n)$. 

 Let $\mathcal{A}$ be a finite set of symbols, also called an {\it alphabet}, and let $\mathcal{A}^\mathbb{Z}$ denote the set of  two-sided infinite sequences over $\mathcal{A}$. 
 Similar to the topology we define on  $\Z_{(q_n)}$, we equip  $\mathcal{A}^\mathbb{Z}$ with the metrisable product topology.
 In this work, we consider shift dynamical systems, or {\em shifts} $(X,\sigma)$, where $X$ is a closed $\sigma$-invariant set of $\mathcal A^\mathbb Z$ and $\sigma\colon \mathcal A^\mathbb Z\to \mathcal A^\mathbb Z$ is the left shift map  $(x_n)_{n\in\mathbb{Z}} \mapsto (x_{n+1})_{n\in\mathbb{Z}}$. 
    We use letters $x,y,$ etc, to denote points in the  two-sided shift space $\mathcal{A}^\mathbb{Z}$.   In some cases we will discuss  non-invertible, or one-sided, shifts  $(\tilde{X},\sigma)$, where $\tilde{X}\subset \mathcal{A}^\mathbb{N}$  stands for the set of one-sided,  or right-infinite sequences over $\mathcal A$.   For $\mathbb L=\mathbb N $ or $\mathbb Z$, we use $[w_0\cdots w_n]$  to denote the cylinder set $\{      x\in \mathcal A^\mathbb L : x_0\cdots x_n = w_0\cdots w_n\}.$ A  shift is {\em minimal} if it  has no non-trivial closed shift-invariant subsets.   
    We say that $x \in \mathcal{A}^\mathbb{L}$ is \emph{periodic} if $\sigma^k(x) = x$ for some $k \ge 1$, \emph{aperiodic} otherwise.
The~shift $(X,\sigma)$ is said to be \emph{aperiodic} if each $x \in X$ is aperiodic.   For basics on
  continuous and measurable dynamics see Walters \cite{Walters}.

  Given a finite alphabet  $\mathcal{A}$, let  $\mathcal{A}^*$ be the free monoid of all (finite) words over $\mathcal{A}$ under the operation of concatenation, and let $\mathcal{A}^+$  be the set of all non-empty words over $\mathcal{A}$. We let $\lvert w\rvert$ denote   the length of a finite word~$w$ and  let $\lvert\mathcal A\rvert$ denote the cardinality of  the set $\mathcal A$. A {\em subword}   of  a word or a sequence $x$  is    a finite word $x_{[i,j)}$, $i \le j$, with $x_{[i,j)} \coloneqq  x_i x_{i+1}\cdots x_{j-1}$. 
   A {\em language} is a collection of words in $\mathcal{A}^*$. 
The \emph{language~$\mathcal{L}_x$} of $x = (x_n)_{n\in \mathbb Z} \in \mathcal{A}^\mathbb{Z}$ is the set of all its subwords. 
The language~$\mathcal{L}_X$ of a one- or two-sided shift $(X,\sigma)$ is the union of the languages of all $x \in X$; it is closed under the taking of subwords and every word in $\mathcal{L}_X$ is left- and right-extendable  to a word in $\mathcal{L}_X$. 
Conversely,  a language~$\mathcal{L}$ on~$\mathcal{A}$ which is closed under the taking of subwords and such that each word is both left- and right-extendable defines a one- or two-sided shift $(X_\mathcal{L}, \sigma)$, where $X_\mathcal{L}$ consists of   the set of points all of whose subwords belong to~$\mathcal{L}$, so that $\mathcal L_{X_\mathcal{L}} = \mathcal L$.   
The two-sided shift defined by $\mathcal L$ is the {\em natural extension} of the one-sided shift defined by $\mathcal L$.

Let $\mathcal A$ and $\mathcal B$ be finite alphabets, and let  $\theta\colon \mathcal{A}\to \mathcal{B}^+$ be a map; it extends to a  morphism $\theta\colon \mathcal{A}^*\to \mathcal{B}^*$, also  called a \emph{substitution} if $\mathcal A=\mathcal B$. Note that we assume that  the image of any letter is a non-empty word.  Using concatenation, we extend $\theta$ to act on
 ~$\mathcal{A}^{+}$,
 ~$\mathcal{A}^\mathbb{N}$ and~$\mathcal{A}^\mathbb{Z}$.  If $\theta$ is a substitution, then the finiteness of $\mathcal A$ guarantees that  $\theta$-periodic points, i.e., points $x$ such that $\theta^k(x)=x$ for some  positive $k$, exist.
The \emph{incidence matrix} of the morphism $\theta$ is the $\lvert\mathcal{B}\rvert \times \lvert\mathcal{A}\rvert$ matrix~$M_\theta=(m_{ij})$ with $m_{ij}$ being the number of occurrences of~$i$ in $\theta(j)$.    A substitution is  \emph{primitive} if its incidence matrix admits  a  power  with positive entries. 
Given a substitution $\theta\colon \mathcal{A} \rightarrow \mathcal{A}^+$, the  language   $\mathcal{L}_{\theta}$    defined by  $\theta$ is
\[
\mathcal{L}_{\theta} = \big\{w \in \mathcal{A}^*:\, \mbox{$w$ is a subword of $\theta^n(a)$ for some  $a\in\mathcal{A}$ and $n\in \mathbb N$}\big\}.
\]
If $\theta$ is primitive then each word in $\mathcal{L}_{\theta}$ is left- and right-extendable, and $\mathcal{L}_{\theta}$ is closed under the taking of subwords, so 
$X_{\theta}\coloneqq X_{\mathcal L_\theta}$.
We call $(X_\theta, \sigma)$ a \emph{substitution shift}. 
The substitution $\theta$ is \emph{aperiodic} if $(X_\theta,\sigma)$ has no shift-periodic points, and $\theta$
is a  (constant-)length $\ell$ substitution if
 $\lvert\theta(a)\rvert=\ell$ for each $a\in \mathcal A$.

\subsection{$S$-adic shifts } \label{main_S_adic}
We  recall basic definitions concerning $S$-adic shifts. They 
are obtained by replacing the iteration of a single substitution by the iteration of a sequence of  morphisms.
  In this article, we restrict to the case of an $S$-adic shift defined over a sequence of alphabets $\mathcal A_n$ of bounded cardinality, but see Remark~\ref{rem:non-bounded} below. 

Let $\boldsymbol{\theta} = (\theta^{(n)})_{n\ge0}$ be a sequence of morphisms with $\theta^{(n)}\colon \mathcal{A}_{n+1}\to \mathcal{A}_n^+$; we call   
 $\boldsymbol{\theta}$ a {\em directive sequence}. In this article, we assume that we work with a {\em  sequence of alphabets of bounded size}, i.e., the sequence $(\lvert\mathcal A_n\rvert)_{n\geq 0}$ is bounded. As proofs of results for directive sequences defined on a sequence of  alphabets of bounded size are a notational modification of proofs where $\mathcal A=\mathcal A_n$ for each $n$, we often work with a fixed alphabet, and we lose no generality in our statements.

 For $N\geq 1$ and $0\leq n<N$, let 
\[
\theta^{[n,N)} = \theta^{(n)} \circ \theta^{(n+1)} \circ \cdots \circ \theta^{(N-1)},
\]
 we shall call a word of the form $\theta^{[0,n)}(a)$, for some $a\in\mathcal{A}_n$, an $n$-\emph{supertile}, or an $n$-\emph{th order supertile}.  Note that every $n$-supertile is a concatenation of $\frac{p_{n}}{p_{m}}$ $m$-supertiles whenever $1\leq m<n$. Similarly, any element from $X_{\boldsymbol{ \theta}}$ may be seen as an infinite concatenation of $n$-supertiles, up to a shift.

For $n\geq 0$, define  $\bar{\mathcal{L}}^{(n)}=     \bar{\mathcal{L}}_{\boldsymbol{\theta}}^{(n)}$ as 
\begin{align*}
\bar{\mathcal{L}}^{(n)}& = \big\{w \in \mathcal{A}_n^*:\, \mbox{$w$ is a subword of $\theta^{[n,N)}(a)$ for some $a \in\mathcal{A}_N$, and $N>n$}\big\}.
\end{align*}

 Define
\[ X^{(n)}=X^{(n)}_{\boldsymbol{\theta}}\coloneqq \{x\in \mathcal A_n^{\Z}: \mbox{ for each $k\leq \ell, x_{[k,\ell)}\in \bar{\mathcal{L}}_{\boldsymbol{\theta}}^{(n)}$ } \}\]
and note that $  \mathcal{L}^{(n)}      \coloneqq  \{ w:  w \mbox{ appears as some subword in some } x\in X^{(n)}    \}$
 is generally a proper subset of   $\bar{\mathcal{L}}^{(n)}$. 
 We call  $(\mathcal{L}^{(n)})_{n\geq 0}$ the languages associated to~$\boldsymbol{\theta}$. In some situations, see for example Proposition \ref{prop:two-one-sided-continuous}, it will be more convenient to work with the one sided shift
 \[ \tilde{X}^{(n)}=\tilde{X}^{(n)}_{\boldsymbol{\theta}}\coloneqq \{x\in \mathcal A_n^{\N}: \mbox{ for each $0\leq k\leq \ell, x_{[k,\ell)}\in \bar{\mathcal{L}}_{\boldsymbol{\theta}}^{(n)}$ } \}.\]

We say that ~$\boldsymbol{\theta}$ is \emph{primitive} if for each $n\ge0$ there is an $N>n$ such that  the incidence matrix $M_{[n,N)}\coloneqq  M_{\theta^{(n)}}M_{\theta^{(n+1)}}\cdots M_{\theta^{(N-1)}}$ of $\theta^{[n,N)}$ is a positive matrix. Under the assumption of primitivity for $\boldsymbol{\theta}$,
each word in $ \mathcal{L}^{(n)}$ is left- and right-extendable. 
 If $\boldsymbol{\theta}$ is primitive, then each $(X^{(n)},\sigma)$ is minimal for all~$n$
 \cite[Lemma~5.2]{Berthe-Delecroix}.  We only work with  directive sequences such that $(X^{(n)},\sigma)$ is minimal for all~$n$  in this article.

We set $X_{\boldsymbol{\theta}} = X_{\boldsymbol{\theta}}^{(0)}$ and call $(X_{\boldsymbol{\theta}},\sigma)$ the \emph{$S$-adic shift} generated by the directive sequence ~$\boldsymbol{\theta}$.

\begin{definition}[Constant-length directive sequences]\label{def:directive-families} Let $\boldsymbol{\theta}= (\theta^{(n)})_{n\geq 0}$ be a directive sequence. We say that    $\boldsymbol{\theta}$ is 
a {\em constant-length} directive sequence on $(\mathcal A_n)$, if each $\theta^{(n)}$ is a constant-length morphism. We say that ${\boldsymbol \theta}$  has {\em length sequence} $(q_n)_{n\geq 0}$ if for each $n$, $\theta^{(n)}$ is of length $q_n$.
\end{definition}

Note that there are no constraints on the  sequence $(q_n)$ and it  is  not assumed  constant or bounded.

\subsection{Eigenvalues and equicontinuous factors}\label{sec:spectrum-background}

Let $\boldsymbol{\theta}$ be a constant-length directive sequence with shift $(X_{\boldsymbol{\theta}},\sigma)$. 
A complex number $\lambda\in S^1$ is a {\em continuous eigenvalue} if there exists a continuous function $f\colon X_{\boldsymbol{\theta}}\rightarrow\mathbb C$ with $f\circ \sigma= \lambda f$.
We abuse terminology and say that  $\lambda$ is a {\em rational} eigenvalue if  $\lambda= \ee^{2 \pi \ii p/q} $ is a root of unity.

Let $\mu$ be a $\sigma$-invariant Borel probability measure on $X_{\boldsymbol{\theta}}$ and consider the Hilbert space $L^{2}(X,\mu)$. We say that $\lambda$ is a {\em measurable eigenvalue}  if there exists a non-zero $f\in L^{2}(X,\mu)$ such that $f\circ \sigma=\lambda f$. In general, not all measurable eigenvalues admit continuous eigenfunctions.

The following proposition, with a  proof in \cite{BCY-2022}, will be convenient for us, sometimes allowing us to work with one-sided shifts where  our arguments have less cumbersome notation.
\begin{proposition}\label{prop:two-one-sided-continuous}
Let $(\tilde{X},\tilde{\sigma})$ be a one-sided minimal  shift, and  let $(X,\sigma)$ be its natural extension. Then
$\lambda$ is a continuous eigenvalue for  $(\tilde{X},\tilde{\sigma})$ if and only if $\lambda$ is a continuous eigenvalue for $(X,\sigma)$.
  If  $\tilde{\mu}$ is a shift invariant measure on $(\tilde{X},\tilde{\sigma})$ and  $\mu$ is the corresponding measure on $(X,\sigma)$, then   $\lambda$ is a measurable eigenvalue  for   $(\tilde{X},T, \tilde{\sigma})$ if and only $\lambda$ is a measurable eigenvalue for  $(X,\sigma, \mu)$.
\end{proposition}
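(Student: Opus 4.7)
The proof rests on the canonical continuous surjection
\[
  \pi\colon X \to \tilde X, \qquad (x_i)_{i \in \mathbb Z} \mapsto (x_i)_{i \geq 0},
\]
which intertwines the shifts ($\pi \circ \sigma = \tilde\sigma \circ \pi$) and, by construction of the natural extension, satisfies $\pi_*\mu = \tilde\mu$. One direction is a straightforward pullback: if $\tilde f$ is a (continuous or $L^2$) $\tilde\sigma$-eigenfunction with eigenvalue $\lambda$, then $\tilde f \circ \pi$ is a (continuous or $L^2$) $\sigma$-eigenfunction with the same eigenvalue, continuity being preserved by continuity of $\pi$ and $L^2$-membership by $\pi_*\mu = \tilde\mu$.

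For the reverse direction in the continuous case, I would show that any continuous eigenfunction $f\colon X \to \mathbb C$ is constant on each fiber of $\pi$, hence descends to a continuous function $\tilde f$ on $\tilde X$. If $\pi(x) = \pi(y)$ then $x_i = y_i$ for $i \geq 0$, so $\sigma^n(x)$ and $\sigma^n(y)$ agree on coordinates $i \geq -n$ and therefore converge together in the product topology. Uniform continuity of $f$ on the compact set $X$ gives $|f(\sigma^n x) - f(\sigma^n y)| \to 0$, while the eigenvalue equation combined with $|\lambda| = 1$ forces $|f(\sigma^n x) - f(\sigma^n y)| = |f(x) - f(y)|$; thus $f(x) = f(y)$. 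The resulting factorisation $f = \tilde f \circ \pi$ is well-defined, and $\tilde f$ is continuous because $\pi$ is a quotient map between compact Hausdorff spaces; shift-equivariance of $\pi$ then yields $\tilde f \circ \tilde\sigma = \lambda \tilde f$.

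For the measurable direction, fiberwise pointwise arguments are unavailable. Instead, I would work at the level of Koopman operators: $\iota(\tilde g) \coloneqq \tilde g \circ \pi$ embeds $L^2(\tilde X, \tilde\mu)$ isometrically into $L^2(X, \mu)$ and intertwines the Koopman isometry $U_{\tilde\sigma}$ with the Koopman unitary $U_\sigma$. By construction of the natural extension, $(L^2(X, \mu), U_\sigma)$ is the minimal unitary dilation of $(L^2(\tilde X, \tilde\mu), U_{\tilde\sigma})$. Applying Wold's decomposition, $L^2(\tilde X, \tilde\mu) = H_u \oplus H_s$ with $U_{\tilde\sigma}|_{H_u}$ unitary and $U_{\tilde\sigma}|_{H_s}$ a unilateral shift; the dilation leaves $H_u$ unchanged and extends $H_s$ to a bilateral shift. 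Since unilateral and bilateral shifts have empty point spectrum, every $L^2$-eigenfunction of $U_\sigma$ lies in $H_u \subset \iota L^2(\tilde X, \tilde\mu)$ and descends to an $L^2$-eigenfunction of $U_{\tilde\sigma}$ with the same eigenvalue.

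The main obstacle is precisely this measurable direction: identifying $(L^2(X, \mu), U_\sigma)$ abstractly with the minimal unitary dilation of $(L^2(\tilde X, \tilde\mu), U_{\tilde\sigma})$ requires care, because the non-invertibility of $\tilde\sigma$ means that the forward sigma-algebra $\pi^{-1}(\mathcal B(\tilde X))$ is not simultaneously $\sigma$- and $\sigma^{-1}$-invariant. An alternative route would be a direct martingale argument along the filtration $\mathcal F_n \coloneqq \sigma^n\pi^{-1}(\mathcal B(\tilde X))$, using the eigenvalue relation to force $E[f \mid \mathcal F_0] = f$ in $L^2$; but either route ultimately relies on the same structural fact that a natural extension preserves point spectrum.
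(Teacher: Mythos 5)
The paper cites \cite{BCY-2022} for the proof rather than giving it, so a side-by-side comparison with the paper's own argument is not possible; but your proposal is, taken on its own merits, correct. For the continuous direction your argument is exactly the standard one: if $\pi(x)=\pi(y)$, the forward orbits $\sigma^n x$ and $\sigma^n y$ converge together in the product topology, while $|\lambda|=1$ forces $|f(\sigma^n x)-f(\sigma^n y)|=|f(x)-f(y)|$ to be constant; uniform continuity then gives $f(x)=f(y)$, and the closed-map/quotient property of $\pi$ makes the descended function continuous. That step has no gaps.

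For the measurable direction, the Wold/minimal-dilation route you sketch is sound, and the worry you raise about rigorously identifying $(L^2(X,\mu),U_\sigma)$ as the minimal unitary dilation of $(L^2(\tilde X,\tilde\mu),U_{\tilde\sigma})$ is easy to discharge: setting $\mathcal F_n \coloneqq \sigma^n\pi^{-1}\mathcal B(\tilde X)$, the sub-$\sigma$-algebras $\mathcal F_n$ increase to $\mathcal B(X)$, and this is exactly the minimality condition $L^2(X,\mu) = \overline{\bigcup_n U_\sigma^{-n}\,\iota L^2(\tilde X,\tilde\mu)}$. Once that is in place, an eigenvector of $U_\sigma$ cannot have a component in the bilateral-shift summand (empty point spectrum), so it lies in $H_u\subset\iota L^2(\tilde X)$ and descends, as you say. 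The martingale alternative you mention is the more elementary way to package the same fact and is worth filling in, since it avoids invoking dilation theory altogether: from $f\circ\sigma=\lambda f$ one shows $E[f\mid\mathcal F_n] = \lambda^n\,U_\sigma^{-n}E[f\mid\mathcal F_0]$, so $\|E[f\mid\mathcal F_n]\|_2 = \|E[f\mid\mathcal F_0]\|_2$ for all $n$; martingale convergence gives $\|E[f\mid\mathcal F_n]\|_2\to\|f\|_2$, whence $\|E[f\mid\mathcal F_0]\|_2 = \|f\|_2$ and therefore $f=E[f\mid\mathcal F_0]$ is $\pi^{-1}\mathcal B(\tilde X)$-measurable. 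Either route is fine; the dilation formalism buys a cleaner structural statement (natural extensions preserve the unitary part of the Koopman operator) at the cost of some operator-theoretic overhead, while the martingale argument is self-contained and is the version more commonly written out in the ergodic-theory literature for exactly this proposition.
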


A topological dynamical system  $(Z,S)$ is called {\em equicontinuous} if the family $\{S^n : n\in\Z\}$ is equicontinuous. 
A minimal equicontinuous system $(Z,S)$  must be a  rotation on a compact monothetic topological group, i.e., there exists an element $g\in Z$ such that the subgroup generated by $g$ is dense,  and the homeomorphism  is  $S(z) = z+g$. Such a group is always abelian  and we will write the group operation additively.

If $(Z,+g)$ is equicontinuous and there is a  factor map $\pi \colon(X,\sigma)\rightarrow (Z,+g)$, we say that $(Z,+g)$ is an equicontinuous factor of $(X,\sigma)$. Any $\Z$-action $(X,\sigma)$ admits a maximal equicontinuous factor $\pi\colon(X,\sigma)\to (Z,+g)$.  
This equicontinuous factor must be maximal, i.e.,  any other  equicontinuous factor of $(X,\sigma)$ factors through $(Z,+g)$. The maximal equicontinuous factor encodes all continuous eigenvalues of $(X,\sigma)$. There are two ways to see this, both of which we will use and both of which are described in \cite{ABKL-2015}. First,  the maximal equicontinuous factor $(Z,+g)$ encodes the continuous eigenvalues of $X$ in the sense that  the  Pontryagin dual $\hat{Z}$ of $Z$ may be interpreted as the subgroup of $S^1$ generated by all continuous eigenvalues of $(X,\sigma)$.
Second, we can  put an equivalence relation on $X$ where $x\sim y$ if and only if $f(x)=f(y)$ for every continuous eigenfunction $f$. Then it can be shown that $\sigma$ induces an equicontinuous map on $X/{\sim}$, and that each continuous eigenfunction on $X$ translates to an eigenfunction on $X/{\sim}$.
The shifts that we study in this article  only have rational continuous eigenvalues; see Corollary \ref{cor:MEF-torsion-free}.  In this case the relation $\sim$ equals $\Lambda\coloneqq  \bigcap_{n\geq 1} \Lambda_n$, where $\Lambda_n$ is  defined and used in Section \ref{sec:recog}.

\subsection{Limit words}

\begin{definition} [Limit word]\label{def:limit-word} Let $\boldsymbol{\theta}$ be a directive sequence.
We say that $u=u^{(0)}\in X_{\boldsymbol{\theta}}$ is a (two-sided) {\em limit word} if there exists a sequence $(u^{(n)})_{n\geq 0}$
with $u^{(n)}\in X^{(n)}$, and  $\theta^{(n)}(u^{(n+1)})= u^{(n)}$ for each $n$.
Similarly, 
 $u=u^{(0)}\in \tilde{X}_{\boldsymbol{\theta}}$ is a  (one-sided) {\em limit word} if there exists a sequence $(u^{(n)})_{n\geq 0}$
with $u^{(n)}\in \tilde{X}^{(n)}$, and  $\theta^{(n)}(u^{(n+1)})= u^{(n)}$ for each $n$.
 \end{definition}

We say that a word $w$   is {\em essential} for the directive sequence $ \boldsymbol{ \theta} $ if it occurs in $\mathcal{L}^{(n)}$ for  infinitely many $n$. An essential word is {\em fully essential} if it occurs in  $\mathcal{L}^{(n)}$  for each $n$.
{\em Telescoping} a directive sequence $(\theta^{(n)})_{n\geq 0}$ means taking a sequence $(n_k)_{k\geq 1}$ and considering instead the directive sequence $(\tilde{\theta}^{(k)})$ where $\tilde {\theta}^{(0)} = \theta^{[0,n_1)}$ and $\tilde{\theta}^{(k)} = \theta^{[n_{k},n_{k+1})}$ for $k\geq 1$. Telescoping a directive sequence does not change the dynamics, i.e.,    $X_{\boldsymbol{\theta}} =  X_{\boldsymbol{\tilde\theta}}$.  However telescoping can change the nature of the directive sequence, for example, a  directive sequence can be {\em finitary}, i.e., it is  chosen from a finite set of morphisms, but it has telescopings that are not finitary. This was a concern in \cite{BCY-2022}, where finitary directive sequences were studied. In this article, we do not need to assume that our directive sequences are finitary, nor do we put any constraints on the morphisms in $\boldsymbol {\theta}$, only that they are defined on alphabets of bounded size. Therefore we can telescope freely, and if $ab$ is essential, we can assume that it is fully essential, i.e., that  it appears in $\mathcal L^{(n)}$ for each $n$.

We  link our definition of a limit word to that in \cite{BCY-2022}. There,
a two-sided limit word is defined as 
 $u\coloneqq  \lim_{k} \theta^{[0,n_k)}(a)\cdot  \theta^{[0,n_k)}(b)$ for some essential word $ab$ that belongs to $  \mathcal{L}^{(n_k)}   $ for each $k$.
If the word $ab$ is essential, then as we can assume  that $ab$ is fully essential, we have $u\coloneqq  \lim_{n} \theta^{[0,n)}(a)\cdot  \theta^{[0,n)}(b)$. Recall that our directive sequence is defined on alphabets of bounded size. By further telescoping if needed, we can assume that 
 $\theta^{[0,n)}(b)$  shares a common prefix with  $\theta^{[0,n+1)}(b)$ of  increasing length, and $\theta^{[0,n)}(a)$  also shares a common suffix with $\theta^{[0,n+1)}(a)$ of increasing length. The sequence of finite words $(\theta^{[0,n)} (a)\cdot   \theta^{[0, n)} (b))_{k\geq 1}$ converges to a bi-infinite sequence $u$ in $X_{\boldsymbol{\theta}}$, and for each $k$, the sequence of finite words $(\theta^{[k,n)} (a)\cdot   \theta^{[k, n)} (b))_{k\geq 1}$ converges to $u^{(k)}\in \mathcal L^{(k)}$ and $\theta^{(k)}(u^{(k+1)})=u^{(k)}$. Thus a limit word in the sense of \cite{BCY-2022} is a limit word as in Definition \ref{def:limit-word}. Conversely, if we have a limit word as in Definition \ref{def:limit-word}, we take a sequence $n$ such that  $(u^{(n)}_{-1} u^{(n)}_{0})_n$ is a constant sequence $ab$, and then $\lim_{n} \theta^{[0,n)} (ab)$ converges to the limit word $u$.

\section{Recognizability and quasi-recognizability}\label{sec:recog}
We  first start with   a notion  which expresses the idea of performing a  ``desubstitution''.
\begin{definition}[Dynamic recognizability, $\theta$-representations  and rec\-og\-niz\-a\-ble directive sequences] \label{def:recog}
Let $\theta\colon \mathcal{A} \to \mathcal{B}^+$ be a morphism and $y \in \mathcal{B}^\mathbb{Z}$.
If $y = \sigma^k \theta(x)$ with $x=(x_n)_{n\in \mathbb Z}
\in \mathcal{A}^\mathbb{Z}$, and  $0 \leq k < \lvert\theta(x_0)\rvert$,  then we say that $(k,x)$ is a \emph{(centred) $\theta$-representation} of~$y$. 
For $X \subseteq \mathcal{A}^\mathbb{Z}$, we say that the $\theta$-representation $(k,x)$ \emph{is in~$X$} if $x \in X$.

Given $X \subseteq \mathcal{A}^\mathbb{Z}$ and $\theta\colon \mathcal{A} \to \mathcal{B}^+$, we say that $\theta$ is \emph{recognizable in~$X$} if each $y \in \mathcal{B}^\mathbb{Z}$ has at most one centered $\theta$-representation in~$X$. 
A~directive sequence $\boldsymbol{\theta}$ is \emph{recognizable at level~$n$} if $\theta^{(n)}$ is recognizable in~$X^{(n+1)}$. 
The sequence~$\boldsymbol{\theta}$ is \emph{recognizable} if it is recognizable at level~$n$ for each $n \ge 0$.
\end{definition}

Note that the notion of recognizability of a shift is incompatible with the existence of shift-periodic points in that shift \cite{BSTY-2019}.

\begin{definition}[Quasi-recognizability]\label{def:quasi-recognisability}
Let ${\boldsymbol \theta}$ be a  constant-length directive sequence with length sequence $(q_n)$, where each $(X_{\boldsymbol{\theta}}^{(n)},\sigma)$ is minimal.
  If there is an equicontinuous factor map $\pi_{\rm tile}\colon (X_{\boldsymbol {\theta}}, \sigma) \rightarrow (\Z_{(q_n)}, +1)$ then  we say that 
${\boldsymbol \theta}$ is {\em quasi-recognizable}, and we call $\pi_{\rm tile}$ the {\em tiling factor map}.
\end{definition}

\begin{remark}\label{rem:limit-word-zero}
If  $\pi \colon (X_{\boldsymbol {\theta}}, \sigma) \rightarrow (\Z_{(q_n)}, +1)$ is an  equicontinuous factor map, then there exists a factor map  $\pi_{\rm tile}\colon (X_{\boldsymbol {\theta}}, \sigma) \rightarrow (\Z_{(q_n)}, +1)$  which maps one limit word to zero in $\Z_{(q_n)}$. Hence we will always  assume that $\pi_{\rm tile}$ maps some limit word to 0.
\end{remark}

Recall that if $\boldsymbol{\theta}$ is primitive, then each $(X_{\boldsymbol{\theta}}^{(n)},\sigma)$ is minimal.
We remark that the existence of a (surjective) factor map  $\pi_{\rm tile}\colon (X_{\boldsymbol {\theta}}, \sigma) \rightarrow (\Z_{(q_n)}, +1)$ forces $ (X_{\boldsymbol {\theta}}, \sigma)$ to be infinite. The assumption of  minimality implies that if  $(X_{\boldsymbol {\theta}}, \sigma)$ is quasi-recognizable then
$ X_{\boldsymbol{\theta}}^{(0)}  $ is aperiodic. Throughout this article, we work with quasi-recognizable directive sequences. Hence, we implicitly assume that our shifts contain no shift-periodic points.

In light of Proposition \ref{prop:two-one-sided-continuous}, as we have defined it, the property of being quasi-recognizable is not sensitive to the one- or two-sided setting. In other words,
$(X_{\boldsymbol \theta},\sigma)$ is quasi-recognizable if and only if $(\tilde{X}_{\boldsymbol \theta},\tilde\sigma)$ is quasi-rec\-og\-niz\-a\-ble.
  This is contrary to recognizability, where there are shifts that are two-sided but not one-sided recognizable \cite{Mosse-1992}.

 \begin{lemma}\label{quasi-almost-recog}
Let ${\boldsymbol \theta}$ be a constant-length directive sequence with length se\-quence $(q_n)$, and  such that each $(X_{\boldsymbol{\theta}}^{(n)},\sigma)$ is minimal.
If ${\boldsymbol \theta}$ is recognizable, then it is  quasi-recognizable. Conversely, if ${\boldsymbol \theta}$ is quasi-recognizable and each morphism $\theta^{(n)}$ is injective on letters, then ${\boldsymbol \theta}$ is recognizable.
\end{lemma}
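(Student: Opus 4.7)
The plan is to treat the two implications separately. The forward direction is an explicit construction of the tiling factor map; the converse combines minimality of $X^{(1)}$ with the existence of a limit word $u$ normalised to $\pi_{\rm tile}(u) = 0$ to extract a genuine desubstitution from the bare existence of a factor map.

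For the forward implication, assuming recognizability at every level, for each $y \in X_{\boldsymbol \theta}$ I would iteratively extract the unique centred $\theta^{(n)}$-representation $(k_n(y), x^{(n+1)}(y))$ of $x^{(n)}(y) \in X^{(n)}$, with $x^{(0)}(y) = y$, yielding a sequence $(k_n(y))_{n\geq 0}$ with $0 \le k_n(y) < q_n$. Setting $\pi_{\rm tile}(y) \coloneqq (k_n(y))_{n\geq 0} \in \Z_{(q_n)}$ gives the candidate factor map. Equivariance is a direct check: shifting $y$ by one increments $k_0(y)$ by $1$ modulo $q_0$, with the familiar carry to $k_1(y), k_2(y), \ldots$ when a coordinate wraps, which matches addition of $1$ in the odometer exactly. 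Continuity reduces to continuity of the desubstitution maps $y \mapsto k_n(y)$ and $y \mapsto x^{(n+1)}(y)$; in the bounded-alphabet setting this follows from an $S$-adic analogue of Moss\'e's theorem, or concretely from the fact that $k_n(y)$ is determined by a sufficiently long window around the origin.

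For the converse, fix $\pi_{\rm tile}$ and, by Remark \ref{rem:limit-word-zero}, a limit word $u = \theta^{(0)}(u^{(1)})$ with $\pi_{\rm tile}(u) = 0$. The central step, where the argument has its non-trivial content, is the identity
\[
\bigl(\pi_{\rm tile}(\theta^{(0)}(x))\bigr)_0 = 0 \text{ in } \Z/q_0\Z \qquad \text{for every } x \in X^{(1)}.
\]
To prove this I would write $x = \lim_j \sigma^{n_j} u^{(1)}$, possible by minimality of $X^{(1)}$, and use continuity of $\theta^{(0)}$ together with equivariance of $\pi_{\rm tile}$ to compute
\[
\pi_{\rm tile}\bigl(\theta^{(0)}(x)\bigr) = \lim_j \pi_{\rm tile}\bigl(\sigma^{q_0 n_j} u\bigr) = \lim_j q_0 n_j \in q_0 \Z_{(q_n)},
\]
whose zeroth coordinate vanishes. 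Given this, if $y \in X_{\boldsymbol \theta}$ has two centred $\theta^{(0)}$-representations $(k, x), (k', x') \in \{0, \ldots, q_0-1\} \times X^{(1)}$, then applying $\pi_{\rm tile}$ forces $k \equiv k' \pmod{q_0}$, hence $k = k'$; cancelling $\sigma^k$ gives $\theta^{(0)}(x) = \theta^{(0)}(x')$, and reading off consecutive blocks of length $q_0$ combined with letter-injectivity of $\theta^{(0)}$ yields $x = x'$.

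Recognizability at level $n > 0$ follows once $\pi_{\rm tile}$ is promoted to a factor map $\pi_{\rm tile}^{(n)} \colon X^{(n)} \to \Z_{(q_m)_{m \geq n}}$. Iterating the identity above, for every $x \in X^{(n)}$ the image $\pi_{\rm tile}(\theta^{[0,n)}(x))$ has its first $n$ coordinates equal to zero, so lies in the subgroup $p_n \Z_{(q_n)}$, which is topologically isomorphic to $\Z_{(q_m)_{m\geq n}}$; transporting across this isomorphism gives $\pi_{\rm tile}^{(n)}$, with continuity and equivariance inherited from $\pi_{\rm tile}$ and $\theta^{[0,n)}$. The level-$0$ argument then applies verbatim to the shifted directive sequence $(\theta^{(m)})_{m\geq n}$. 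The main obstacle, and the only non-formal content of the proof, is the displayed identity above: without it no information about the offset $k$ can be extracted from the factor map; minimality of $X^{(1)}$ and the normalisation $\pi_{\rm tile}(u) = 0$ are both used essentially there, and once it is in hand the rest of the argument is bookkeeping.
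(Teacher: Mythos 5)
Your proof is correct and follows essentially the same strategy as the paper's: the forward direction builds $\pi_{\rm tile}$ from the desubstitutions (equivalently, the cyclic supertile partitions) that recognizability provides, and the converse normalises $\pi_{\rm tile}$ so that a limit word maps to $0$ and then uses minimality of $X^{(n+1)}$ to show $\pi_{\rm tile}(\theta^{[0,n)}(x))$ vanishes modulo $p_n$, which pins down the offset $k$ and, together with letter-injectivity, the desubstituted word. You spell out the key computation and the passage to levels $n>0$ (via the isomorphism of $\ker(\Z_{(q_n)}\to\Z/p_n\Z)$ with $\Z_{(q_m)_{m\ge n}}$) more explicitly than the paper, which compresses these steps into the remark that $\pi_{\rm tile}$ locates all supertile boundaries and then says ``the result follows.''
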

\begin{proof} Suppose ${\boldsymbol \theta}$ is recognizable. This direction appears in \cite[Remark 6.1]{BCY-2022} but we include a proof here. For each $n$, consider the $\sigma^{p_n}$-cyclic partition 
\[\left\{\sigma^{j}\left(\bigcup_{a\in \mathcal A_n}\left[\theta^{[0,n)}(a)\right]\right), j=0, \cdots ,p_{n}-1\right\}\] of $X_{\boldsymbol \theta}$. This defines a $\sigma$-tower where we choose the  base to be $\bigcup_{a\in \mathcal A_n}\big[\theta^{[0,n)}(a)\big]$. Define $\pi_n\colon X_{\boldsymbol \theta}\rightarrow \Z/p_n\Z$ to be $\pi_n(x) = j_n$ if and only if $x$ belongs to $ \sigma^{j_n}\left(\bigcup_{a\in \mathcal A_n}\big[\theta^{[0,n)}(a)\big]\right)$. As $j_{n+1}\equiv j_n \pmod{p_n}$, the maps $(\pi_n)$ define a factor map $\pi_{\rm tile}\colon X_{ \boldsymbol \theta     } \rightarrow \Z_{(q_n)}$. Further the assumption on the base of each partition implies that any limit word is mapped by $\pi_{\rm tile}$ to $0$.

Conversely, suppose that we have a factor $\pi_{\rm tile}\colon X_{\boldsymbol \theta}\rightarrow \Z_{(q_n)}$. We can assume, by rotation if needed, that  it  maps  a  limit word to $0$; see  Remark \ref{rem:limit-word-zero}. Fix such a limit word $u$.  Define, for $n\in \N$ and $0\leq j \leq p_n -1$,  $U_{n,j}\coloneqq  \{ x : \pi_{\rm tile}(x)_n = j\}$. We have 
$\sigma^k(u) \in  U_{n,j}$ if and only if $k\equiv j \pmod{p_n}$, and $\sigma^k(u)$ has an $n$-supertile  $\theta^{[0,n)}(a)$ with support $[-j,p_n -j-1]$. By the assumption of minimality of  $X_{\boldsymbol \theta}$, we also have $x\in U_{n,j}$ if and only if $x$ has an $n$-th order supertile   with support $[-j,p_n -j-1]$. \\
In other words, the existence of such a $\pi$ implies that given $x\in X_{\boldsymbol \theta}$, we have complete information about the indices at which an $n$-th 
order supertile begins, for each $n$. The result follows.
\end{proof}

 It remains to give some general condition which guarantees that $\boldsymbol{\theta}$ is quasi-recognizable. Note that a constant-length $S$-adic shift is not necessarily recognizable; see \cite[Example 4.3]{BSTY-2019}. However that example does not satisfy the following definition.
	\begin{definition}[Torsion-free directive sequences]\label{def:torsion-free}
	Let   $\boldsymbol{\theta}$ be a constant-length directive sequence with length sequence $(q_n)$ where $q_n>1$ infinitely often,  where each $(X_{\boldsymbol{\theta}}^{(n)},\sigma)$ is minimal, and such that  $ X_{\boldsymbol{\theta}}^{(0)}  $ is aperiodic.
 If each prime $p$ which divides some $q_k$ divides  $q_n$ infinitely often, then we say that  $\boldsymbol{\theta}$ is {\em torsion-free}.
	\end{definition}

	The following theorem tells us that  the shift generated by a torsion-free $S$-adic shift factors onto a torsion-free odometer. We will see in Corollary 
	\ref{cor:MEF-torsion-free} that this odometer is not always the maximal equicontinuous factor, and that  the latter may have a torsion factor.

	\begin{theorem}\label{thm:dekking-sadic}
	Let $\boldsymbol{\theta}$ be a torsion-free  directive sequence, defined on a  sequence of bounded alphabets.
		Then  ${\boldsymbol \theta}$ is quasi-recognizable.
		\end{theorem}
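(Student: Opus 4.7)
The plan is to realise $\pi_{\rm tile}\colon(X_{\boldsymbol\theta},\sigma)\to(\Z_{(q_n)},+1)$ as the inverse limit of continuous $\sigma$-equivariant maps $\pi_n\colon X_{\boldsymbol\theta}\to\Z/p_n\Z$, compatible under the projections $\Z/p_{n+1}\Z\to\Z/p_n\Z$, with $\pi_n(\sigma x)=\pi_n(x)+1$. I would anchor the construction at a two-sided limit word $u=u^{(0)}$ with desubstitution sequence $(u^{(k)})_{k\ge 0}$, by setting $\pi_n(\sigma^k u):=k\bmod p_n$ on the orbit of $u$; aperiodicity of $(X_{\boldsymbol\theta},\sigma)$ makes this well-defined, while $\sigma$-equivariance and cross-level compatibility are automatic from the definition. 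By minimality the orbit of $u$ is dense in $X_{\boldsymbol\theta}$, so the whole problem reduces to showing this orbital definition extends continuously, which amounts to proving that whenever $\sigma^{k_j}u\to x$ and $\sigma^{k'_j}u\to x$ in $X_{\boldsymbol\theta}$, one has $k_j-k'_j\to 0$ in $\Z_{(q_n)}$.

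Fix $n$ and, after subsequence extraction, assume $k_j\equiv j_1$ and $k'_j\equiv j_2\pmod{p_n}$ with $d:=j_1-j_2\not\equiv 0\pmod{p_n}$. Pick a prime $p\mid p_n$ whose $p$-adic valuation in $d$ is strictly smaller than its $p$-adic valuation in $p_n$. This is where torsion-freeness is essential: since $p\mid q_k$ for some $k$, the torsion-free hypothesis grants $p\mid q_m$ for infinitely many $m$, so there exists $N\ge n$ with the $p$-adic valuation of $p_N$ arbitrarily larger than that of $d$. Now $\sigma^{k_j}u$ and $\sigma^{k'_j}u$ agree with $x$ on longer and longer windows around the origin, and each carries the canonical $N$-supertile tiling inherited from $u$, with boundaries in residues $-k_j$ and $-k'_j$ modulo $p_N$ respectively.

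I would then argue that for $j$ large enough the two competing $N$-tilings of a common window of $x$ must place some $N$-supertile boundary at the same position, at which point the two tilings agree throughout the window; this forces $k_j-k'_j\equiv 0\pmod{p_N}$, contradicting the choice of $N$ together with the fact that the $p$-adic valuation of $k_j-k'_j$ equals that of $d$. The existence of such a shared $N$-supertile boundary is the main obstacle and the combinatorial heart of the argument: I would extract it through a primitivity-plus-pigeonhole count on a window of length much larger than $p_N$, leveraging the bounded-alphabet hypothesis so that only finitely many $N$-supertile types compete. Without the torsion-free assumption one cannot promote the level-$n$ mismatch to a level-$N$ one with strictly larger $p$-adic valuation on the length side, and the inflation-of-ambiguity step collapses, which is precisely what allows the pathology of the non-torsion-free example cited in the excerpt.

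Once each $\pi_n$ is continuous on $X_{\boldsymbol\theta}$, the compatibility identities inherited from the orbit of $u$ pass by density, and the assembled map $\pi_{\rm tile}(x):=(\pi_n(x))_{n\ge 0}$ is the required equicontinuous factor map onto $\Z_{(q_n)}$. Surjectivity is automatic, since the values of $\pi_{\rm tile}$ on $\sigma^{\Z}u$ are already dense in the odometer.
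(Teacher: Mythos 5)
Your general architecture — define $\pi_n$ on the orbit of a limit word by $\pi_n(\sigma^k u) := k \bmod p_n$, then extend by uniform continuity — is a reasonable reformulation of what quasi-recognizability requires. But the argument has a genuine gap exactly where you flag it: the step asserting that two competing $N$-tilings of a long common window of $x$ must share an $N$-supertile boundary. This is not a routine ``primitivity-plus-pigeonhole'' count; it \emph{is} the theorem in disguise, since a shared boundary at level $N$ for all large $N$ is precisely the statement that the position-mod-$p_N$ of the supertile structure is a continuous invariant, i.e.\ quasi-recognizability itself. Bounded alphabet gives at most $|\mathcal A|$ types of $N$-supertile, but two offset tilings of a long word by few tile types need not share a cut: the typical obstruction is that the word is (locally) periodic with a period dividing $p_N$. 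So the pigeonhole you sketch cannot run purely on tile-type counts; one must quantify the ``amount of mismatch'' allowed and show it collapses to periodicity, and that is a nontrivial second argument you have not supplied. Your $p$-adic bookkeeping (pick a prime $p$ with $\nu_p(d)<\nu_p(p_n)$, escalate to $N$ with $\nu_p(p_N)$ large using torsion-freeness) is sound as far as it goes, but it only tells you what contradiction you would reach if the shared boundary existed — it does nothing toward establishing it.

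The paper closes this gap by a different, and more indirect, combinatorial device: it introduces $\gamma(m)$, the cardinality of the cyclic $\sigma^m$-minimal partition, and proves that $u$ contains at most $|\mathcal A| + |\mathcal A|^2(\gamma(p_n)-1)$ distinct subwords of length $p_n$, because $u$ is covered by overlapping $n$-supertiles whose starting positions recur with gap $\gamma(p_n)$. Torsion-freeness then enters through an arithmetic lemma (statement (6) of Lemma~\ref{lem:partition}): if $\gamma(p)<p$ for some prime $p$ dividing the lengths and $\nu_p(p_n)\to\infty$, then $\gamma(p_n)/p_n\to 0$. Combining the two, $\gamma(p_n)<p_n$ for some $n$ would force the number of length-$p_n$ factors of $u$ to drop below $p_n$, which by a Morse--Hedlund-type argument makes $u$ periodic, contradicting aperiodicity. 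Hence $\gamma(p_n)=p_n$ for all $n$, and the cyclic partitions of full size $p_n$ assemble into the tiling factor map. This word-count route is what replaces your missing shared-boundary lemma; without something playing the role of $\gamma$ and the factor-complexity bound, I do not see how your direct continuity argument can be completed.
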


	To prove Theorem \ref{thm:dekking-sadic} we use the notation and a modified procedure from \cite{Dekking1977}. The following notions originate in \cite{Gottschalk-Hedlund-1955}. A non-empty closed $\sigma^k$-invariant subset of $X$ is {\em $\sigma^k$-minimal} if it contains no proper closed $\sigma^k$-invariant sets.
	A cyclic partition  $\{ X_1, X_2, \cdots , X_m\}$ of size $m$ of $X$ is a partition where $\sigma(X_i)= X_{(i+1) \bmod m}$ for each $i$. 
	The cyclic partition of size $m$ is {\em $\sigma^n$-minimal} if each partition element is
	 $\sigma^n$-minimal. Note that as we work with Cantor spaces,  elements of a $\sigma^n$-minimal partition must be clopen.
	Define $\gamma(n)$ to be the cardinality of a cyclic $\sigma^n$-minimal partition. This partition is unique up to cyclic permutation, so we can define the accompanying equivalence relation $\Lambda_n$ on $X$ where $\Lambda_n(x)=\Lambda_n(y)$ if and only if $x, y$ belong to the same 
	$\sigma^n$-minimal partition element. Let $\nu_p(n)$ denote the $p$-adic valuation of $n$.
	
	\begin{lemma}\label{lem:partition} Let $(X,\sigma)$ be a minimal shift. Then
	\begin{enumerate}
	\item $1 \leq \gamma(n) \leq n$ and $\gamma(n) \mid n$,
	\item $\Lambda_{\gamma(n)}= \Lambda_n$ and $\gamma(\gamma(n))=\gamma(n)$,
	\item if $m\mid n$ then $\Lambda_n\subset \Lambda_m$, and if $\gamma(n)=n$ then $\gamma(m)=m$,
	\item if $\gamma(n)>1$ then there exists $m\mid n$ such that $\gamma(m)=m$,
	\item if $(m,n)=1 $ then $\Lambda_{mn}= \Lambda_m \cap \Lambda_n$ and $\gamma(mn)= \gamma(m)\gamma(n)$, and 
	\item if $p$ is prime, $\gamma(p)<p$ and  $(p_n)$ is a sequence such that  $\nu_p(p_n)\uparrow \infty$ then $\gamma (p_n)/p_n \rightarrow 0$. 
	 \end{enumerate}
	\end{lemma}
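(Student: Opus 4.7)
The plan is to step through items (1)--(6) in order, repeatedly exploiting the uniqueness (up to cyclic permutation) of the $\sigma^n$-minimal cyclic partition $P_n$, and letting each item feed into the next. For (1), $\sigma$ cyclically permutes the cells of $P_n$ and each cell is $\sigma^n$-invariant, so the orbit length $\gamma(n)$ must divide $n$. For (2), let $\{X_i\}_{i<m}$ with $m=\gamma(n)$ be the cells of $P_n$; any closed $\sigma^m$-invariant subset of $X_i$ is automatically $\sigma^n$-invariant (as $m \mid n$), hence empty or $X_i$, so each $X_i$ is $\sigma^m$-minimal, and uniqueness identifies this with $P_m$, yielding $\gamma(m)=m$ and $\Lambda_m=\Lambda_n$. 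For (3), when $m \mid n$ each cell of $P_m$ is $\sigma^n$-invariant, so $P_n$ refines $P_m$ and $\Lambda_n\subset\Lambda_m$; if $\gamma(n)=n$, set $Y_j=\bigcup_{i\equiv j\bmod m}X_i$, and for any closed $\sigma^m$-invariant $W\subseteq Y_j$ each $W\cap X_i$ is closed and $\sigma^n$-invariant (hence empty or $X_i$), while $+m$ acts transitively on the relevant indices, forcing $W\in\{\emptyset,Y_j\}$ and thus $\gamma(m)=m$. Part (4) is immediate from (2) by taking $m=\gamma(n)$.

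For (5), set $d=\gamma(mn)$ and write $d=d_m d_n$ uniquely with $d_m \mid m$ and $d_n \mid n$. Since by (3) both $\gamma(m)$ and $\gamma(n)$ divide $d$, and $(\gamma(m),\gamma(n))=1$ follows from $(m,n)=1$, I obtain $\gamma(m)\gamma(n) \mid d$. For the reverse inclusion, $\sigma^n$ acts on the $d$ cells of $P_{mn}$ with $d_n$ orbits of length $d_m$; let $U_0,\dots,U_{d_n-1}$ denote their unions. Each $U_j$ is closed and $\sigma^n$-invariant, and for any cell $Z_k$ of $P_n$, the intersection $U_j\cap Z_k$ is closed and $\sigma^n$-invariant, hence empty or $Z_k$ by $\sigma^n$-minimality. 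Thus each $Z_k$ lies in exactly one $U_j$, the partition $\{U_j\}$ coarsens $P_n$, and $d_n \mid \gamma(n)$; symmetrically $d_m \mid \gamma(m)$, so $d \mid \gamma(m)\gamma(n)$ and equality holds. The identity $\Lambda_{mn}=\Lambda_m\cap\Lambda_n$ then follows from $\Lambda_{mn}\subseteq\Lambda_m\cap\Lambda_n$ (by (3)) together with the matching cardinalities $\gamma(mn)=\gamma(m)\gamma(n)$.

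For (6), part (1) forces $\gamma(p)=1$. The crucial intermediate step is $\gamma(p^k)=1$ for every $k$: if $\gamma(p^k)=p^s$ with $s\geq 1$, then (2) gives $\gamma(p^s)=p^s$, and combining $p \mid p^s$ with (3) yields $\gamma(p)=p$, contradicting the hypothesis. Writing $p_n=p^{a_n}m_n$ with $(p,m_n)=1$ and $a_n\to\infty$, part (5) then gives $\gamma(p_n)=\gamma(p^{a_n})\gamma(m_n)=\gamma(m_n)\leq m_n$, so $\gamma(p_n)/p_n\leq 1/p^{a_n}\to 0$. The main technical obstacle will be the upper bound in (5), since passing from the $\sigma^n$-orbit structure on $P_{mn}$ to a genuine coarsening of $P_n$ requires mixing minimality at the two scales $n$ and $mn$; everything else reduces to bookkeeping of cyclic partitions and invariance under powers of $\sigma$.
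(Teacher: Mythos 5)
Your proposal is correct, but it does more work than the paper does: the paper simply cites Dekking's Lemma~3 for parts (1)--(5) and only writes out a proof of (6), whereas you reprove (1)--(5) from first principles. Your proof of (6) is essentially identical to the paper's (the paper shows $\gamma(p^a)<p$ by exactly your contradiction argument via parts (2) and (3), then applies (5) and the bound $\gamma(s_n)\le s_n$); the only cosmetic difference is that you push the conclusion to $\gamma(p^k)=1$ before estimating. Your arguments for (1)--(4) are clean and standard. In (5) two steps are stated a bit tersely but are correct: first, the deduction ``$\gamma(m)\mid d$ and $\gamma(n)\mid d$ by (3)'' quietly uses that when one cyclic partition refines another, the coarser cardinality divides the finer one (which follows by tracking a single cell around the orbit); second, the final sentence deducing $\Lambda_{mn}=\Lambda_m\cap\Lambda_n$ from ``matching cardinalities'' needs the observation that $\Lambda_m\cap\Lambda_n$ has exactly $\gamma(m)\gamma(n)$ classes, which follows from coprimality of $\gamma(m)$ and $\gamma(n)$ via CRT (every pair of cells from $P_m$ and $P_n$ meets). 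Filling in those two sentences would make the write-up airtight, but the overall structure is sound, and the self-contained derivation of (1)--(5) is a reasonable alternative to the paper's citation of Dekking.
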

	
	\begin{proof}
	 The statements \textbf{(1)}-\textbf{(5)} are in \cite[Lem.~3]{Dekking1977}.
	We prove only the last statement. Write $p_n = p^{k_n}s_n$ where $(p,s_n)=1$. By Statement (5), 
	we have $\gamma(p_n)= \gamma ( p^{k_n}) \gamma(s_n)$. We claim that $\gamma(p^a)<p$ for each $a\in \N$. For suppose that $\gamma(p^a)= p^b$ with $1\leq b\leq a$. Then $\gamma(p^b) = \gamma^2(p^a)= \gamma(p^a)= p^b$ by (2). Now $\gamma(p^b) = p^b$ and (3) implies that $\gamma(p)=p $, a contradiction to our assumption. Thus, since $\nu_p(p_n)\uparrow \infty$, 
	\[  \frac{ \gamma(p_n)}{p_n} = \frac{ \gamma (p^{k_n}) \gamma(s_n) }{p^{k_n} s_n} \leq   \frac{ \gamma (p^{k_n})  }{p^{k_n} } < \frac{ p  }{p^{k_n} } \rightarrow 0. \]
	\end{proof}
	
	\begin{proof}[Proof of Theorem \ref{thm:dekking-sadic}] 
	We will prove the theorem in the case  where the directive sequence is defined on a single alphabet $\mathcal A$, to ease notation. The proof is the same if ${\boldsymbol \theta}$ is
	defined on a sequence of alphabets, as long as their size remains bounded.

	Let $u=u^{(0)}\in X_{\boldsymbol \theta}$ be a limit word, i.e., there is a sequence $(u^{(n)})_{n\geq 0}$ with $u^{(n)}\in X^{(n)}$ and $u^{(n)}= \theta^{(n)} (u^{(n+1)})$. Let  $(q_n)$ be the length sequence of  ${\boldsymbol \theta}$.
	We claim that there are at most 
	$\lvert\mathcal A\rvert + \lvert\mathcal A\rvert^2 (\gamma(p_n)-1)$ distinct words of length $p_n$ in $u$ and therefore in  the language of $X_{\boldsymbol \theta}$.
	
	Let $X_{n} = \theta^{[0,n)}(X^{(n)})$; then minimality of $X^{(n)}$ implies that   $X_{n}$ is a $\sigma^{p_n}$-minimal set in $X_{\boldsymbol \theta}$. 
		Since $u= \theta^{[0,n)}(u^{(n)})$, 
		we have $u\in X_{n}$.
	Therefore $\sigma^{k \gamma(p_n)}u \in X_n\subset \bigcup_{a\in \mathcal A} \big[\theta^{[0,n)}(a)\big]$ for each $k\in \Z$. Thus
	$u$ is composed of overlapping words of the form $\theta^{[0,n)}(a)$, of length $p_n$, spaced at intervals $\gamma(p_n)$. Since there are at most
	$\lvert\mathcal A\rvert$ words of the form $\theta^{[0,n)}(a) $ and at most $\lvert\mathcal A\rvert^2$ words of the form $\theta^{[0,n)}(ab)$, we have proved our claim, as any word $w$ of length $p_n$ in $u$ is either an $n$-supertile or overlaps two adjacent $n$-supertiles; in the second case the first $n$-supertile may appear at most $\gamma(p_n)-1$ positions before $w$ in $u$.

	If $\gamma(p_n)<p_n$ for some $n$, then by Lemma \ref{lem:partition}, $\lim_n \gamma (p_n)/p_n = 0$. We can therefore find an $n$ such that
	$\lvert\mathcal A\rvert + \lvert\mathcal A\rvert^2 (\gamma(p_n)-1)< p_n$. But then there are fewer than $p_n$ words of length $p_n$ in $u$, and so $u$ is periodic, a contradiction to the hypothesis that $ X_{\boldsymbol{\theta}}^{(0)}  $ is aperiodic.
	
	Therefore $\gamma(p_n)=p_n$ for each $n$ and  we can  construct $\pi_{\rm tile}\colon X_{\boldsymbol \theta}\rightarrow \Z_{(q_n)}$. Namely, each relation $\Lambda_{p_n}$ defines a cyclic tower of height $p_n$ so we have a map $\pi_n\colon X\rightarrow \Z/p_n\Z$, and  we can choose a cyclic permutation of the tower so that its base $B_n$ contains $u$. In this way $\pi(u)=0$, and also $\pi_{n+1}(x) \bmod{p_{n}} = \pi_n(x)$
	 (here
	 we are implicitly using the inverse-limit form of $\Z_{(q_n)}$). 
   Also, $\sigma^{j p_n}(u)\in B_n$ for each $j$, and these are the only shifts of $u$ that belong to $B_n$. Finally by minimality,  $x\in B_n$ if and only if
	$$x= \lim_{\ell\rightarrow \infty} \sigma^{j_\ell p_n} (u)=  \lim_{\ell\rightarrow \infty} \sigma^{j_\ell p_n}  \theta^{[0,n)}(u^{(n)}) =  \lim_{\ell\rightarrow \infty}  \theta^{[0,n)}(\sigma^{j_\ell}u^{(n)}),  $$
	so that $ B_n =  \theta^{[0,n)}(X^{(n)})$. Similarly, $\sigma^j(B_n) = \sigma^j (\theta^{[0,n)}(X^{(n)}))$ for $0<j<p_n$. 
	
	 Note that we can take $\pi_{\rm tile}(u)=0$ whenever $u$ is a limit word as follows. Each $\Lambda_{p_n}$ defines a tower of height $p_n$ which is unique up to cyclic rotation and we have taken the base to contain each limit word $u$. By minimality, $x$ belongs to the base only if  $x\in  \theta^{[0, n+1)}(X^{(n+1)})$. This also means that $\pi_{\rm tile}(x)=0$ only if $x\in \bigcap_{n\ge 0}  \theta^{[0, n)}(X^{(n)})$, i.e. if $x$ is a limit word.
	 \end{proof}
	 
	 \begin{remark}\label{rem:non-bounded}
	 Note that the requirement that we   work with a sequence of bounded alphabets is used in exactly one place.  Let $p_n=q_0 q_1\dots q_{n-1}$. The torsion-free assumption on the directive sequence gives us $\lim_n \gamma (p_n)/p_n = 0$. The boundedness of alphabet cardinalities allows us to  find an $n$ such that
	$\lvert\mathcal A_n\rvert + \lvert\mathcal A_n\rvert^2 (\gamma(p_n)-1)< p_n$. Therefore, given a sequence $(p_n)$ we can relax the condition on the alphabet sizes, requiring only  that  $\lim_n \frac{\lvert\mathcal A_n\rvert}{p_n} =0$. With this condition, we can  generalise the statement of Theorem \ref{thm:dekking-sadic}. Namely,  suppose   that $\theta^{(n)}:\mathcal A_{n+1}\rightarrow \mathcal A_n^{q_n}$, and that ${\boldsymbol \theta}=(\theta^{(n)})$ is torsion free. If  
	 \[\lim_n \frac{\lvert\mathcal A_n\rvert}{q_0\dots q_{n-1}} =0,\] 
	 then $\boldsymbol{\theta}$ is quasi-recognizable. We state this as a remark as the emphasis in this paper is on bounded alphabet sequences of morphisms; but this implies that many of our results in this paper that hold for torsion-free sequences on bounded alphabets extends to this family.
	
	\end{remark}
	
	Suppose that $\boldsymbol{ \theta}$ is torsion-free and $\tau\colon X_{\boldsymbol{ \theta}}\to Y$ is a radius-$0$ sliding block map, i.e., a {\em code}, onto an infinite shift space $Y$. Then, by considering the directive sequence $\boldsymbol{ \theta}'\coloneqq (\tau\circ\theta^{(0)},\theta^{(1)},\dotsc)$, we see that $\boldsymbol{\theta}'$ is a torsion-free directive sequence whenever $X_{\boldsymbol{\theta}'}$ is infinite. 
	
	Next, suppose that $\tau \colon X_{\boldsymbol{ \theta}}\to Y$ has left radius $l$ and right radius $r$. We modify the directive sequence $(\theta^{(n)} )$, replacing each $\theta^{(n)}$ by its {\em $l+r+1$-sliding block presentation of  $X_{\boldsymbol{ \theta}}$} as follows.	  Given a length-$\ell$ substitution, $\theta$, we define $\eta$ on the alphabet consisting of words of length $l+r+1$ that belong to $\mathcal L_\theta$. Now if $(a_1,\dots ,a_{l+r+1})\in \mathcal L_\theta$, and $\theta(a_1,\dots ,a_{l+r+1})= b_1 \dots b_{(l+r+1)\ell }$, define
		\[\eta ( (a_1,\dots ,a_{l+r+1}))  := (b_1, \dots ,b_{\ell} ) (b_2, \dots , b_{\ell +1} ) \dots (b_\ell, \dots, b_{2\ell-1}); \]
	it is straightforward to show that $(X_\theta,\sigma)$ and $(X_{\eta},\sigma)$ are topologically conjugate; see for example \cite[Section 5.4]{queffelec}.
		In the $S$-adic setting, we can follow a similar procedure. Namely
	  if we replace the sequence 
	$(\theta^{(n)} )$ with $(\eta^{(n)} )$,  defining  $\eta^{(n)} $ on the alphabet of legal words from $\mathcal L^{(n+1)}$ of length $l+r+1$, then as we have not modified the substitution lengths, the new directive sequence ${\boldsymbol \eta}$ is also torsion-free, and defined on a bounded sequence of alphabets if  ${\boldsymbol \theta}$ is. One can also show that $(X_{\boldsymbol{ \theta}},\sigma)$ is topologically conjugate to $(X_{\boldsymbol{ \eta}},\sigma)$. Furthermore the sliding block code $\tau$ can be transferred to $\tau_\eta: X_{\boldsymbol{ \eta}}\rightarrow Y$, where $\tau_\eta$ is a code and $\tau( X_{\boldsymbol{ \theta}}   ) = \tau_\eta(X_{\boldsymbol{ \eta}})$.  In other words, $\tau( X_{\boldsymbol{ \theta}}   ) $ is generated by the directive sequence $(\tau\circ\eta^{(0)},\eta^{(1)},\dotsc)$.  
	
	Thus we can assume that 
	 any topological factor $(Y,\sigma)$ of $(X_{\boldsymbol \theta}, \sigma)$, with ${\boldsymbol \theta}$ torsion-free,  can be given by such a code, and we have the following. 
		Note that it is a far easier result to prove than the corresponding result for automatic sequences in \cite{Mullner-Yassawi-2021}.
	
	\begin{corollary}\label{cor:factor-quasi}	
		Let $\boldsymbol{\theta}$ be a torsion-free  directive sequence, 
		defined on a  sequence of bounded alphabets.
			Then any infinite factor of $(X_{\boldsymbol \theta}, \sigma)$ is  a torsion-free S-adic shift.	\end{corollary}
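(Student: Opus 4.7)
The argument is essentially assembled from the construction preceding the statement; my plan is to organise it into three concrete reduction steps, each of which is routine once the higher-block trick is in place.

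First, by the Curtis--Hedlund--Lyndon theorem, any topological factor map $\tau\colon (X_{\boldsymbol{\theta}},\sigma) \to (Y,\sigma)$ is a sliding block code with some left radius $l$ and right radius $r$. My first step is to reduce to the radius-$0$ (letter-to-letter) case by passing to a higher-block presentation: replace each $\theta^{(n)}$ by the morphism $\eta^{(n)}$ defined on the alphabet of legal length-$(l+r+1)$ words from $\mathcal{L}^{(n+1)}$, as in the paragraph preceding the corollary. This modification preserves the length sequence, preserves boundedness of alphabet sizes (the new alphabets have cardinality at most $\lvert \mathcal{A}_n \rvert^{l+r+1}$, uniformly bounded since the $\lvert \mathcal{A}_n \rvert$ are), and preserves minimality at every level. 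The resulting directive sequence $\boldsymbol{\eta}$ is therefore torsion-free on a bounded sequence of alphabets, and $(X_{\boldsymbol{\eta}},\sigma)$ is topologically conjugate to $(X_{\boldsymbol{\theta}},\sigma)$. Under this conjugacy $\tau$ transforms into a radius-$0$ code $\tau_\eta \colon X_{\boldsymbol{\eta}} \to Y$ with the same image.

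Next, I will form the directive sequence $\boldsymbol{\eta}' \coloneqq (\tau_\eta \circ \eta^{(0)}, \eta^{(1)}, \eta^{(2)}, \dotsc)$, obtained by precomposing the zeroth morphism with $\tau_\eta$. Because $\tau_\eta$ is letter-to-letter, it commutes with concatenation in the obvious way, so a direct check of supertile languages shows that $X_{\boldsymbol{\eta}'} = Y$. The length sequence of $\boldsymbol{\eta}'$ equals that of $\boldsymbol{\eta}$, hence that of $\boldsymbol{\theta}$, so the divisibility condition from Definition \ref{def:torsion-free} is inherited verbatim. The higher-level shifts $X_{\boldsymbol{\eta}'}^{(n)}$ for $n \ge 1$ are literally the shifts $X_{\boldsymbol{\eta}}^{(n)}$, and so remain minimal, while the alphabet of $Y$ is finite, keeping the bounded-alphabet hypothesis intact.

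Finally, I will verify the remaining hypothesis, that $X_{\boldsymbol{\eta}'}^{(0)} = Y$ is aperiodic. Since $Y$ is infinite by assumption and is a continuous factor of the minimal system $(X_{\boldsymbol{\theta}},\sigma)$, it is itself infinite and minimal; an infinite minimal shift contains no periodic points, so $Y$ is aperiodic. This completes the verification that $Y$ is a torsion-free $S$-adic shift. The only delicate point in the plan is the compatibility of the higher-block recoding with both the torsion-free condition and the bounded-alphabet hypothesis simultaneously; this is exactly why the construction recalled just before the corollary uses length-preserving block substitutions rather than any coarser desubstitution procedure.
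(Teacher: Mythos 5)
Your proof follows exactly the paper's approach: pass to the higher-block presentation $\boldsymbol{\eta}$ to reduce the factor map to a radius-$0$ code, precompose the zeroth morphism with that code to obtain a new directive sequence generating $Y$, and observe that neither the length sequence nor the bounded-alphabet property is disturbed. The only content you add beyond what the paper records in the paragraphs preceding the corollary is the explicit final check that $Y$ is aperiodic (infinite minimal shifts have no periodic points), which the paper leaves implicit in the phrase ``whenever $X_{\boldsymbol{\theta}'}$ is infinite.''
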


		Let $\boldsymbol{ \theta}=(\theta^{(j)})_{j\geq 0}$ and $\boldsymbol{\eta}=(\eta^{(j)})_{j\geq 0}$ 
		be two directive sequences sharing the same length sequence $(q_j)_{j \geqslant 0}$. If the shift spaces $X_{\boldsymbol{\theta}}$ and $X_{\boldsymbol{\eta}}$ are conjugate, we can see by composing the conjugacy map with the relevant  tiling factor map, if it exists, that  quasi-recognizability is a conjugacy invariant for $\mathcal{S}$-adic shifts. A natural follow-up question is thus whether a factor map between two	$\mathcal{S}$-adic shifts preserves quasi-recognizability. Below we see that this is not the case.
		\begin{example}\label{ex:quasirec-under-factor}
		Consider the two substitutions $\alpha$ and $\beta$ given by 
		\begin{align*}
			\alpha\colon A &\mapsto 00 &  \beta\colon A & \mapsto ACABA \\
			B &\mapsto 01 & B &\mapsto ACAAA \\
			C &\mapsto 10 & C &\mapsto AAABA
		\end{align*}
and the directive sequence $\boldsymbol{\alpha} = (\alpha, \beta, \beta,
		\beta, \ldots)$. This is a known example \cite{BSTY-2019} of a non-recognizable
		directive sequence, as each $x \in X_{\boldsymbol{\alpha}}$ can be written in
		two ways as $\alpha (y)$ or $\sigma \circ \alpha (z)$ for some $y, z \in
		X_{\beta}$. It cannot be quasi-recognizable either. For,  as both $\alpha$
		and $\beta$ are injective, quasi-recognizability would imply
		recognizability by Lemma~\ref{quasi-almost-recog}.
		
		However, we can easily convert this directive sequence into another that
		engenders a recognizable shift, by replacing $\alpha$ with the following
		substitution:
		\begin{align*}
			\bar{\alpha}\colon A &\mapsto 00 \\
			B &\mapsto 01 \\
			C &\mapsto \bar{1}0
		\end{align*}
		The new directive sequence $\bar{\boldsymbol{\alpha}} = (\bar{\alpha},
		\beta, \beta, \beta, \ldots)$ is easily seen to be rec\-og\-niz\-a\-ble. Indeed, any
		point $x \in X_{\bar{\boldsymbol{\alpha}}}$ must contain a $\bar{1}$
		somewhere, which necessarily is the start of a supertile $\bar{1} 0$; thus, the parity of the index $j$ where the symbol $\bar{1}$ is found determines whether $x$ is of the form $\bar{\alpha} (y)$ or $\sigma \circ \bar{\alpha} (y)$, for some $y \in X_{\beta}$. By injectivity of $\bar{\alpha}$, this $y$ is uniquely determined (and is thus a desubstitution of $x$). As $X_\beta$ is substitutive, recognizability is guaranteed from then on.
		
		The radius-$0$ code $f$ whose local function is given by $0\mapsto 0$ and  $1,\bar{1} \mapsto 1$ is a natural factor map $X_{\bar{\boldsymbol{\alpha}}} \to X_{\boldsymbol{\alpha}}$. This provides an example of a $S$-adic shift with a recognizable directive sequence and with a factor that is not even quasi-recognizable.

	\end{example}

	We end this section with a key result which will be useful later, in Sections \ref{sec:combinatorial-height}  and \ref{sec:column-number}. It tells us that 
		 we can turn a non-injective, quasi-rec\-og\-niz\-a\-ble directive sequence into a recognizable directive sequence, while keeping the top $S$-adic shift $X^{(0)}$ fixed.
		In other words, if we need recognizability, then we can manufacture it, provided that we have quasi-recognizability.

	\begin{theorem} \label{thm:sadic-recognizable}
		Let $\boldsymbol{\theta}$ be a quasi-recognizable  directive sequence 
	defined on a sequence of bounded alphabets.
	Then there exists  a recognizable directive sequence $\boldsymbol{\widehat\theta}$, 
	defined on a sequence of bounded alphabets, 
   such that $X_{  \boldsymbol{\theta}  }= X_{\boldsymbol{\widehat \theta}}$. 
		\end{theorem}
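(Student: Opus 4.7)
The plan is to construct $\widehat{\boldsymbol\theta}$ by collapsing, at each level $n$, those letters of $\mathcal A_n$ that produce identical $n$-th order supertiles. This forces injectivity on letters without changing the top shift or the length sequence, so that quasi-recognizability upgrades to recognizability via Lemma \ref{quasi-almost-recog}.

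Concretely, I would set $\pi_0 := \id_{\mathcal A_0}$ and, for $n \geq 1$, define $\pi_n\colon \mathcal A_n \to \mathcal A_0^+$ by $\pi_n(a) := \theta^{[0,n)}(a)$; then take $\widehat{\mathcal A}_n := \pi_n(\mathcal A_n)$, which is bounded in cardinality by $|\mathcal A_n|$. Next, define $\widehat\theta^{(n)}\colon \widehat{\mathcal A}_{n+1} \to \widehat{\mathcal A}_n^+$ on a letter $\pi_{n+1}(b)$, with $\theta^{(n)}(b) = a_0 a_1 \cdots a_{q_n-1}$, by
\[
\widehat\theta^{(n)}(\pi_{n+1}(b)) := \pi_n(a_0)\,\pi_n(a_1) \cdots \pi_n(a_{q_n-1}).
\]
The first thing to verify is well-definition: if $\pi_{n+1}(b) = \pi_{n+1}(b')$, then the two decompositions of the common length-$p_{n+1}$ word $\theta^{[0,n+1)}(b) = \theta^{[0,n+1)}(b')$ into $q_n$ consecutive length-$p_n$ blocks must agree blockwise, so $\pi_n(a_j) = \pi_n(a'_j)$ for each $j$. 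A short induction on $N$ then yields $\widehat\theta^{[0,N)}(\pi_N(a)) = \theta^{[0,N)}(a)$ as words in $\mathcal A_0^+$, so the level-$0$ supertile languages coincide and $X_{\widehat{\boldsymbol\theta}} = X_{\boldsymbol\theta}$. Since $\widehat{\boldsymbol\theta}$ has the same length sequence $(q_n)$, the tiling factor map witnessing quasi-recognizability of $\boldsymbol\theta$ also witnesses it for $\widehat{\boldsymbol\theta}$.

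It then remains to check the hypotheses of Lemma \ref{quasi-almost-recog} for $\widehat{\boldsymbol\theta}$. Injectivity on letters at each level is immediate from the construction: if $\widehat\theta^{(n)}(\pi_{n+1}(b_1)) = \widehat\theta^{(n)}(\pi_{n+1}(b_2))$, then $\pi_n(a_j^{(1)}) = \pi_n(a_j^{(2)})$ positionwise, and concatenation recovers $\theta^{[0,n+1)}(b_1) = \theta^{[0,n+1)}(b_2)$, i.e., equality in $\widehat{\mathcal A}_{n+1}$. Minimality of $(X_{\widehat{\boldsymbol\theta}}^{(n)},\sigma)$ follows because the letter-by-letter extension of $\pi_n$ is a shift-equivariant continuous surjection from the minimal system $(X_{\boldsymbol\theta}^{(n)},\sigma)$. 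The main conceptual obstacle, and what guides the choice of $\pi_n$, is to identify a quotient of the alphabets that simultaneously eliminates every letter-level ambiguity and commutes with passage to the next level; the map $\pi_n(a) := \theta^{[0,n)}(a)$ is the natural candidate, ensuring both that $\widehat\theta^{(n)}$ descends consistently and that $X_{\boldsymbol\theta}$ is preserved on the nose.
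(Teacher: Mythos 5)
Your proof is correct and is essentially the paper's own argument in closed form: the paper constructs $\widehat{\boldsymbol\theta}$ by a level-by-level recursion, at step $n$ collapsing letters $a,a'$ with $\eta^{(n)}(a)=\eta^{(n)}(a')$, and one can check that the resulting equivalence on $\mathcal A_{n+1}$ is exactly $\theta^{[0,n+1)}(a)=\theta^{[0,n+1)}(a')$, i.e.\ your $\pi_{n+1}$. Your version is a clean non-recursive presentation of the same identification, and you go slightly further than the paper by explicitly verifying minimality of the intermediate shifts $(X^{(n)}_{\widehat{\boldsymbol\theta}},\sigma)$ (needed to invoke Lemma \ref{quasi-almost-recog}), which the paper leaves as ``straightforward to see.''
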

 \begin{proof}For ease of notation we assume that  the original directive sequence $\boldsymbol{\theta}$ is defined on the alphabet $\mathcal A$.
  We construct the sequence  $\boldsymbol{\widehat\theta}$ one morphism at a time.

   If $\theta^{(0)}$ is injective on letters, then we set $ \widehat\theta^{(0)}=\theta^{(0)}$. Otherwise, 
 we introduce an equivalence relation on $\mathcal A$ where $a\sim a'$ if $\theta^{(0)}(a) = \theta^{(0)}(a')$. By assumption there are $k<\lvert\mathcal A\rvert$ equivalence classes for $\sim$. 
 Define $\tau_1\colon \mathcal A\rightarrow \mathcal B_1$ where $\lvert \mathcal B_1 \rvert =k$ and where
 $\tau_1(a)=\tau_1(a')$ if and only if $a\sim a'$. Now define $\widehat\theta^{(0)} \colon \mathcal B_1\rightarrow \mathcal A^{q_0}$ by $\widehat\theta^{(0)}(b)= \theta^{(0)}(a)$ for any $a\in \tau_1^{-1}(b)$;  $\widehat\theta^{(0)}$ is well defined and injective on letters. Set $\eta^{(1)}\coloneqq  \tau_1\circ \theta^{(1)}$, then $\eta^{(1)}\colon \mathcal A\rightarrow \mathcal B_1^{(q_1)}$. Note that by construction, $\lvert\widehat\theta^{(0)}\rvert=q_0$ and $\eta^{(1)}=q_1$, and the tiling map $\pi_{\rm tile} \colon X^{(0)}\rightarrow   \Z_{(q_n)}$ does not change. Also,   $X^{(0)}_{ \boldsymbol{\theta}      } = X^{(0)}_{ ( \widehat\theta^{(0)}, \eta^{(1)}, \theta^{(2)}, \dotsc)}.$

 We can thus replace $  \boldsymbol{\theta}   $ with the directive sequence $   (\widehat\theta^{(0)}, \eta^{(1)}, \theta^{(2)}, \ldots)$; it still generates  $X^{(0)}_{ \boldsymbol{\theta}      }$. 
  If $\eta^{(1)}$ is injective on letters, we set 
 $\widehat\theta^{(1)} \coloneqq \eta^{(1)}$.
   Otherwise define $\tau_2 \colon \mathcal A\rightarrow \mathcal B_2 $ 
  by $\tau_2(a)=\tau_2(a')$ if and only if $\eta^{(1)}(a) = \eta^{(1)}(a')$,
    and  define  $\widehat\theta^{(1)}\colon \mathcal B_2\rightarrow \mathcal B_1^{(q_1)}$ 
    as $\widehat\theta^{(1)}(c) = \eta^{(1)} (a)$ for any $a\in \tau_2^{-1}(c)$;  as before $\widehat\theta^{(1)}$ is well defined and injective on letters. Define $\eta^{(2)} \coloneqq \tau_2 \circ \theta^{(2)}$; then $\eta^{(2)}$ is a morphism $\mathcal A\rightarrow \mathcal B_2^{( q_2)}$.  Now replace the directive sequence $   (\widehat\theta^{(0)}, \eta^{(1)}, \theta^{(2)}, \dotsc)$ with $( \widehat\theta^{(0)},   \widehat\theta^{(1)},  \eta^{(2)}, \theta^{(3)}, \dotsc)$;  as before the sequence of lengths remains the same, as does the tiling map $\pi_{\rm tile} \colon X^{(0)}\rightarrow   \Z_{(q_n)}$, and $X^{(0)}_{\boldsymbol{\theta}} = X^{(0)}_{(\widehat\theta^{(0)},\,  \widehat\theta^{(1)}, \,\eta^{(2)},\, \theta^{(3)}, \dotsc)}.$ 
\begin{figure}[h]
	\includegraphics{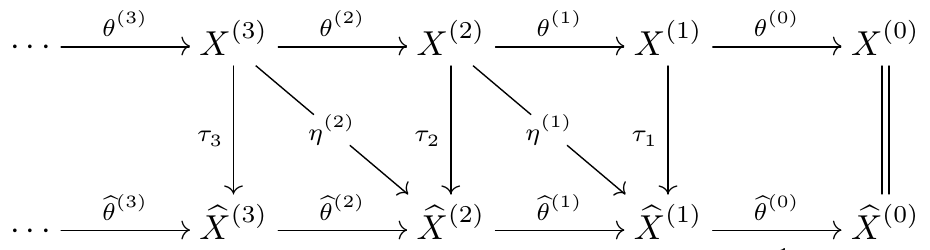}
	\caption{How to obtain an injective sequence of morphisms $\boldsymbol{\widehat\theta}$ from the original directive sequence $\boldsymbol{\theta}$. }	\label{fig:injectivization_of_dir_seq}
\end{figure} 

  We continue recursively in this way: see Figure \ref{fig:injectivization_of_dir_seq}.
  It is straightforward to see now that the directive sequence $\boldsymbol{ \widehat\theta} = (\widehat{\theta}^{(0)},\widehat{\theta}^{(1)},\widehat{\theta}^{(2)},\dotsc)$ has the desired properties. \end{proof}

\begin{example} \label{ex:quasi_recognizable_but_not_recognizable}
		In this example, we show that there exist quasi-recognizable directive sequences which are not  recognizable. Consider the substitutions $\theta,\vartheta$ and $\varrho$ given by
	\begin{align*}
		\theta\colon 0 &\mapsto 011 &  \vartheta\colon 0 &\mapsto 0\bar{1}1 & \varrho\colon 0 &\mapsto 011 \\
		1 &\mapsto 001 & 1 &\mapsto 0\bar{0}1 & 1 &\mapsto 001, \\
		& & \bar{0} &\mapsto \bar{0}1\bar{1} & \bar{0} &\mapsto 011 \\
		& & \bar{1} &\mapsto \bar{0}0\bar{1}, & \bar{1} &\mapsto 001,
	\end{align*}
	and consider the directive sequence $\boldsymbol{\alpha} = (\varrho,\vartheta,\vartheta,\vartheta,\dotsc)$. By definition, this is a torsion-free sequence on $\mathcal A=\{0,1\}$, and we can see that the shift $X_{\boldsymbol{ \alpha}}$ can be written as:
	\[X_{\boldsymbol{ \alpha}} = \varrho(X_\vartheta)\cup\sigma\circ\varrho(X_\vartheta)\cup\sigma^2\circ\varrho(X_\vartheta), \]
	where $X_\vartheta$ is the corresponding substitutive subshift.

	The net effect of applying $\varrho$ on a fixed point of $\vartheta$ is just the removal of the bars above the symbols.
		 Therefore $X_{\boldsymbol{ \alpha}}=X_\theta$, and 
		 		 $\boldsymbol{ \alpha}$  is quasi-recognizable. However,  $1.\bar{0}$ and $\bar{1}.0$ are valid seeds for distinct fixed points $x^{(1)} = \vartheta^\infty(1.\bar{0})$ and $y^{(1)} = \vartheta^\infty(\bar{1}.0)$ of $X_\vartheta = X^{(1)}$, which satisfy $\varrho(x^{(1)})= \varrho(y^{(1)})$, and thus the fixed point $\theta^\infty(1.0)\in X_\theta = X^{(0)}$ has two possible de\-sub\-sti\-tu\-tions in $X^{(1)}$. Hence, $\boldsymbol{ \alpha}$
				 				  is not recognizable.

	Applying Theorem \ref{thm:sadic-recognizable}
	 to the sequence $\boldsymbol{ \alpha}$, we note that $\varrho=\theta\circ\tau$, where $\tau$ is defined by $0,\bar{0}\mapsto 0$ and $1,\bar{1}\mapsto 1$; hence, $\widehat{\alpha}^{(0)} = \theta$, with the second substitution $\alpha^{(1)} = \vartheta$ being replaced by $\eta^{(1)} =\tau\circ\vartheta = \varrho$. Thus,
	  we obtain $\widehat{\boldsymbol{ \alpha}} = (\theta,\theta,\theta,\dotsc)$ as the injectivisation of $\boldsymbol{ \alpha}$, which is consistent with the observation that $X_{\boldsymbol{ \alpha}}=X_\theta$. Also, $\boldsymbol{ \widehat\alpha}$ is recognizable \cite{Mosse-1992}. 
\end{example}

\section{Height and the pure base}\label{sec:height}

	If a minimal  shift $(X,\sigma)$ is a somewhere one-to-one extension of an equicontinuous system, then that system must be the maximal equicontinuous factor of $(X,\sigma)$ \cite{Williams, Downarowicz2005}. This is no longer true if $(X,\sigma)$ is a somewhere finite-to-one extension of an odometer; see Example~\ref{ex:extension_not_product}. However  its maximal eq\-ui\-con\-tin\-u\-ous factor remains an odometer, as follows. We note that Basti\'an Espinoza has obtained a similar result via a different method \cite{Espinoza}.
	
	\begin{theorem}\label{thm: mef-c-to-one}
	Let $(X,\sigma)$ be a minimal shift. If $(X,\sigma)$ is a somewhere finite-to-one extension of an odometer $(\Z_{(q_n)}, +1)$, then the maximal equicontinuous factor of  $(X,\sigma)$ must be an odometer which is a rotation on a group extension of $\Z_{(q_n)}$ by a finite cyclic group $\mathbb{Z}/h\mathbb{Z}$.
	\end{theorem}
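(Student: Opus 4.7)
The plan is to push the finite-to-one hypothesis up to the maximal equicontinuous factor and then exploit the algebraic structure of compact monothetic groups. First, let $\pi_{\max}\colon (X,\sigma)\to (Z,+g)$ denote the maximal equicontinuous factor, and let $\pi\colon X\to\Z_{(q_n)}$ be the given factor map. Since $(\Z_{(q_n)},+1)$ is equicontinuous, the universal property of the MEF yields a continuous $\mathbb{Z}$-equivariant surjection $\rho\colon(Z,+g)\to(\Z_{(q_n)},+1)$ with $\pi=\rho\circ\pi_{\max}$. Because $\rho$ is a factor map between two minimal rotations on compact abelian groups, after composing with a translation of the target we may assume $\rho$ is a continuous group homomorphism.

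Next, I would establish that $K\coloneqq\ker\rho$ is finite. Fibres of a surjective group homomorphism between compact groups are cosets of the kernel, so $\lvert\rho^{-1}(z)\rvert=\lvert K\rvert$ for every $z\in\Z_{(q_n)}$. On the other hand, from $\pi=\rho\circ\pi_{\max}$ we have
\[
\lvert\pi^{-1}(z)\rvert=\sum_{w\in\rho^{-1}(z)}\lvert\pi_{\max}^{-1}(w)\rvert\ge\lvert\rho^{-1}(z)\rvert=\lvert K\rvert,
\]
using that each $\pi_{\max}$-fibre is non-empty. By hypothesis $\lvert\pi^{-1}(z_0)\rvert<\infty$ for some $z_0$, so $\lvert K\rvert<\infty$.

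Third, I would verify that $Z$ is totally disconnected. The identity component $Z^0$ is a closed connected subgroup; its image $\rho(Z^0)$ is a connected subgroup of the totally disconnected group $\Z_{(q_n)}$, so $\rho(Z^0)=\{0\}$, giving $Z^0\subseteq K$. Since $K$ is finite and hence discrete, this forces $Z^0=\{0\}$. Combined with the fact that $Z$ is monothetic (as the MEF of a minimal $\mathbb{Z}$-action is a minimal rotation on a compact monothetic abelian group), we conclude that $Z$ is a compact, monothetic, profinite abelian group, i.e., itself an odometer.

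The main obstacle, and the final step, is to show that the finite kernel $K$ is cyclic; this is what I expect to require the most care. I would argue via Pontryagin duality: the short exact sequence $0\to K\to Z\to\Z_{(q_n)}\to 0$ dualises to $0\to\widehat{\Z_{(q_n)}}\to\widehat Z\to\widehat K\to 0$. Monotheticity of $Z$ means $\widehat Z$ embeds as a subgroup of $\R/\Z$, and since $\widehat{\Z_{(q_n)}}\subset\Q/\Z$ is torsion with finite quotient $\widehat K$, the group $\widehat Z$ is itself contained in $\Q/\Z$. Decomposing $\widehat Z=\bigoplus_p\widehat Z_p$ into $p$-primary components, each $\widehat Z_p$ is a subgroup of the Prüfer group $\Q_p/\Z_p$, so either $\widehat Z_p=\widehat{\Z_{(q_n)},p}$ (contributing nothing to $\widehat K$) or $\widehat Z_p$ is finite cyclic of $p$-power order (contributing a cyclic $p$-group to $\widehat K$). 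Thus each $p$-primary piece of $\widehat K$ is cyclic, and as only finitely many are non-trivial, the Chinese remainder theorem gives $\widehat K$ cyclic, hence $K\cong\Z/h\Z$ for some $h$. Equivalently, one may invoke the structure theorem identifying $Z$ with $\prod_p G_p$ for procyclic pro-$p$ groups $G_p$, whose finite subgroups are automatically cyclic $p$-groups. This exhibits $Z$ as the promised group extension of $\Z_{(q_n)}$ by $\Z/h\Z$.
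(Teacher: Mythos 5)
Your proposal is correct and takes essentially the same route as the paper's proof (which passes through the more general Theorem~\ref{thm:generalized-mef-c-to-one}): both use the universal property of the MEF to induce a factor map $Z\to\Z_{(q_n)}$, show it becomes a continuous group homomorphism after a translation, bound the kernel using the finite-to-one hypothesis via the decomposition of $\pi$-fibres into $\pi_{\rm MEF}$-fibres, and identify the kernel as finite cyclic via Pontryagin duality. Your $p$-primary decomposition of $\widehat{Z}\subset\Q/\Z$ fleshes out the cyclicity step in more detail than the paper (which only sketches it after Theorem~\ref{thm:generalized-mef-c-to-one}), and your explicit verification that $Z$ is totally disconnected, hence profinite and an odometer, is a clean addition that the paper leaves implicit.
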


We can actually obtain Theorem~\ref{thm: mef-c-to-one} as a consequence of a more general result, without the need for substantial modifications of the proof. Indeed, we can prove that:

\begin{theorem}\label{thm:generalized-mef-c-to-one}
	Let $(X,T)$ be a minimal topological dynamical system and suppose that $(X,T)$ factors onto a group rotation $R_\alpha$ defined over a monothetic group $G$, where the cyclic subgroup generated by $\alpha\in G$ is dense in $G$. If the factor map $\pi_G\colon (X,T)\to (G,R_\alpha)$ is somewhere finite-to-one, then the maximal equicontinuous factor of $(X,T)$ is a group rotation $(Z,R_\beta)$ over a monothetic group $Z$ which is a finite extension of $G$.
\end{theorem}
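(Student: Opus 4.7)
The plan is to factor $\pi_G$ through the maximal equicontinuous factor of $(X,T)$, and then exploit the somewhere finite-to-one property to bound the size of the kernel of the resulting group map.

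First I would invoke the universal property of the maximal equicontinuous factor $\pi_Z \colon (X,T) \to (Z,R_\beta)$. Since $(G,R_\alpha)$ is equicontinuous, there exists a unique continuous factor map $p \colon (Z,R_\beta) \to (G,R_\alpha)$ such that $\pi_G = p \circ \pi_Z$. Minimality of $(X,T)$ passes to its factors, so $(Z,R_\beta)$ is minimal and equicontinuous; hence $Z$ is a compact monothetic group and $R_\beta$ is rotation by some topological generator $\beta$.

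Next I would promote $p$ to a continuous group homomorphism. Composing with a translation by $-p(0)$ produces a continuous surjection $q \colon Z \to G$ with $q(0)=0$ still intertwining the rotations, so $q(n\beta)=n\alpha$ for every $n \in \mathbb{Z}$. Since $\{n\beta : n\in \mathbb{Z}\}$ is dense in $Z$ and $q$ is continuous, a short limit argument gives $q(z_1+z_2)=q(z_1)+q(z_2)$ for all $z_1,z_2 \in Z$. Set $H\coloneqq \ker q$, a closed (hence compact) subgroup of $Z$; then $q$ identifies $Z/H$ with $G$, and the fibres of $p$ are translates of cosets of $H$, each of cardinality $\lvert H \rvert$.

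Finally, I would extract finiteness of $H$ from the hypothesis. Fix $x_0\in X$ such that $\pi_G^{-1}(\pi_G(x_0))$ is finite and set $g_0\coloneqq \pi_G(x_0)$. Writing $\pi_G^{-1}(g_0)=\pi_Z^{-1}(p^{-1}(g_0))$ and noting that $p^{-1}(g_0)$ has cardinality $\lvert H \rvert$ and that $\pi_Z$ is surjective (so each of its fibres is non-empty), we obtain
\[
\lvert\pi_G^{-1}(g_0)\rvert \;\geq\; \lvert p^{-1}(g_0)\rvert \;=\; \lvert H \rvert,
\]
forcing $H$ to be finite. The isomorphism $Z/H\cong G$ then realises $Z$ as a monothetic finite extension of $G$, as required.

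The main obstacle is the promotion of $p$ to a homomorphism in the second step; the argument is standard, but one has to be careful to verify that continuity together with density of the forward orbit of $0$ under $R_\beta$ is sufficient. Once this structural fact is in place, the finite-to-one hypothesis at a single point controls the size of $H$ uniformly via the group-rotation structure on $Z$, which is the genuine source of the conclusion.
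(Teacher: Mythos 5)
Your proposal is correct and follows essentially the same route as the paper: factor $\pi_G$ through the maximal equicontinuous factor, translate the induced map $p$ to a genuine group homomorphism using continuity together with density of the orbit of $0$, identify the fibres of $p$ with cosets of the kernel, and then use the somewhere finite-to-one hypothesis (together with non-emptiness of the $\pi_Z$-fibres) to bound the kernel. The paper's argument is the same in structure and in every key step, so there is nothing substantive to flag.
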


\begin{proof} Let the finite $c$ be such that $(X,T)$ is a somewhere $c$-to-one extension of $(G,R_\alpha)$. Suppose $(Z,R_\beta)$ is the maximal equicontinuous factor of $(X,T)$. Due to the maximality of $Z$, we have the following commutative diagram of factor maps and group actions:
	\[
	\xymatrix{ X \ar[rrr]^\sigma \ar@{->>}[dd]_{\pi_{G}} \ar@{->>}[dr]^{\pi_{\rm MEF}} & & &  X \ar@{->>}[dd]^{\pi_{G}}\ar@{->>}[dl]_{\pi_{\rm MEF}} \\
		& Z \ar[r]^{+\beta} \ar@{-->>}[dl]^{\pi_{\rm ind}} & Z\ar@{-->>}[dr]_{\pi_{\rm ind}}\\
		G \ar[rrr]^{+\alpha} & & & G }
	\]
	where the somewhere $c$-to-one map $\pi_{G}\colon X\to G$ is $\pi_{G} = \pi_{\rm ind} \circ \pi_{\rm MEF}$. Hence, the following relationship holds:
	\[
	\pi_{G}^{-1}(\{g\}) = \bigcup_{z\in\pi_{\rm ind}^{-1}(\{g\})}\pi_{\rm MEF}^{-1}(\{z\}),
	\]
	where the union is disjoint. Since there exists a $g\in G$ with $\lvert \pi_{\rm G}^{-1}(\{g\}) \rvert = c$, then we must have $\lvert\pi_{\rm MEF}^{-1}(\{z\})\rvert \le c$ for every $z \in \pi_{\rm ind}^{-1}(\{g\})$, with equality if, and only if, $g$ has a single preimage in $Z$.
	
	We note that $f(z) = \pi_{\rm ind}(z) - \pi_{\rm ind}(0_Z)$ is a group homomorphism. Indeed, as $\pi_{\rm ind}$ is a factor map that commutes  the $\mathbb{Z}$-actions, one must have:
	\[
	\pi_{\rm ind}(g + \beta) = \pi_{\rm ind}(g) + \alpha \implies f(g + \beta) = f(g) + \alpha,\text{ for every }g\in Z,
	\]
	where $\beta$ and $\alpha$ generate dense cyclic subgroups of $Z$ and $G$, respectively. We identify these subgroups with $\mathbb{Z}$ in both cases. Since $f(0_Z) = 0_G$, it follows that $f(n\cdot \beta) = n\cdot \alpha$ for each  $n\in\mathbb{N}$.
	
	For any $h\in Z$, there is a sequence  of integers $\{h_n\}_{n\in\mathbb{N}}$ with $h_n \cdot \beta\to h$. As $\pi_{\rm ind}$, and so 
	$f$, are 
	  continuous then:
	\begin{align*}
		f(g + h) &= \lim_{n\to\infty} f(g + h_n\cdot \beta)\\
		&= \lim_{n\to\infty} f(g) + h_n\cdot \alpha \\
		&= f(g) + \lim_{n\to\infty} f(h_n\cdot \beta) \\
		&= f(g) + f(h).
	\end{align*}
	As $f$ is a group homomorphism, $f(g) = f(h) \iff g - h \in \ker(f)$. Thus:
	\begin{align*}
		g \in \pi_{\rm ind}^{-1}\left(\{\pi_{\rm ind}(h)\}\right) &\iff \pi_{\rm ind}(g) = \pi_{\rm ind}(h) \\
		&\iff f(g) = f(h) \\
		&\iff g - h \in \ker(f) \\
		&\iff g \in h + \ker(f).
	\end{align*}
	As $\pi_{\rm ind}$ is surjective, every $g \in G$ is of the form $\pi_{\rm ind}(h)$ for some $h\in Z$. Hence every fibre of $\pi_{\rm ind}$ is a coset of $\ker(f)$ and thus has cardinality $r = \lvert \ker(f) \rvert$. Since $\pi_{\rm tile}$ is somewhere finite-to-one, we may take some $g\in G$ such that $\lvert \pi^{-1}_{\rm tile}(\{g\})| = c$. The decomposition of $\pi_G^{-1}(\{g\})$ into fibres of $\pi_{\rm MEF}$ shown above allows us to see that this set is a finite, disjoint union of $r$ non-empty fibres $\pi^{-1}_{\rm MEF}(\{z\})$, and thus $r \le c$; in particular, $r$ must be finite.	By the first isomorphism theorem, $Z/\ker(f) \cong G$, and the result follows. \end{proof}

\begin{remark}
	The proof above is mostly group-theoretic in nature, with the key dynamical property used being that the group rotation $R_\alpha$ is minimal in $G$ and thus the latter group is monothetic, as the orbit of $0$ is a dense cyclic subgroup. This same argument, with minor modifications, applies to more general group actions $T\colon H\times X\to X$ (where $H$ is a sufficiently well-behaved group, such as $\mathbb{Z}^d$) where there exists a finite-to-one factor map $\pi\colon(Z,T,H)\to(G,(R_h)_{h\in H},H)$ onto a topological group $G$ for which there is a monomorphism $\iota\colon H\hookrightarrow G$ such that $\iota(H)$ is dense in $G$ and $R_h(g)=g+\iota(h)$ for all $h\in H$, that is, $G$ is a topological completion of $H$ in the same way as in which a monothetic group is a topological completion of $\mathbb{Z}$.
\end{remark}

Thus most  of Theorem~\ref{thm: mef-c-to-one} is a direct consequence of Theorem~\ref{thm:generalized-mef-c-to-one}; all that remains is the observation that $\ker(f)$ is a finite cyclic group $\Z/h\Z$ for some $h$. This is a consequence of the interpretation of the Pontryagin dual of $Z$ as the subgroup of $S^1$ generated by all continuous eigenvalues of the shift space $(X,\sigma)$, see Remark  \ref{rem:pontryagin_interpretation}, however, it may also be derived from the density of the orbit of $\beta$ in $Z$, using the fact that for every $k\in\ker(f)$ there is some $N_k\in\N$ such that $N_k\cdot\beta$ is arbitrarily close to $k$, and obtaining a single generator for $\ker(f)$ via arithmetic on the $N_k$'s and equicontinuity.

\color{black}

\begin{remark}\label{rem:pontryagin_interpretation}
	As discussed in Section \ref{sec:spectrum-background}, the  Pontryagin dual $\hat{Z}$ of the maximal equicontinuous factor may be interpreted as the subgroup of $S^1$ generated by  eigenvalues of $(X_{\boldsymbol \theta}, \sigma)$. Pontryagin duality and the proof above thus imply that $\hat{\mathbb{Z}}_{(q_n)} = \langle\{\ee^{2\pi \ii/q_n}:n\ge 0\}\rangle$ is an index $h = \lvert\ker(f)\rvert < \infty$ subgroup of $\hat{Z}$, and thus, for any continuous eigenvalue $\lambda$, we must have $\lambda^h \in \hat{\mathbb{Z}}_{(q_n)}$, i.e. any continuous eigenvalue is an $h$-th root of some eigenvalue in the dual of the known odometer, including the case where the new eigenvalue is just an $h$-th root of unity.
	
	Also, by this interpretation, we see that if $h > 1$ then $h$ is forced to be coprime to all but finitely many to the $q_n$'s: if $p$ was a common prime factor between $h$ and infinitely many of the $q_n$'s, then the group $\hat{\mathbb{Z}}_{(q_n)}$ would be closed under taking $p$-th roots, meaning that any continuous eigenvalue $\lambda\in\hat{Z}$ is an $(h/p)$-th root of some eigenvalue in $\hat{\mathbb{Z}}_{(q_n)}$, and in particular $\hat{\mathbb{Z}}_{(q_n)}$ could be at most an index $h/p$ subgroup of $\hat{Z}$, a contradiction.
	In particular, if the odometer $\mathbb{Z}_{(q_n)}$ is torsion-free, then $h$ will be coprime to all $q_n$ and thus $Z = \mathbb{Z}_{(q_n)}\times \mathbb{Z}/h\mathbb{Z}$.
	\end{remark}

\begin{example}\label{ex:extension_not_product}
		Consider the following two substitutions:
	\begin{align*} 
		\theta: a &\mapsto  abaca   & \tau: a    &\mapsto  ab
		\\  b  &\mapsto  babac & b  &\mapsto  bc\\
		c  &\mapsto  cabab
		 & c  &\mapsto ac,
	\end{align*}
	and the directive sequence $\boldsymbol{ \vartheta}=(\tau,\theta,\theta,\dotsc)$, 
	 so that $X_{\boldsymbol{ \vartheta}}=\tau(X_\theta)\cup\sigma\circ\tau(X_\theta)$. As $\theta$ is primitive and aperiodic, and $\tau$ is injective on letters, this decomposition ensures that $X_{\boldsymbol{ \vartheta}}$ is infinite and aperiodic. Note that although $\boldsymbol{ \vartheta}$ is not torsion-free, it is quasi-recognizable. To see this,  first, as $\theta$ is aperiodic and thus recognizable, there exists a factor map $\pi_{\rm tile}\colon X^{(1)}\to\Z_5$. Also, we only see the letter $a$ at the start of a $\tau$-tile, so there must exist a factor $\pi_2\colon X^{(0)}\to \Z/2\Z$, which is determined by the position of the instance of $a$ closest to the origin. Since $\tau$ is injective on letters, any $x\in X^{(0)}$ has a unique preimage $x^{(1)}\in X^{(1)}$, so we may define $\pi\colon X^{(0)}\to\Z_5\times \Z/2\Z$ by:
		\[\pi(x) = \bigl(\pi_{\rm tile}(x^{(1)}),\pi_2(x)\bigr).\]
	
	Thus, $\pi:X^{(0)}\rightarrow \Z_5\times \Z/2\Z$ is a factor map.
	Also, the substitution $\theta$ has height $2$, so $-1$ is an eigenvalue of $X_\theta = X^{(1)}$, with one associated eigenfunction $f\colon X^{(1)}\to S^1$ being given by:
	\[f(x) = \begin{cases}
		1 & \text{if }x_0 = a, \\
		-1 & \text{otherwise.}
	\end{cases}\]
	This $f$ induces an eigenfunction $\tilde{f}\colon X^{(0)}=X_{\boldsymbol{ \vartheta}}\to S^1$, which is given by:
	\[\tilde{f}(x) = \begin{cases}		f(y) & \text{if }x = \tau(y), \\
		e^{\pi \ii  /2}
\cdot f(y) & \text{if }x = \sigma(\tau(y)). 
	\end{cases}\]
	The injectivity of $\tau$ and the equality $\tau\circ\sigma=\sigma^2\circ\tau$ ensure that $\tilde{f}$ is well-defined and an eigenfunction for $X_{\boldsymbol{ \vartheta}}$, with eigenvalue  $e^{\pi \ii  /2}$. We can use this to verify that the maximal equicontinuous factor of $X_{\boldsymbol{ \vartheta}}$ equals $\Z_5\times\Z/4\Z$, which is a finite extension of the tiling factor $\Z_5\times\Z/2\Z$ by $\Z/2\Z$ (consistent with Theorem~\ref{thm: mef-c-to-one}) but is not isomorphic to $(\Z_5\times\Z/2\Z)\times\Z/2\Z$, so it is not a product of the odometer $\Z_{(q_n)}$ with the identified cyclic group. Note also that this is an example of a shift that is a $2$-to-$1$ extension of the (equicontinuous) odometer $(\Z_5 \times \Z/2\Z,+(1,1))$, but the latter does not equal the former's maximal equicontinuous factor.
\end{example}

\begin{corollary}\label{cor:MEF-torsion-free}
Let $\boldsymbol{\theta}$ be a  torsion-free directive sequence defined on a  sequence of bounded alphabets.
  Then  the maximal equicontinuous factor of  $(X_{\boldsymbol{\theta}},\sigma)$ is  $\Z_{(q_n)} \times \Z/h\Z$ for some $h$ coprime to  each $q_n$. In particular each continuous eigenvalue  for $(X_{\boldsymbol{\theta}}, \sigma)$ is rational.	
\end{corollary}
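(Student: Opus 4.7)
The plan is to combine the quasi-recognisability provided by Theorem~\ref{thm:dekking-sadic} with the general structural result of Theorem~\ref{thm: mef-c-to-one}, and then upgrade the resulting finite extension to a direct product by exploiting the torsion-free hypothesis on $\boldsymbol{\theta}$ at the level of the odometer.

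First, I would invoke Theorem~\ref{thm:dekking-sadic} to obtain a tiling factor map $\pi_{\rm tile}\colon (X_{\boldsymbol{\theta}},\sigma) \to (\Z_{(q_n)},+1)$, normalised so that every two-sided limit word is sent to $0\in\Z_{(q_n)}$. By the last paragraph of the proof of that theorem,
\[ \pi_{\rm tile}^{-1}(0) = \bigcap_{n\geq 0}\theta^{[0,n)}(X^{(n)}), \]
so this fibre is exactly the set of two-sided limit words of $\boldsymbol{\theta}$ in the sense of Definition~\ref{def:limit-word}. To see that this fibre is finite, I would argue that given any limit word $u=\theta^{[0,n)}(u^{(n)})$, the central pair $u^{(n)}_{-1}u^{(n)}_{0}$ belongs to $\mathcal{L}^{(n)}$, and since the alphabets $\mathcal{A}_n$ are uniformly bounded in size, the pigeonhole principle provides a subsequence along which this pair equals a fixed essential word $ab$. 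After a further telescoping, as discussed at the end of Section~\ref{sec:preliminaries}, the suffixes of $\theta^{[0,n)}(a)$ and the prefixes of $\theta^{[0,n)}(b)$ stabilise, so $u=\lim_n \theta^{[0,n)}(a)\cdot\theta^{[0,n)}(b)$ is determined by $ab$. Thus $\lvert\pi_{\rm tile}^{-1}(0)\rvert$ is bounded by the finite number of essential pairs, $\pi_{\rm tile}$ is somewhere finite-to-one, and Theorem~\ref{thm: mef-c-to-one} applies to produce a maximal equicontinuous factor $(Z,+\beta)$ that is a group extension of $\Z_{(q_n)}$ by some finite cyclic group $\Z/h\Z$.

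To upgrade this extension to a direct product with $h$ coprime to every $q_n$, I would note that the torsion-free hypothesis on $\boldsymbol{\theta}$ forces $\sum_n \nu_p(q_n)\in\{0,\infty\}$ for every prime $p$. Under the standard decomposition $\Z_{(q_n)}\cong \prod_p \varprojlim_n \Z/p^{\nu_p(p_n)}\Z$, each $p$-component is either trivial or isomorphic to the $p$-adic integers $\mathbb{Z}_p$, so $\Z_{(q_n)}$ itself is torsion-free as a profinite group. Remark~\ref{rem:pontryagin_interpretation} then applies and yields both the coprimality of $h$ with every $q_n$ and the splitting $Z\cong \Z_{(q_n)}\times\Z/h\Z$. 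Rationality of the continuous eigenvalues follows immediately from Pontryagin duality: the group of continuous eigenvalues is $\hat{Z}\cong \widehat{\Z_{(q_n)}}\oplus\widehat{\Z/h\Z}$, which is generated by $p_n$-th and $h$-th roots of unity. The main subtlety I anticipate is the finiteness argument for $\pi_{\rm tile}^{-1}(0)$, where the boundedness of the alphabet sizes is essential; once that is in hand, the remaining steps are a direct orchestration of results already developed in the excerpt.
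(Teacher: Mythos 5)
Your proposal is correct and follows essentially the same route as the paper: obtain the tiling factor map from Theorem~\ref{thm:dekking-sadic}, observe that $\pi_{\rm tile}^{-1}(0)$ is the (finite) set of limit words, invoke Theorem~\ref{thm: mef-c-to-one}, and then use the torsion-freeness of $\Z_{(q_n)}$ together with Remark~\ref{rem:pontryagin_interpretation} to get the splitting $Z\cong\Z_{(q_n)}\times\Z/h\Z$ and the coprimality of $h$. The one place you diverge is the finiteness of the limit-word fibre, where the paper simply cites \cite[Lemma~5.13]{BSTY-2019}; your in-line pigeonhole sketch captures the right idea but implicitly assumes that an essential pair $ab$ determines the limit word uniquely, which is not immediate when the pairs $u^{(n)}_{-1}u^{(n)}_0$ occur along different subsequences for different limit words --- the cleaner version is to note that for two distinct limit words $u\ne v$ the pairs $u^{(n)}_{-1}u^{(n)}_0$ and $v^{(n)}_{-1}v^{(n)}_0$ must eventually differ for all large $n$ (since $\theta^{[0,n)}$ of these pairs gives $u$ and $v$ on $[-p_n,p_n)$), which bounds the number of limit words by $\sup_n\lvert\mathcal A_n\rvert^2$.
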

\begin{proof}
Let $\boldsymbol{\theta}$
have 
 length sequence $(q_n)$.
Since $\boldsymbol{\theta}$ is torsion-free, then by Theorem \ref{thm:dekking-sadic} we have an equicontinuous factor map $\pi_{\rm tile}\colon  X_{\boldsymbol{\theta}}\rightarrow \Z_{(q_n)}$ which maps limit words to $0$. The assumption that $q_n\geq 2$ infinitely often (by definition) tells us that $q_0 \cdots q_n \rightarrow \infty$, so by \cite[Lemma 5.13]{BSTY-2019}, there are finitely many limit words. Therefore the fibre $\pi_{\rm tile}^{-1}(0)$ is finite and we can apply Theorem \ref{thm: mef-c-to-one} to obtain the desired result.
 \end{proof}

\begin{remark}
We have stated Corollary \ref{cor:MEF-torsion-free} for torsion-free directive se\-quences, but we could also have stated it for a larger class of quasi-rec\-og\-niz\-a\-ble directive se\-quences. For example, given a quasi-rec\-og\-niz\-a\-ble constant-length directive sequence with length se\-quence $(q_n)$, if there exists $N$ such that $(q_n)_{n\geq N}$ is torsion-free, then the maximal equicontinuous factor of  $(X_{\boldsymbol{\theta}},\sigma)$ is  $\Z_{(q_n)_{n\geq N}} \times \Z/H\Z$ for some $H$ coprime to  each $q_n, n\ge N,$ where the $h$ of Theorem \ref{thm: mef-c-to-one} divides $H$.

Similarly, while  we give the following definition for torsion-free directive sequences, we can naturally extend it to the appropriate family of quasi-recognizable sequences.

\end{remark}

	\begin{definition}[Height]\label{def:dynamical-height}
	Let  $\boldsymbol{\theta}$ be a 
		 torsion-free directive sequence defined on a  sequence of bounded alphabets.
		 		 We call the $h=h({\boldsymbol \theta})$ guaranteed by Corollary \ref{cor:MEF-torsion-free}  the {\em height} of 
	 $\boldsymbol{\theta}$. If $h=1$, we say that $\boldsymbol{\theta}$ has {\em trivial} height.
	\end{definition}
	
	Note that this definition is consistent with \cite[Definition 6.7]{BCY-2022}. Note also that if $h$ is the height, then $\gamma(h)=h$, where $\gamma$ is defined before Lemma \ref{lem:partition}.
	 To avoid confusion, we remark that if  $\boldsymbol{\theta}$ is torsion-free, then its maximal equicontinuous factor is torsion-free {\em only} if $h=1$. 
	
	\subsection{Connection between height and coboundaries}
We connect our work to previous recent work \cite[Section 6]{BCY-2022}, where we associate to a con\-tin\-u\-ous eigenvalue a {\em coboundary}. As this commentary is simply to connect our work here to there, we do not include definitions, referring the reader to the aforementioned article for terminology. In that article, as the directive sequences were assumed finitary, telescoping was not always permitted, and there assumptions had to be made about the existence of a word of length 2 that belong to all languages $\mathcal L^{(n)}$.	As here we do not constrain the lengths of the morphisms to belong to a finite set, we may always telescope to obtain such words.  Also, there the results concerned {\em straight} directive sequences. But straightness can be obtained by telescoping, and  here also, as we can telescope arbitrarily, we can assume, without loss of generality, that our directive sequences are straight. Consequently, we can restate \cite[Theorem 6.6]{BCY-2022} as
\begin{theorem}\label{thm:fully-essential-constant-coboundarybis} 
Let $\boldsymbol{\theta} = (\theta^{(n)})_{n\ge0}$ be a torsion-free directive sequence defined on a  sequence of bounded alphabets, with length sequence $(q_n)_{n\geq 0}$.
   If  $\boldsymbol{\theta}$ has height $h$,
       then $\lambda$ is a continuous eigenvalue of $(X_{\boldsymbol \theta}, \sigma)$  if and only if
\begin{equation}\label{constant-coboundary-4} \lim_{n\rightarrow \infty}\lambda^{q_0 \cdots q_{n}}
\end{equation}
exists and is a constant coboundary which, if nontrivial, equals $\ee^{2 \pi \ii /\tilde{h}}$ with $\tilde{h}\mid h$.
 Furthermore $h$ divides $q_n -1$ for all $n$ large.\end{theorem}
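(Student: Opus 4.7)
The plan is to derive the iff statement from \cite[Theorem 6.6]{BCY-2022}, reconciled with the present hypotheses via the commentary preceding the theorem, and to extract the additional arithmetic information from Corollary \ref{cor:MEF-torsion-free}.

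First I would telescope $\boldsymbol \theta$ so that it becomes straight and so that some word of length $2$ is fully essential. This is permitted because we impose no finitary constraint on our morphisms, and telescoping preserves $X_{\boldsymbol \theta}$, its continuous eigenvalues and its height. Applying \cite[Theorem 6.6]{BCY-2022} to the telescoped sequence yields the equivalence between $\lambda$ being a continuous eigenvalue and the convergence of $\lambda^{p_{n_k}}$ to a constant coboundary along the telescoping subsequence. To lift this to the full sequence $(p_n)_n$, I would use quasi-recognizability (Theorem~\ref{thm:dekking-sadic}): for any limit word $u$ and continuous eigenfunction $f$ for $\lambda$, the orbit point $\sigma^{p_n}(u)$ stays in the fibre $\pi_{\rm tile}^{-1}(0)$, and by minimality $\sigma^{p_n}(u) \to u$, forcing $\lambda^{p_n} = f(\sigma^{p_n}u)/f(u)$ to converge.

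For the value of the coboundary, Corollary~\ref{cor:MEF-torsion-free} gives the MEF as $\Z_{(q_n)}\times \Z/h\Z$ with $\gcd(h,q_n)=1$ for every $n$, so its Pontryagin dual splits inside $S^1$ as $\hat{Z} = \hat{\Z}_{(q_n)} \oplus \langle \ee^{2\pi \ii/h}\rangle$. Writing a continuous eigenvalue as $\lambda = \mu\eta$ with $\mu \in \hat{\Z}_{(q_n)}$ and $\eta \in \langle \ee^{2\pi \ii/h}\rangle$, one has $\mu^{p_n}\to 1$ because $p_n \to 0$ in $\Z_{(q_n)}$, so $\zeta := \lim_n \lambda^{p_n} = \lim_n \eta^{p_n}$ takes values in the finite cyclic group of $h$-th roots of unity. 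Writing $\zeta = \ee^{2\pi \ii k/h}$ and setting $\tilde h = h/\gcd(k,h)$ then gives $\zeta = \ee^{2\pi \ii/\tilde h}$ with $\tilde h \mid h$, as claimed.

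For the arithmetic relation $h\mid q_n -1$, observe that $\ee^{2\pi \ii/h}\in \hat Z$ is itself a continuous eigenvalue, so by the iff statement the limit $\lim_n \ee^{2\pi \ii p_n/h}$ exists; equivalently, $p_n \bmod h$ stabilises to some residue $r$ for all $n$ sufficiently large. Since $\gcd(h,q_m)=1$ for every $m$, we have $\gcd(r,h)=1$, and the recursion $p_{n+1} = q_{n+1}p_n$ yields $(q_{n+1}-1)r \equiv 0 \pmod h$, so $h\mid q_{n+1}-1$ for every large $n$. The main obstacle will be the careful passage from the telescoped subsequence back to the full sequence $(p_n)_n$; the remainder is routine arithmetic in $\hat Z$.
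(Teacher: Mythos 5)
Your approach matches the paper's in its main lines: telescope so that $\boldsymbol\theta$ becomes straight with a fully essential length-$2$ word, delegate the ``iff'' statement to \cite[Theorem 6.6]{BCY-2022}, deduce the possible values of the coboundary from the structure of the maximal equicontinuous factor (Theorem~\ref{thm: mef-c-to-one}/Corollary~\ref{cor:MEF-torsion-free}), and then derive $h\mid q_n-1$ arithmetically from $\lambda^{p_{n+1}}/\lambda^{p_n}\to 1$ applied to $\lambda=\ee^{2\pi\ii/h}$. Where you diverge, and where there is a genuine gap, is the ``lift'' from the telescoped subsequence $(p_{n_k})$ back to the full sequence $(p_n)$.

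Two things go wrong there. First, $\sigma^{p_n}(u)$ does \emph{not} lie in $\pi_{\rm tile}^{-1}(0)$: since $\pi_{\rm tile}(u)=0$, we have $\pi_{\rm tile}(\sigma^{p_n}u)=p_n$, which tends to $0$ in $\Z_{(q_n)}$ but is not $0$. Second, even granting that $\sigma^{p_n}(u)$ accumulates on the (finite) fibre $\pi_{\rm tile}^{-1}(0)$, minimality gives a dense orbit, not $\sigma^{p_n}(u)\to u$; the accumulation set may contain several distinct limit words on which $f$ takes distinct values, so $\lambda^{p_n}=f(\sigma^{p_n}u)/f(u)$ can oscillate. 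In fact such a lift \emph{cannot} work in general: take the Thue--Morse substitution decorated by a cyclic $\Z/3\Z$-labelling, i.e.\ $\theta(a_j)=a_{2j}b_{2j+1}$, $\theta(b_j)=b_{2j}a_{2j+1}$ (indices mod $3$). This is a torsion-free stationary sequence with $q_n\equiv 2$, the alphabet partition makes $\ee^{2\pi\ii/3}$ a continuous eigenvalue so $h=3$, yet $\lambda^{2^n}=\ee^{2\pi\ii 2^n/3}$ oscillates between $\ee^{2\pi\ii/3}$ and $\ee^{4\pi\ii/3}$. The resolution, implicit in the commentary directly before the theorem, is that the statement is to be read after the telescoping has been performed (making the sequence straight, so that indeed $q_n\equiv 1\pmod h$ eventually); the theorem is a restatement of \cite[Theorem 6.6]{BCY-2022} for the telescoped $(q_n)$, and no lift back to the pre-telescoping lengths is intended or possible. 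Removing your lift paragraph and applying the remainder of the argument to the telescoped sequence fixes the proof. A smaller point: writing $\zeta=\ee^{2\pi\ii k/h}$ and $\tilde h = h/\gcd(k,h)$ shows only that $\zeta$ is a \emph{primitive} $\tilde h$-th root of unity, not that $\zeta=\ee^{2\pi\ii/\tilde h}$; but this imprecision is shared by the theorem's wording.
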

 \begin{proof}
The proof that  the limit \eqref{constant-coboundary-4}  exists and defines a coboundary follows the same lines as the proof in \cite[Theorem 6.6]{BCY-2022}. By Theorem \ref{thm: mef-c-to-one}
we have that the limit in \eqref{constant-coboundary-4} must either equal 1, or $\ee^{2\pi \ii/\tilde{h}}$ where $\tilde h \mid h$. To see the last statement, from existence of \eqref{constant-coboundary-4} we conclude that 
\begin{equation*} \lim_{n\rightarrow \infty}\ee^{2\pi \ii {q_0 \cdots q_{n} (q_{n+1} -1)}/h}=1,\end{equation*}
and since $h$ is coprime to each $q_n$, the result follows.
 \end{proof}
From Theorem \ref{thm:fully-essential-constant-coboundarybis} we can extend \cite[Theorem 6.9]{BCY-2022}
 to obtain  a strength\-ened version of Cobham's theorem.  
 \begin{corollary}\label{cor:cobham}
Let $\boldsymbol{\theta} = (\theta^{(n)})_{n\ge0}$  and $\boldsymbol{\tau} = (\tau^{(n)})_{n\ge0}$ be two  torsion-free  constant-length directive sequences,
defined on a sequence of bounded alphabets,
 with length sequences $(q_n)_{n\geq 0}$ and
$(\tilde{q}_n)_{n\geq 0}$. 
If there is a prime factor of  some $q\in  \{q_n:n\geq 0\}$ that  is not a prime factor of any $\tilde{q}\in \{\tilde{q}_n:n\geq 0 \}$, then $(X_{\boldsymbol{ \theta}},\sigma)$ cannot be a topological factor of $(X_{\boldsymbol{ \tau}},\sigma)$.
\end{corollary}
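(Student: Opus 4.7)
My plan is to argue by contradiction, leveraging the fact that topological factor maps preserve continuous eigenvalues. Suppose $(X_{\boldsymbol{\theta}},\sigma)$ is a topological factor of $(X_{\boldsymbol{\tau}},\sigma)$ via some map $\pi$. If $f \circ \sigma = \lambda f$ on $X_{\boldsymbol{\theta}}$, then $f \circ \pi$ is a continuous eigenfunction of $(X_{\boldsymbol{\tau}},\sigma)$ for the same eigenvalue. Thus every continuous eigenvalue of $(X_{\boldsymbol{\theta}},\sigma)$ is also a continuous eigenvalue of $(X_{\boldsymbol{\tau}},\sigma)$, and my goal will be to exhibit an eigenvalue of the former that cannot be an eigenvalue of the latter.

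The next step is to produce, from the chosen prime $p$, an unbounded family of $p$-power eigenvalues of $(X_{\boldsymbol{\theta}},\sigma)$. Since $\boldsymbol{\theta}$ is torsion-free and $p$ divides some $q_k$, $p$ in fact divides $q_n$ for infinitely many $n$, so $\nu_p(p_n^{\boldsymbol{\theta}}) \to \infty$ where $p_n^{\boldsymbol{\theta}} \coloneqq q_0 \cdots q_{n-1}$. By Corollary~\ref{cor:MEF-torsion-free}, the maximal equicontinuous factor of $(X_{\boldsymbol{\theta}},\sigma)$ is $\Z_{(q_n)} \times \Z/h_{\boldsymbol{\theta}}\Z$, so its dual realises every $p_n^{\boldsymbol{\theta}}$-th root of unity as a continuous eigenvalue. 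In particular, $\lambda_N \coloneqq \ee^{2\pi \ii/p^N}$ is a continuous eigenvalue of $(X_{\boldsymbol{\theta}},\sigma)$ for every $N \ge 1$, and hence of $(X_{\boldsymbol{\tau}},\sigma)$ by the pullback observation above.

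To reach a contradiction I then apply Theorem~\ref{thm:fully-essential-constant-coboundarybis} to $\boldsymbol{\tau}$ with $\lambda = \lambda_N$: the limit $\lim_{n\to\infty} \ee^{2\pi \ii \tilde p_n / p^N}$ must exist and equal $\ee^{2\pi \ii/\tilde h_N}$ for some $\tilde h_N \mid h_{\boldsymbol{\tau}}$, where I take $\tilde h_N = 1$ in the trivial case. Since $p \nmid \tilde q_n$ for any $n$, the residue $\tilde p_n \bmod p^N$ is always a unit, so its stabilised value $k$ is coprime to $p$. The equality $\ee^{2\pi \ii k/p^N} = \ee^{2\pi \ii/\tilde h_N}$ forces $p^N \mid k \tilde h_N$, and $\gcd(k,p) = 1$ then yields $p^N \mid \tilde h_N \le h_{\boldsymbol{\tau}}$. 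Choosing $N$ with $p^N > h_{\boldsymbol{\tau}}$ delivers the contradiction. The main subtlety is that the existence of the limit in Theorem~\ref{thm:fully-essential-constant-coboundarybis} and the extraction of the divisibility $p^N \mid \tilde h_N$ both depend on the fact that $p$ is entirely absent from the length sequence of $\boldsymbol{\tau}$; this is exactly where the hypothesis on primes enters the argument decisively.
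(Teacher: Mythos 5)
Your argument is correct and follows the route the paper intends (the paper itself gives no explicit proof, instead deferring to Theorem~\ref{thm:fully-essential-constant-coboundarybis} and citing an analogue of \cite[Theorem~6.9]{BCY-2022}). You pull back eigenvalues along the factor map, produce from the torsion-free hypothesis unboundedly many $p$-power eigenvalues $\ee^{2\pi\ii/p^N}$ of $(X_{\boldsymbol{\theta}},\sigma)$, and then use the coboundary characterisation plus $p\nmid\tilde{q}_n$ to force $p^N\mid\tilde{h}_N\le h_{\boldsymbol{\tau}}$ for all $N$, which is impossible. All the arithmetic checks out; in fact at the key step you could argue slightly more sharply by comparing orders of roots of unity ($\ee^{2\pi\ii k/p^N}$ has order exactly $p^N$ since $\gcd(k,p)=1$, giving $p^N=\tilde{h}_N$), but the divisibility form you wrote already suffices. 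One could also bypass Theorem~\ref{thm:fully-essential-constant-coboundarybis} entirely and argue straight from Corollary~\ref{cor:MEF-torsion-free}, since the dual of $\Z_{(\tilde q_n)}\times\Z/h_{\boldsymbol{\tau}}\Z$ has bounded $p$-torsion when $p$ is absent from the $\tilde q_n$; this is essentially the same computation phrased group-theoretically rather than via limits.
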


\begin{example}
	Let $\boldsymbol{ \theta} = (\theta^{(j)})_{j\ge 0}$ be any directive sequence with lengths given by $q_j=(j+2)!$ and such that $ X_{\boldsymbol{\theta}}^{(0)}  $ is aperiodic; for instance, one may take the directive sequence given by:
\begin{align*}
	\theta^{(j)}\colon 0 &\mapsto 0^{(j+2)!-1}1 \\
	1 &\mapsto 1^{(j+2)!-1}0,
\end{align*}
where $q_j=(j+2)!$. It is easy to see that this is a torsion-free directive sequence, as $p\mid q_j$ implies $p\mid q_{j'}$ for any $j'>j$.

The odometer $\Z_{(q_n)}$ given by the length sequence $(q_n)$ is isomorphic to the product $\Omega\coloneqq \prod_{p\text{ prime}}\Z_p$ of every $p$-adic odometer, for prime $p$; this is often called the \emph{universal odometer} \cite{Downarowicz2005}, as any odometer is a factor of $\Omega$. The maximal equicontinuous factor of $(X_{\boldsymbol{ \theta}}, \sigma)$  is then necessarily at most a finite extension of $\Omega$ 
 by a group of order $h$, the dynamical height.

As observed in Remark~\ref{rem:pontryagin_interpretation}, $h$ must be coprime to all numbers $(n+2)!$, which is only possible if $h=1$. Thus, $\Omega$ is  the maximal equicontinuous factor of $X_{\boldsymbol{ \theta}}$. An alternative interpretation is that every rational in $[0,1)$ is already an additive eigenvalue of the system, so we cannot add any new eigenvalue.

\end{example}

\begin{example}\label{ex:Durand}
		Consider  the substitutions  $S=\{\theta,  \tau\}$ with
	\begin{align*}
		\theta\colon a   &\mapsto acb    &\tau\colon  a &   \mapsto  abc
		\\  b  &\mapsto  bab  & b&  \mapsto acb\\
		 c  &\mapsto  cbc
		& c  & \mapsto aac.
	\end{align*}
	These two substitutions were defined by Durand in \cite{Durand-2003}, where he took a specific directive sequence on $\{ \theta, \tau\}$ to produce a finitary strongly primitive constant-length directive sequence whose associated shift  is minimal, but not linearly recurrent. Here we consider any  directive sequence taking values from 	$\{ \theta, \tau\}$. As shown in \cite[Example 1]{BCY-2022}, any directive sequence is rec\-og\-niz\-a\-ble and hence $ X_{\boldsymbol{\theta}}^{(0)}  $ is aperiodic; (alternatively, it is injective and torsion free, so recognizable). It can also be
	 verified that it is primitive.
	 Thus by Corollary \ref{cor:MEF-torsion-free} we conclude that for each directive sequence the maximal equicontinuous factor of the corresponding shift is $(\Z_{(q_n)} \times \Z/h\Z, (+1,+1))$ where $h$ may depend on ${\boldsymbol \theta}$. We argue that $h$ always equals 1. One can show that no matter the selected directive sequence, one can telescope so that there is a word $\alpha\alpha\in \mathcal L^{(n)}$ for each $n$. If $\ee^{2 \pi \ii /h}$ is an eigenvalue, then the existence of this word allows us to conclude that  $\ee^{2\pi \ii p_n/h}\rightarrow 1$. Since $h$ is coprime to each $p_n$, this forces $h=1$.

\end{example}

\begin{example}\label{ex:nontrivial height}
This is a modification of \cite[Example 6.10]{BCY-2022}.
Take any finite set $\mathcal Q$ of odd numbers, and let $\mathcal S$ be any set of constant-length substitutions on $\mathcal A=\{a,b,c,d \}$ where
\begin{enumerate}
\item   for  each  $\alpha \in \{a,b\}$ and each $\theta\in \mathcal S$,  $\theta(\alpha)$ starts  with a letter in $\{a,b\}$ and similarly  for  each  $\alpha \in \{c,d\}$,  $\theta(\alpha)$ starts  with a letter in $\{c,d\}$,
 \item
for each $\theta\in \mathcal S$, any occurrence of  a letter in $\{a, b\}$ in  the image of letter by a  substitution is always followed by a letter in $\{c, d\}$, and any occurrence of a letter in $\{c, d\}$ is always followed by a letter  $\{a, b\}$, and
\item each substitution in $\mathcal S$ has length belonging to $\mathcal Q$.
\end{enumerate}
Then we claim that $-1$ is a continuous eigenvalue for any primitive directive sequence where any substitution in $\mathcal S$ that appears in $\boldsymbol \theta$ appears infinitely often, and where $ X_{\boldsymbol{\theta}}^{(0)}  $ is aperiodic. The previous assumptions imply  that $\boldsymbol \theta$ is torsion-free, so that Corollary \ref{cor:MEF-torsion-free} applies.
Also, the three conditions above ensure that $-1$ is an eigenvalue, as
  \[ \mathcal P = \{ [a]\cup[b], [c]\cup [d]\}\]
  is then a clopen partition which forms a Rokhlin tower of height 2, so that $2\mid h$. Note that Corollary \ref{cor:alphabet_partition} tells us that this kind of example is essentially the only way that height can manifest.
  \end{example}

		\subsection{The pure base of a torsion-free directive sequence}\label{subsec:pure-base}
		In this section we assume that  the constant-length directive sequence is defined 
		 on a se\-quence of bounded alphabets, but for ease of notation we will give proofs for the case when $\boldsymbol{\theta}$ is defined
		on  $\mathcal A$. We assume that 
		 it is torsion-free, so that it is quasi-recognizable
				 by Theorem \ref{thm:dekking-sadic}.
		  If $\boldsymbol{\theta}$  has height $h>1$ we would like to define a {\em pure base} in a manner analogous to that defined by Dekking for constant-length substitutions; 
		let us recall how to define the pure base of a length-$\ell$ substitution.
				Given a primitive length-$\ell$ substitution $\theta\colon \mathcal A\rightarrow \mathcal A^{\ell}$,  a fixed point $u$, and some $h\in \N$,   consider the set of words
		\[  \mathcal W:=\{ u_{kh} \cdots u_{(k+1)h -1 }: k\in \N\};\] 
		it is finite with cardinality $c$. Define an alphabet $\mathcal B:= \{b_1,\dots b_c\}$ such that  each letter in $\mathcal B$  codes a distinct word in the above set.
				Let $\bar \tau \colon \mathcal B\rightarrow \mathcal W$ be the natural map which assigns to a letter in $\mathcal B$ the word in $\mathcal W$ which it codes. Conversely, 
				let $\tau: \mathcal W\rightarrow \mathcal B$ be the inverse of $\bar \tau$.
		The map $\tau$ extends by concatenation to words on $\mathcal W$. 		Define
		 $\bar \theta \colon \mathcal B\rightarrow \mathcal B^{\ell}$ as 
		\begin{equation}\label{def:pure-base-subs}  \bar\theta(b)  = \tau(\theta(\bar \tau(b)));\end{equation}
		$\bar\theta$ gives  what is known as a {\em $h$-th higher power shift presentation  of  $X_{\theta}$}, and it is called 		
		 a {\em pure base} of $\theta$.
		 In fact, as shown in \cite[Remark 9, Lemmas 17 and 19]{Dekking1977},
$(X_\theta,\sigma)$ is conjugate to
a constant height suspension over $(X_{\bar\theta}, \sigma)$. If the height of the stationary directive sequence $(\theta, \theta,\dotsc)$ is $h$, then this suspension has height $h$.

We continue with the notation above, namely the maps $\tau$ and $\bar\tau$.

\begin{definition}[Pure base] Let 
$\boldsymbol{\theta}$ be a constant-length $(q_n)$ directive sequence on $\mathcal A$. Let $(u^{(k)})_{k\geq 0}$ be such that $\theta^{(k)}(u^{(k+1)})=u^{(k)}$ for each
	$k$, and
	 let $\mathcal W^{(n)}$ be the set of words of length $h$ that appears at the  indices $\{kh: k\geq 0\}$ in $u^{(n)}$. Code  $\mathcal W^{(n)}$ with an alphabet
	 $\mathcal B^{(n)} $ of cardinality $|\mathcal W^{(n)}|$.
	  Let $\bar \tau_n\colon \mathcal B^{(n)}\rightarrow \mathcal A^{h}$ be the natural map which associates to a letter in $\mathcal B^{(n)}$ its representative
	   in $\mathcal W^{(n)}$. Conversely, if $w\in \mathcal W^{(n)}$,  let $\tau_n:\mathcal W^{(n)}\rightarrow \mathcal B^{(n)}$ be the inverse of $\bar\tau_n$.  The map $\tau_n$ extends to concatenations of words over $\mathcal W^{(n)}$, and similarly the map $\bar \tau_n$ extends to concatenations of letters.
		Define
		 ${\bar \theta}^{(n)} \colon \mathcal B^{(n+1)}\rightarrow (\mathcal B^{(n)})^{q_n}$ as 
		\begin{equation}\label{def:pure-base}  {\bar\theta}^{(n)}(b)  = \tau_n(\theta^{(n)}({\bar \tau}_{n+1}(b))).\end{equation}
		Then we call the directive sequence ${\boldsymbol{\bar\theta}}=( {\bar \theta}^{(n)})_{n\geq 0}$  the {\em pure base } of ${\boldsymbol \theta}$.  \end{definition}
		We shall see in Theorem \ref{thm:height} that if ${\boldsymbol \theta}$ has height $h>1$,  this construction will
		give  a $h$-th higher power shift of    $(X_{\boldsymbol \theta},\sigma)$.

\begin{lemma}\label{lem:primitive}
If ${\boldsymbol \theta}$ is a  constant-length directive sequence on $\mathcal A$ such that each $(X_{\boldsymbol {\theta}}^{(n)},\sigma)$ is minimal and  $ X_{\boldsymbol{\theta}}^{(0)}  $ is aperiodic,
then each $(X_{\boldsymbol {\bar\theta}}^{(n)},\bar\sigma)$ is min\-i\-mal.
	\end{lemma}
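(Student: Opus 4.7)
The strategy is to realise $(X_{\boldsymbol{\bar\theta}}^{(n)}, \bar\sigma)$ as a topologically conjugate copy of a closed $\sigma^h$-invariant subset of $(X_{\boldsymbol{\theta}}^{(n)}, \sigma^h)$ and derive its minimality from the $\sigma$-minimality of the parent together with aperiodicity.

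First I would extend the block coding $\bar\tau_n$ to a continuous map $\bar\tau_n\colon X_{\boldsymbol{\bar\theta}}^{(n)} \to \mathcal{A}^{\mathbb{Z}}$ by writing $\bar\tau_n(y_i) \in \mathcal{W}^{(n)} \subseteq \mathcal{A}^h$ at positions $[ih,(i+1)h)$ for each $i \in \mathbb{Z}$. The identity $\bar\tau_n \circ \bar\theta^{(n)} = \theta^{(n)} \circ \bar\tau_{n+1}$ encoded in \eqref{def:pure-base} ensures that the image of any word of $\bar{\mathcal L}^{(n)}_{\boldsymbol{\bar\theta}}$ is a subword of some $\theta^{[n,N)}(c) \in \bar{\mathcal L}^{(n)}$, so $Y^{(n)} := \bar\tau_n(X_{\boldsymbol{\bar\theta}}^{(n)}) \subseteq X_{\boldsymbol{\theta}}^{(n)}$. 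Since distinct letters of $\mathcal B^{(n)}$ encode distinct words of $\mathcal W^{(n)}$, the map $\bar\tau_n$ is injective, hence a homeomorphism onto its image, and it satisfies $\bar\tau_n \circ \bar\sigma = \sigma^h \circ \bar\tau_n$. Minimality of $(X_{\boldsymbol{\bar\theta}}^{(n)}, \bar\sigma)$ is therefore equivalent to minimality of $(Y^{(n)}, \sigma^h|_{Y^{(n)}})$.

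For the latter I fix $y, z \in Y^{(n)}$ and aim to bring some $\sigma^{hk} y$ arbitrarily close to $z$. By $\sigma$-minimality of $X_{\boldsymbol{\theta}}^{(n)}$, the set $\{m \in \mathbb Z : d(\sigma^m y, z) < \varepsilon\}$ is syndetic, hence non-empty, for every $\varepsilon > 0$. The key claim is that once $\varepsilon$ is small enough to force $\sigma^m y$ to agree with $z$ on a central window of length at least $3h$, the integer $m$ must be a multiple of $h$: such a window of $\sigma^m y$ would then simultaneously admit an aligned $\mathcal W^{(n)}$-decomposition at phase $0$, inherited from $z \in Y^{(n)}$, and at phase $-m \bmod h$, inherited from $y \in Y^{(n)}$ via the shift, so that $m \equiv 0 \pmod h$ follows once one rules out the coexistence of such decompositions at distinct phases. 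Taking $k = m/h$ then yields the required approximation, and conjugating back via $\bar\tau_n$ proves minimality.

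The main obstacle is precisely this ruling-out. If some $v \in \bar{\mathcal L}^{(n)}$ admitted $\mathcal W^{(n)}$-decompositions at phases $0$ and $r$ with $1 \le r \le h-1$, iterating the phase shift within the subgroup of $\mathbb Z/h\mathbb Z$ generated by $r$ produces compatible $\mathcal W^{(n)}$-decompositions at every offset that is a multiple of $d := \gcd(r,h) < h$, and the resulting overlapping length-$h$ tiles pin down $v$ to be $d$-periodic. Applying this to windows of diverging length yields arbitrarily long $d$-periodic subwords occurring in $X_{\boldsymbol{\theta}}^{(n)}$; a compactness argument then produces an actual $d$-periodic point in $X_{\boldsymbol{\theta}}^{(n)}$, contradicting the aperiodicity of $X_{\boldsymbol{\theta}}^{(n)}$, which is inherited from that of $X_{\boldsymbol{\theta}}^{(0)}$ since a shift-periodic $x \in X_{\boldsymbol{\theta}}^{(n)}$ would push down under $\theta^{[0,n)}$ to a shift-periodic point of $X_{\boldsymbol{\theta}}^{(0)}$.
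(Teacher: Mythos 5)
Your high-level strategy mirrors the paper's: realise $(X^{(n)}_{\boldsymbol{\bar\theta}},\bar\sigma)$ via $\bar\tau_n$ as a $\sigma^h$-conjugate subsystem of $X^{(n)}_{\boldsymbol{\theta}}$ and transfer minimality. But the critical ``ruling-out'' step in your third paragraph has a genuine gap.

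You claim that if a word $v$ admits $\mathcal W^{(n)}$-decompositions at two distinct phases $0$ and $r$, then ``iterating the phase shift'' yields decompositions at all multiples of $d=\gcd(r,h)$ and forces $v$ to be $d$-periodic. Neither assertion is justified, and both can fail for abstract collections $\mathcal W$ of length-$h$ words. For instance, take $h=2$ and $\mathcal W=\{ab,cd,ef,bc,de\}$ (with $a,\dots,f$ distinct letters): the word $v=abcdef$, of length $3h$, has a phase-$0$ decomposition $ab\,|\,cd\,|\,ef$ and a phase-$1$ decomposition $bc\,|\,de$, yet $v$ is neither $1$-periodic nor $2$-periodic, and there is nothing to ``iterate'' since there are only two phases. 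So aperiodicity of $X^{(n)}_{\boldsymbol{\theta}}$ alone cannot close the argument.

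The missing ingredient — the one the paper uses precisely at this point — is that the cyclic $\sigma^h$-minimal partition of $X^{(n)}_{\boldsymbol{\theta}}$ into $h$ pieces (which exists because $\gamma(h)=h$ when $h$ is the height) consists of \emph{clopen} sets. Each piece is then a finite union of cylinders, so there is a length $L$ such that every word of $\mathcal L^{(n)}$ longer than $L$ occurs only at positions in a single residue class modulo $h$. That observation forces your $m\equiv 0\pmod{h}$ directly once the agreement window exceeds $L$, with no appeal to periodicity; it is exactly what makes $\mathcal W^{(n)}$-tilings of long windows rigid. The paper exploits it by noting that the $\bar\tau_n$-preimages of words in $\bar u^{(n)}$ recur uniformly at $h$-multiple positions of $u^{(n)}$, hence words of $\bar u^{(n)}$ are uniformly recurrent and minimality follows. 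Without the clopen $\sigma^h$-partition the conclusion you want is false in general, so this is not a cosmetic omission but the crux of the lemma.
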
	
	
	\begin{proof} We show that the one-sided shift $(\tilde{X}_{\boldsymbol {\bar\theta}}^{(n)},\tilde{\sigma})$ is minimal; this implies that  the two-sided shift   $(X_{\boldsymbol {\bar\theta}}^{(n)},\bar\sigma )$  is minimal.
		Suppose that the limit word sequence $(u^{(n)})_{n\geq 0}$ is used to define  ${\boldsymbol {\bar\theta}}$. To show that 
		each $(\tilde{X}_{\boldsymbol {\bar\theta}}^{(n)},\tilde{\sigma})$ is minimal
			we first construct a  sequence $(\bar {u}^{(n)})_{n\geq 0}$ such that ${\bar \tau}_n( \bar {u}^{(n)} ) =  u^{(n)}$. Define $\bar u = {\bar u}^{(0)}$ as the unique sequence such that 
${\bar \tau}_0({\bar u}^{(0)})=u$.
Next, given $n\geq 0$ and knowledge of $u_0^{(n)}\cdots u_{q_n(k+1)h-1}^{(n)}$, define the unique word $ \bar{u}_0^{(n+1)} \cdots \bar{u}_{k}^{(n+1)}\in \mathcal L^{(n+1)}_{  \boldsymbol {\bar\theta}}$ which satisfies 
\[\theta^{(n)} {\bar \tau}_{n+1} ( \bar{u}_0^{(n+1)} \cdots \bar{u}_{k}^{(n+1)}) = u_0^{(n)}\cdots u_{q_{n}(k+1)h-1}^{(n)}. \]
Then for each $n$, the sequence of nested words $(\bar{u}_0^{(n)} \cdots \bar{u}_{k}^{(n)})_{k\geq 0}$ converges to  a sequence $\bar u^{(n)}$ and
\begin{align*} {\bar \theta}^{(n)} (        \bar u^{(n+1)}  )&=  {\bar \theta}^{(n)} (  \lim_{k\rightarrow \infty}      {\bar u}^{(n+1)}_0 \cdots   {\bar u}^{(n+1)}_k  ) = 
 \tau_n {\theta}^{(n)}    \bar \tau_{n+1}(  \lim_{k\rightarrow \infty}      {\bar u}^{(n+1)}_0 \cdots   {\bar u}^{(n+1)}_k  ) \\& =   \tau_n    \left(  \lim_{k\rightarrow \infty}u_0^{(n)}\cdots u_{q_{n}(k+1)h-1}^{(n)} \right)  = \bar u^{(n)},               \end{align*}
 where the last step follows by an inductive argument. See Figure~\ref{fig:inductive-pure-base}. Now minimality implies that for  each $n$, if  $u^{(n)}$ contains a word, then this word appears uniformly recurrently in $u^{(n)}$. Furthermore, as the cyclic $\sigma^h$-minimal partition consists of clopen sets, then if a word is long enough, it only appears at indices that are congruent to a fixed $i\bmod h$. Therefore, if $w$ is a sufficiently long word which occurs at an index congruent to a fixed $i\bmod h$, then it  occurs
 uniformly recurrently at an index congruent to a fixed $i\bmod h$, and its image under  $\tau_n$ occurs uniformly recurrently in ${\bar u}^{(n)}$. Thus 
  any word that appears in ${\bar u}^{(n)}$ must also appear uniformly recurrently. The result follows.
	\end{proof}
\begin{figure} 
	\includegraphics{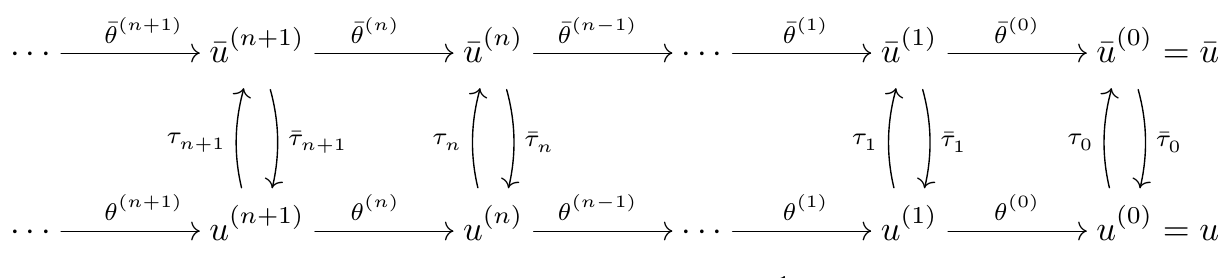}
	\caption{Using the maps $(\tau_n)$ and $(\bar \tau_n)$ to transfer a ${\boldsymbol \theta}$-limit word to a ${\boldsymbol {\bar \theta}}$-limit word}
	\label{fig:inductive-pure-base}
\end{figure}
	
	\begin{theorem}\label{thm:height}
If ${\boldsymbol \theta}$  is a torsion free directive sequence defined on 
$\mathcal A$, of height  $h$, then
 the pure base $ \boldsymbol{ \bar\theta}$ is torsion-free and  has trivial height, and  
$(X_{\boldsymbol\theta}, \sigma) 
\cong(X_{ \boldsymbol{ \bar  \theta}}\times \{0,\ldots,h-1\},T)$ where

\begin{equation*}
 T(x,i)\coloneqq 
\begin{cases}
  (x,i+1)      & \text{ if  } 
0\leq  i<h-1    \\
    (\bar \sigma(x), 0)   & \text{ if } i=h-1
\end{cases}
\end{equation*} 

\end{theorem}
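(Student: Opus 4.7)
The plan is to exhibit the claimed conjugacy via the height-induced factor map of $(X_{\boldsymbol{\theta}},\sigma)$ to $\Z/h\Z$, and then deduce torsion-freeness and triviality of height of the pure base as consequences. By Theorem~\ref{thm:dekking-sadic} together with Corollary~\ref{cor:MEF-torsion-free}, the maximal equicontinuous factor of $(X_{\boldsymbol{\theta}},\sigma)$ is $\Z_{(q_n)} \times \Z/h\Z$ with $h$ coprime to each $q_n$. Let $\pi_h \colon X_{\boldsymbol{\theta}} \to \Z/h\Z$ denote the projection to the second component, normalised so that $\pi_h(u) = 0$, where $u = u^{(0)}$ is the limit word used to define $\boldsymbol{\bar\theta}$. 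Then the preimages $Y_i \coloneqq \pi_h^{-1}(i)$ are clopen, satisfy $\sigma(Y_i) = Y_{(i+1)\bmod h}$, form an $h$-fold cyclic partition of $X_{\boldsymbol{\theta}}$, and each $(Y_i,\sigma^h)$ is minimal.

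The next step is to show that the letter-to-word map $\bar\tau_0 \colon \mathcal{B}^{(0)} \to \mathcal A^h$, extended by concatenation to a map on sequences, defines a topological conjugacy $(X_{\boldsymbol{\bar\theta}},\bar\sigma) \to (Y_0,\sigma^h)$. Continuity and injectivity on sequences follow from injectivity on letters, and the identity $\bar\tau_0 \circ \bar\sigma = \sigma^h \circ \bar\tau_0$ is immediate from the length-$h$ block structure of $\bar\tau_0(\bar x)$. The image $\bar\tau_0(X_{\boldsymbol{\bar\theta}})$ is closed and $\sigma^h$-invariant, and contains $u = \bar\tau_0(\bar u)$, where $\bar u$ is the limit word produced in the proof of Lemma~\ref{lem:primitive}; since $u \in Y_0$ and $(Y_0,\sigma^h)$ is minimal, we conclude $\bar\tau_0(X_{\boldsymbol{\bar\theta}}) = Y_0$, and compactness of $X_{\boldsymbol{\bar\theta}}$ upgrades $\bar\tau_0$ to a homeomorphism. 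I then define $\Phi \colon X_{\boldsymbol{\bar\theta}} \times \{0,\ldots,h-1\} \to X_{\boldsymbol{\theta}}$ by $\Phi(\bar x, i) \coloneqq \sigma^i(\bar\tau_0(\bar x))$; the partition $X_{\boldsymbol{\theta}} = \bigsqcup_i Y_i$ makes $\Phi$ a continuous bijection, and a routine verification using $\bar\tau_0 \circ \bar\sigma = \sigma^h \circ \bar\tau_0$ yields $\Phi \circ T = \sigma \circ \Phi$, establishing the claimed conjugacy.

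For $\boldsymbol{\bar\theta}$ to be torsion-free: minimality of each $(X_{\boldsymbol{\bar\theta}}^{(n)}, \bar\sigma)$ is Lemma~\ref{lem:primitive}, aperiodicity is immediate since a $\bar\sigma$-periodic $\bar x$ would map under $\bar\tau_0$ to a $\sigma$-periodic point of $X_{\boldsymbol{\theta}}$, and the length sequence of $\boldsymbol{\bar\theta}$ coincides with that of $\boldsymbol{\theta}$, so the prime-divisibility condition is inherited verbatim. For trivial height, I would read off the continuous spectrum from the suspension structure via the standard fact that $\mu$ is a continuous eigenvalue of $\sigma$ on $X_{\boldsymbol{\theta}}$ if and only if $\mu^h$ is a continuous eigenvalue of $\bar\sigma$ on $X_{\boldsymbol{\bar\theta}}$. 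Hence $\hat Z_{\boldsymbol{\bar\theta}} = \{\mu^h : \mu \in \hat Z_{\boldsymbol{\theta}}\}$; applying this to the generators $\{\ee^{2\pi\ii/q_n}\} \cup \{\ee^{2\pi\ii/h}\}$ of $\hat Z_{\boldsymbol{\theta}}$ and using $\gcd(h,q_n)=1$ yields $\hat Z_{\boldsymbol{\bar\theta}} = \langle \{\ee^{2\pi\ii/q_n}\}\rangle$, so the maximal equicontinuous factor of $X_{\boldsymbol{\bar\theta}}$ is exactly the odometer $\Z_{(q_n)}$ and the height of $\boldsymbol{\bar\theta}$ is $1$.

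The main technical obstacle will be verifying $\bar\tau_0(X_{\boldsymbol{\bar\theta}}) = Y_0$: once one correctly identifies the base $Y_0$ of the cyclic $\sigma^h$-minimal partition as the clopen set containing the limit word $u$, minimality of $(Y_0,\sigma^h)$ does the work, but this identification requires careful alignment between the limit-word construction in Lemma~\ref{lem:primitive} and the normalisation of $\pi_h$. All remaining steps are essentially bookkeeping once the suspension identification is in place.
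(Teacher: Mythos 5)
Your argument is correct and structurally parallels the paper's, with one stylistic variation and one small gap. The variation: for trivial height of $\boldsymbol{\bar\theta}$ you argue via Pontryagin duality and the standard eigenvalue relation for a height-$h$ suspension ($\mu \in \hat Z_{\boldsymbol\theta} \iff \mu^h \in \hat Z_{\boldsymbol{\bar\theta}}$), whereas the paper pushes a hypothetical cyclic $\sigma^{\bar h}$-minimal partition of $X_{\boldsymbol{\bar\theta}}$ through $\bar\tau_0$ to get a cyclic $h\bar h$-partition of $X_{\boldsymbol\theta}$, contradicting maximality of $h$. These are dual views of the same fact — cyclic $m$-partitions versus the eigenvalue $\ee^{2\pi\ii/m}$ — and both are valid.

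The gap is in the surjectivity step $\bar\tau_0(X_{\boldsymbol{\bar\theta}}) = Y_0$. You observe that the image is a closed $\sigma^h$-invariant set containing $u$ and that $Y_0$ is $\sigma^h$-minimal, but as stated this only gives $Y_0 \subseteq \bar\tau_0(X_{\boldsymbol{\bar\theta}})$; nothing yet forces the image to lie \emph{inside} $Y_0$, since a closed $\sigma^h$-invariant set is free to intersect several of the $Y_i$. The missing observation is that the image is itself $\sigma^h$-minimal — it is the continuous, $\bar\sigma$-to-$\sigma^h$-equivariant image of the minimal system $(X_{\boldsymbol{\bar\theta}},\bar\sigma)$ — and two $\sigma^h$-minimal sets meeting at $u$ must coincide. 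Alternatively, one can argue directly as the paper does: by minimality, any $\bar x \in X_{\boldsymbol{\bar\theta}}$ is a limit $\lim_k \bar\sigma^{n_k}(\bar u)$, so $\bar\tau_0(\bar x) = \lim_k \sigma^{n_k h}(u)$ stays in the closed $\sigma^h$-invariant set $Y_0$. Either way the fix is one line, and the rest of your argument (injectivity, the intertwining identity, the construction of $\Phi$, torsion-freeness of $\boldsymbol{\bar\theta}$) is sound.
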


\begin{proof}
Fix the sequence $(u^{(n)})_{n\geq 0}$, such that   $\theta^{(n)}(u^{(n+1)}) = u^{(n)}$ and $u=u^{(0)}\in X_{\boldsymbol \theta}$,
 that defines the directive sequence $\boldsymbol{\bar \theta}$.
By definition, the length sequence of ${\boldsymbol{\bar \theta}}$ is also $(q_n)$,   the length sequence of ${\boldsymbol{ \theta}}$. 
By Lemma \ref{lem:primitive}, each shift $(X_{\boldsymbol{\bar \theta}}^{(n)},\bar\sigma)$ is minimal, and also it cannot be periodic, as $X_{\boldsymbol \theta}$ is not periodic.
Hence $\boldsymbol{\bar \theta}$ is torsion free.
Note that  ${\bar \theta}^{(n)}$ is defined on $(\mathcal B_n)$ where each $\lvert\mathcal B_n\rvert\leq \lvert\mathcal A\rvert^{h}$. 
Thus
by Theorem  \ref{thm:dekking-sadic}, each of  ${\boldsymbol{ \theta}}$ and  ${\boldsymbol{\bar \theta}}$ is quasi-recognizable.
 As $\gamma_{\boldsymbol{\theta}}(h)=h$, there exists a $\sigma^h$-minimal set $X_0\subset X_{\boldsymbol {\theta}}$ containing $u$. We claim that the map 
${\bar \tau}_0 \colon X_{\boldsymbol {\bar\theta}} \rightarrow X_{\boldsymbol \theta}$  is a bijection between $X_{\boldsymbol {\bar\theta}}$ and $X_0$.  
Let ${\bar u}$ be the unique sequence such that 
${\bar \tau}_0({\bar u})=u$, i.e., $\tau_0(u)  ={\bar u}.   $ 
 As $X_{\boldsymbol{\bar \theta}}$ is minimal  by Lemma \ref{lem:primitive},  for any $x\in X_{\boldsymbol{\bar \theta}}$ we can write, for some $(n_k)$, 
 \begin{align*}
{\bar \tau }_0(x)& 
=  {\bar \tau}_0 \left(\lim_k \bar{\sigma}^{n_k} \tau_0 (  {u}   ) \right) = 
 {\bar \tau}_0 \left(\lim_k \tau_0 \sigma^{n_k h}  (  {u}   ) \right) =    \lim_k  \sigma^{n_k h}  (  {u}   ),
\end{align*}
and since  ${u}\in X_0$ and $X_0$ is $\sigma^h$-invariant, therefore ${\bar \tau }_0(x)\in X_0$. Also,
\begin{align*}{\bar \tau}_0  \bar\sigma (x) &= {\bar \tau}_0 \bar\sigma (     \lim_k {\bar\sigma}^{n_k} (  {\bar u} )     ) =
   {\bar \tau}_0 \bar \sigma (     \lim_k \bar\sigma^{n_k} \tau_0 (  { u}  )     ) =  {\bar \tau}_0  (   \tau_0  \lim_k \sigma^{(n_k+1) h}  (  { u} )     )\\&=
       \sigma^h  \lim_k \sigma^{n_k h}  (  { u}  )      =   \sigma^h  {\bar \tau}_0 \left( \lim_k {\bar\sigma}^{n_k }  (  {\bar u }  )\right)     =
 \sigma^h {\bar \tau}_0 (x) \end{align*}
 
 so that $\bar \tau_0$ is a conjugacy between
  $(X_{\boldsymbol{\bar \theta}}, \bar\sigma)$ and $(X_0, \sigma^h)$, and the second statement follows. 
 
 Next we show that $\boldsymbol{\bar \theta}$ has trivial height. Suppose $\bar h$ is coprime to $(q_n)$ and  that $Y_0\subset X_{\boldsymbol{\bar \theta}}$ is $\sigma^{\bar h}$-minimal and 
 defines a  ${\bar\sigma}^{\bar h}$-cyclic  partition    
 $\{  {\bar\sigma}^{i }Y_0 : 0\leq i \leq \bar h -1\}$
 of  $X_{\boldsymbol{\bar \theta}}$. Then ${\bar \tau}_0 (Y_0)$       is $\sigma^{h\bar h}$-minimal  in $X_{\boldsymbol  \theta}$ and 
$ \{  \sigma^{i} {\bar \tau}_0 (Y_0), 0\leq i \leq { h\bar h} -1\}$ is a $h\bar h$-cyclic partition of $X_{\boldsymbol \theta}$. But the definition
 of $h$ as being maximal forces $\bar h=1$.
\end{proof}

\section{Combinatorial interpretations of height}\label{sec:combinatorial-height}

In Definition \ref{def:dynamical-height}, we defined the height  of a directive sequence as the maximal $h$ such that $\gamma(h)=h$ and $h$ is co-prime 	to the length sequence $(q_n)_{n\geq 0}$. In this section we give a combinatorial characterisation of height in terms of return times, as was done in \cite{Dekking1977} for substitutions.

Recall that  for a primitive substitution of length $\ell$ with fixed point $u=u_0,u_1 \dots$, an equivalent definition of the height is
\[
h(\theta)\coloneqq  \max \{n\geq 1: \gcd(n,\ell)=1, n \mid \gcd\{k: u_k=u_0 \} \}\, .
\]

In  this section we find an equivalent combinatorial definition of the height for the directive sequence $\boldsymbol \theta$. 
By Proposition~\ref{prop:two-one-sided-continuous},  since the height gives rise to a continuous eigenvalue, we can work in the one-sided setting.

\begin{remark} For some technical reasons, e.g., see Lemma \ref{lem:comb-height-eigenvalue} and Example \ref{ex:injectivisation-2} below, we will sometimes need to work with an injective directive sequence. Theorem \ref{thm:sadic-recognizable}
 tells us  that we may always replace a directive sequence $\boldsymbol{\theta}$,  by a recognizable $\boldsymbol{ \widehat  \theta}$,   such that $X_{\boldsymbol{ \theta}} = X_{\boldsymbol{ \widehat  \theta}}$, and where every morphism in $\boldsymbol{ \widehat  \theta}$ is injective on letters.
	In light of this, we will define a sequence of combinatorial heights  $(h^{(n)})$  using the level-$n$ shifts of the injectivisation $\boldsymbol{\widehat \theta}$, and  for the remainder of this section we work with directive sequences that are injective on letters.
\end{remark}

Let $\boldsymbol{ \theta}$ be an injective directive sequence. Let $u^{(0)}$ be a limit word for $\boldsymbol{ \theta}$, so that there is a sequence $(u^{(n)})_{n\geq 0}$ satisfying $u^{(n)}\in X^{(n)}$ and 
$\theta^{(n)}(u^{(n+1)})= u^{(n)}$, i.e., $\theta^{[0,n)}(u^{(n)})=u^{(0)}$ for each $n$. We write $\gcd(m,(q_n))=1$ if $m$ is coprime to $q_n$ for each $n$.
Define 
\[h^{(n)}(\boldsymbol{ \theta} ) =h^{(n)}\coloneqq   \max \{d\geq 1: \gcd(d,(q_N)_{N\geq n})=1, d \mid \gcd\{k: u_k^{(n)}=u_0^{(n)} \} \}. \]
If $\boldsymbol{\theta}$ is not injective, we define 
\[h^{(n)}(\boldsymbol{ \theta})\coloneqq h^{(n)}    (\boldsymbol{ \widehat\theta})      \]
where $\boldsymbol{ \widehat\theta}$ is the injectivisation of $\boldsymbol{ \theta}$ given by 
by Theorem \ref{thm:sadic-recognizable}.

\begin{example} 
Consider  the substitutions  $S=\{\theta, \tau\}$ with
	\begin{align*}
		\theta\colon a   &\mapsto  aba  \ \  &\tau\colon a  &  \mapsto aab
		\\  b&  \mapsto  bac \ \  & b&  \mapsto  abc\\
		\ c  &\mapsto  bab
		\ \ & c &  \mapsto aac
	\end{align*}
	and consider the directive sequence $(\tau, \theta,\theta,\theta,\dotsc)$; then $h^{(n)}=2$ for $n\geq 1$, but $h^{(0)}=1$.
\end{example}

The previous example tells us that the sequence of combinatorial heights $(h^{(n)})$ can fluctuate. Nevertheless, if some $h^{(n)}>1$, injectivity forces the existence of eigenvalues for $(X^{(m)}, \sigma)$ for $m\leq n$.

\begin{lemma}\label{lem:comb-height-eigenvalue}
Let $\boldsymbol{ \theta}$ be an injective  torsion-free directive sequence defined on a  sequence of bounded alphabets. 
If $h^{(n)}\geq 2$, then 
 $(X^{(m)},\sigma)$ has a cyclic $h^{(n)}$-minimal partition for $0\leq m \leq n$. 
\end{lemma}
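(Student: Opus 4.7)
The plan is to build, for each $0 \le m \le n$, a continuous surjective factor map $\alpha_m \colon (X^{(m)}, \sigma) \to (\mathbb{Z}/h^{(n)}\mathbb{Z}, +1)$; the $h^{(n)}$ fibres of $\alpha_m$ will then form the required $\sigma^{h^{(n)}}$-cyclic partition, with each fibre being $\sigma^{h^{(n)}}$-minimal thanks to the minimality of $(X^{(m)}, \sigma)$ combined with the cyclic action on the fibres.

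I first construct $\alpha_n$. By the very definition of $h^{(n)}$, every position $k \ge 0$ of $a_n := u^{(n)}_0$ in the one-sided limit word $u^{(n)}$ satisfies $h^{(n)} \mid k$; a short uniform-recurrence argument (examining how the word $u^{(n)}_{[-k,0]}$ recurs on the positive side) extends this to all positions on the negative side, so that in the two-sided $u^{(n)}$ every position of $a_n$ lies in $h^{(n)} \mathbb{Z}$. For any $x \in X^{(n)}$, write $x = \lim_i \sigma^{j_i} u^{(n)}$ using minimality. If $m_1, m_2$ are two positions of $a_n$ in $x$, then for $i$ large one has $u^{(n)}_{j_i + m_1} = u^{(n)}_{j_i + m_2} = a_n$ and both indices belong to $h^{(n)}\mathbb{Z}$, so $m_1 \equiv m_2 \pmod{h^{(n)}}$. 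Since uniform recurrence forces $a_n$ to occur in every sufficiently large window, I define $\alpha_n(x) \in \mathbb{Z}/h^{(n)}\mathbb{Z}$ to be the common residue class of the positions of $a_n$ in $x$. The map is continuous and surjective (taking values $0, -1, \ldots, -(h^{(n)}-1)$ along $u^{(n)}, \sigma u^{(n)}, \ldots$), and satisfies $\alpha_n \circ \sigma = \alpha_n - 1$; a sign flip produces the desired factor map.

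For $m < n$, I would invoke Lemma~\ref{quasi-almost-recog}, applicable because $\boldsymbol{\theta}$ is injective on letters and quasi-recognizable by Theorem~\ref{thm:dekking-sadic}, so that $\boldsymbol{\theta}$ is recognizable and every $x \in X^{(m)}$ has a unique decomposition $x = \sigma^r \theta^{[m,n)}(y)$ with $y \in X^{(n)}$ and $0 \le r < p_{m,n}$, where $p_{m,n} := q_m q_{m+1} \cdots q_{n-1}$. I then set
\[
\alpha_m(x) := \alpha_n(y)\, p_{m,n} + r \pmod{h^{(n)}}.
\]
Continuity of $\alpha_m$ follows from recognizability and continuity of $\alpha_n$, while a two-case check (inside a single $n$-supertile, where $r \mapsto r+1$; and across an $n$-supertile boundary, where $\sigma^{p_{m,n}} \theta^{[m,n)}(y) = \theta^{[m,n)}(\sigma y)$) verifies $\alpha_m \circ \sigma = \alpha_m + 1$. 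The torsion-free hypothesis enters here through the identity $\gcd(h^{(n)}, p_{m,n}) = 1$: any prime dividing some $q_j$ with $j < n$ must, by torsion-freeness, also divide $q_N$ for some $N \ge n$, and such a prime cannot divide $h^{(n)}$.

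The main obstacle is the well-definedness of $\alpha_n$ at level $n$, where one must promote the combinatorial divisibility condition -- which a priori concerns only positions of $a_n$ in the distinguished limit word $u^{(n)}$ -- to every point of $X^{(n)}$; once this is in hand, everything afterwards reduces to routine verifications with recognizability and the supertile structure.
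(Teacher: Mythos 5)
Your proof is correct and arrives at the same conclusion via a variant of the paper's argument. The differences are worth spelling out. For the level-$n$ step, you promote the divisibility condition in the definition of $h^{(n)}$ to an explicit factor map $\alpha_n\colon X^{(n)}\to\Z/h^{(n)}\Z$, which requires extending the constraint from the one-sided limit word to all positions of $a_n$ in every two-sided point; the paper instead defines the partition cells $C_i$ directly via the first occurrence of $u_0^{(n)}$ in a fixed window of length $\ell$, which sidesteps the two-sided extension and the well-definedness discussion entirely. For the descent to levels $m<n$, you build $\alpha_m$ in a single global step using the full decomposition $x=\sigma^r\theta^{[m,n)}(y)$ provided by recognizability, whereas the paper descends one level at a time, pushing the partition forward through $\theta^{(m-1)}$ via the sets $C^{(m-1)}_{j,k}=\sigma^j(\theta^{(m-1)}(C^{(m)}_k))$ and regrouping by the residue of the shift count modulo $h^{(n)}$. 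Both routes rely on the same input: the definition of $h^{(n)}$ at level $n$, and recognizability (obtained from Theorem~\ref{thm:dekking-sadic} and Lemma~\ref{quasi-almost-recog}) to push information down through the morphisms. Your global formulation is slightly cleaner on the descent side; the paper's partition-first formulation is slightly cleaner at level $n$.

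One small inaccuracy: the observation $\gcd(h^{(n)},p_{m,n})=1$ is true under the torsion-free hypothesis, but it plays no role in your construction. The formula $\alpha_m(x)=\alpha_n(y)\,p_{m,n}+r\pmod{h^{(n)}}$ is well-defined, continuous, equivariant and (by minimality) surjective regardless of whether $p_{m,n}$ is invertible mod $h^{(n)}$; surjectivity follows from the fact that a nonempty set invariant under $+1$ on $\Z/h^{(n)}\Z$ is the whole group. The torsion-free hypothesis is used in your proof only to secure recognizability, exactly as you note earlier. You should delete the $\gcd$ paragraph or reframe it as a side remark. Finally, the step ``each fibre is $\sigma^{h^{(n)}}$-minimal by minimality of $(X^{(m)},\sigma)$'' deserves a sentence: if $Y\subset\alpha_m^{-1}(0)$ is nonempty, closed and $\sigma^{h^{(n)}}$-invariant, then $\bigcup_{i=0}^{h^{(n)}-1}\sigma^i(Y)$ is closed and $\sigma$-invariant, hence all of $X^{(m)}$, and intersecting with the clopen fibre $\alpha_m^{-1}(0)$ gives $Y=\alpha_m^{-1}(0)$.
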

\begin{proof}
Suppose that $h\coloneqq h^{(n)}\geq 2$. 
We first show that $(X^{(n)},\sigma)$ has a  cyclic $\sigma^{h}$-minimal  partition. Since $(X^{(n)},\sigma)$ is minimal, there is an $\ell$ such that any word of length $\ell$ contains at least one occurrence of the letter $u^{(n)}_0$.  Let $Q_i$ be the set of such words $w$ where we see the first occurrence of $u^{(n)}_0$ at a location congruent to  $i \bmod h$. Taking $C_i\coloneqq  \bigcup_{w\in Q_i} [w]$, we obtain that $\{C_0, \dotsc, C_{h-1} \}$ is a   cyclic $h$-minimal partition; the distance between two instances of the letter $u_0^{(n)}$ is a multiple of $h^{(n)}$ by definition (and minimality), ensuring that the definition of this partition is consistent.

Next, we claim  that  for any $m\leq n$, we can create a cyclic $h$-minimal partition for $(X^{(m)},\sigma)$. To do this we will proceed inductively, showing that we can build a cyclic $h$-minimal partition for $X^{(m-1)}$ from one such partition in $X^{(m)}$ whenever $\theta^{(m-1)}$ is injective on letters.

Let $\{C_0^{(m)},\dotsc,C_{h-1}^{(m)}\}$ be a cyclic $\sigma^h$-minimal partition for $X^{(m)}$. Consider the following col\-lec\-tion of sets:
 	\[C_{j,k}^{(m-1)}\coloneqq \sigma^j(\theta^{(m-1)}(C^{(m)}_k)),\quad 0\le j < q_{m-1},0\le k < h.\]
By definition, $\sigma(C_{j,k}^{(m-1)})=C_{j+1,k}^{(m-1)}$ whenever $0\le j < q_{m-1}-1$; note that the identity $\sigma^{q_{m-1}}\circ\theta^{(m-1)}=\theta^{(m-1)}\circ\sigma$ ensures that $\sigma(C_{q_{m-1}-1,k}^{(m-1)})=C_{0,k+1}^{(m-1)}$, where the second index is taken modulo $h$. Thus, if we define
	\[C_k^{(m-1)} \coloneqq \bigcup_{\substack{0\le \ell < h \\ j\cdot q_{m-1} + \ell\equiv k\pmod{h}}} C_{j,\ell}^{(m-1), }\]
then we must have that $\sigma(C_k^{(m-1)})=C_{k+1 \pmod{h}}^{(m-1)}$. Since $\theta^{(m-1)}$ is injective on letters, and since ${\boldsymbol \theta}$ is torsion-free and hence recognizable, $\theta^{(m-1)}$
 is injective as a function $X^{(m)}\to \theta^{(m-1)}(X^{(m-1)})$. Thus, the $hq_{m-1}$ sets $C^{(m-1)}_{j,k}$ are all disjoint, ensuring that the sets $\{C^{(m-1)}_0, \dots C^{(m-1)}_{h-1}\}$ form a partition. We conclude by induction.
\end{proof}

We now study how the eigenfunctions at different levels $X^{(n)}$ are related, and the combinatorial interpretation of this relationship. We start with the following simple observation. To avoid confusion with what will follow we temporarily call the height of  Definition~\ref{def:dynamical-height} the {\em dynamical} height.

\begin{lemma}\label{lem:eigenfunctions_at_higher_levels}
	Let $\boldsymbol{ \theta}$ be a torsion-free directive sequence with dynamical height $h$. Then for each $n\ge 1$,   $\lambda = \ee^{2\pi\ii/h}$  is 
		 an eigenvalue for  $(X^{(n)}, \sigma)$.
\end{lemma}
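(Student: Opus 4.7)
The plan is to build an eigenfunction on $X^{(n)}$ by pulling back a known eigenfunction from $X^{(0)}$ along the natural concatenation map and then raising the result to a suitable power. By Corollary \ref{cor:MEF-torsion-free}, the maximal equicontinuous factor of $(X^{(0)},\sigma)$ is $\Z_{(q_n)}\times \Z/h\Z$, and projecting to the second factor and composing with the character $k\mapsto \ee^{2\pi \ii k/h}$ produces a continuous eigenfunction $f\colon X^{(0)}\to S^1$ with eigenvalue $\lambda = \ee^{2\pi \ii /h}$ and $|f|\equiv 1$.

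Next I would consider the map $\phi_n\colon X^{(n)}\to X^{(0)}$ defined by letting $\phi_n(x)$ be the bi-infinite concatenation of the blocks $\theta^{[0,n)}(x_k)$, with $\theta^{[0,n)}(x_0)$ placed on coordinates $[0,p_n)$. The $S$-adic structure (every finite subword of $\phi_n(x)$ is a subword of some $\theta^{[0,n)}(w)$ with $w \in \mathcal{L}^{(n)}$) guarantees that $\phi_n(x)\in X^{(0)}$, and $\phi_n$ is continuous since each $\theta^{[0,n)}(a)$ has fixed length $p_n$. Because of this constant length, applying $\sigma$ to $x$ corresponds to applying $\sigma^{p_n}$ to $\phi_n(x)$, giving the intertwining $\phi_n\circ \sigma = \sigma^{p_n}\circ \phi_n$. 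The pullback $g \coloneqq f\circ\phi_n\colon X^{(n)}\to S^1$ is therefore a continuous, unimodular function satisfying $g\circ\sigma = \lambda^{p_n} g$.

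Finally, by Corollary \ref{cor:MEF-torsion-free} the height $h$ is coprime to every $q_k$, and in particular to $p_n = q_0\cdots q_{n-1}$. Choose $m\in\Z$ with $m p_n \equiv 1\pmod h$; then $g^m$ is continuous and non-vanishing on $X^{(n)}$, and a direct computation gives $g^m\circ\sigma = \lambda^{m p_n}\, g^m = \lambda\, g^m$, using $\lambda^h=1$. Hence $\lambda$ is a continuous eigenvalue of $(X^{(n)},\sigma)$, as desired. The only point deserving any attention is the intertwining $\phi_n\circ\sigma = \sigma^{p_n}\circ\phi_n$, which is the standard constant-length compatibility and presents no serious obstacle; the real arithmetic input that makes the raising-to-the-$m$-th-power trick available is precisely the coprimality of $h$ with $p_n$ supplied by the torsion-free hypothesis.
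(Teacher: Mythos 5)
Your proof is correct and is essentially the paper's argument, just telescoped into one step: the paper defines $f_{n+1}(x)=(f_n\circ\theta^{(n)}(x))^{r_n}$ with $q_n r_n\equiv 1\pmod h$ and unwinds this inductively, which after $n$ steps gives exactly $f_n=(f_0\circ\theta^{[0,n)})^{r_0\cdots r_{n-1}}$ with $r_0\cdots r_{n-1}\equiv p_n^{-1}\pmod h$, i.e.\ your $g^m = (f\circ\phi_n)^m$. The key observations — the intertwining $\theta^{[0,n)}\circ\sigma=\sigma^{p_n}\circ\theta^{[0,n)}$ and the coprimality of $h$ with $p_n$ supplied by Corollary~\ref{cor:MEF-torsion-free} — are identical.
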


\begin{proof}
	Let $f_0\colon X^{(0)}\to S^1$ be an eigenfunction associated to  $\lambda$, and define  $f_1\colon X^{(1)} \to S^1$ given by $f_1(x) \coloneqq (f_0\circ\theta^{(0)}(x))^{r_0}$, where $r_0$ is an inverse modulo $h$ of $q_0$, i.e.,  $q_0r_0 \equiv 1 \pmod{h}$.
 Then 	 we have
		\begin{align*}
			f_1\circ\sigma(x) &= (f_0 \circ \theta^{(0)} \circ \sigma(x))^{r_0} \\
			&= (f_0\circ \sigma^{q_0} \circ \theta^{(0)}(x))^{r_0} \\
			&= (\lambda^{q_0}\cdot f_0 \circ \theta^{(0)} (x))^{r_0} \\
			&= \lambda^{q_0 r_0}\cdot (f_0 \circ \theta^{(0)} (x))^{r_0} \\
			&= \lambda \cdot f_1(x),
		\end{align*}
		Thus $f_1$ is a continuous eigenfunction for $X^{(1)}$ with associated eigenvalue $\lambda$. Inductively, given an eigenfunction $f_n\colon X^{(n)}\to S^1$, we define an eigenfunction $f_{n+1}\colon X^{(n+1)} \to S^1$ by $f_{n+1}(x) \coloneqq (f_n\circ \theta^{(n)}(x))^{r_n}$, where $q_n r_n \equiv 1 \pmod{h}$; such $r_n$ always exists as $h$ is coprime to every $q_n$.
\end{proof}

\begin{corollary}\label{cor:alphabet_partition}
	Let $\boldsymbol{ \theta}$ be a torsion-free directive sequence defined on a sequence of bounded  alphabets $(\mathcal A_n)$, and with dynamical height $h$. Then there exists some $n^*\in\N$ such that for every $n\ge n^*$, there is a partition $\{\mathcal{A}_0^{(n)},\dotsc,\mathcal{A}^{(n)}_{h-1}\}$ of $\mathcal{A}_n$ into $h$ sets, 
	 such that for any $ab \in \mathcal L^{(n)}$, if $a \in\mathcal{A}_j^{(n)}$, then $b\in\mathcal{A}_{j+1 \pmod h}^{(n)}$. 
	 \end{corollary}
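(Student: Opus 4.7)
The plan is to take a continuous eigenfunction for the eigenvalue $\lambda = \ee^{2\pi\ii/h}$ at the bottom level and pull it back recursively through the directive sequence, showing that its window of dependence shrinks to a single symbol. Once the level-$n$ eigenfunction depends only on the initial letter, its level sets will give the desired partition of $\mathcal{A}_n$, and the cyclic relation $\tilde{f}_n\circ\sigma=\lambda\cdot\tilde{f}_n$ will immediately yield the shift property across consecutive letters.

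First I would combine Corollary~\ref{cor:MEF-torsion-free} and Proposition~\ref{prop:two-one-sided-continuous} to produce a continuous eigenfunction $\tilde{f}_0\colon\tilde{X}^{(0)}\to S^1$ for $\lambda$ on the one-sided system, and, after multiplying by a unimodular constant, assume its image lies in the finite group $\{1,\lambda,\dotsc,\lambda^{h-1}\}$; continuity on a totally disconnected compact space then forces $\tilde{f}_0$ to depend only on some finite prefix $x_{[0,r_0]}$. Then, following the construction in the proof of Lemma~\ref{lem:eigenfunctions_at_higher_levels}, I would recursively set $\tilde{f}_{n+1}(y)\coloneqq(\tilde{f}_n\circ\theta^{(n)}(y))^{s_n}$, where $s_n q_n\equiv1\pmod{h}$ (which exists because $h$ is coprime to every $q_n$). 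Each $\tilde{f}_{n+1}$ is then a continuous eigenfunction for $\lambda$ on $\tilde{X}^{(n+1)}$ taking values in the same finite group.

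The crux is the window contraction: if $\tilde{f}_n$ depends only on $x_{[0,r_n]}$, then each symbol $\theta^{(n)}(y)_k$ with $0\le k\le r_n$ lies inside the supertile $\theta^{(n)}(y_{\lfloor k/q_n\rfloor})$, so $\tilde{f}_{n+1}$ depends at most on $y_{[0,\lfloor r_n/q_n\rfloor]}$, giving $r_{n+1}\le\lfloor r_n/q_n\rfloor$. Iterating, $r_{n+k}\le\lfloor r_n/(q_n q_{n+1}\cdots q_{n+k-1})\rfloor$. Since the torsion-free hypothesis includes $q_j\ge 2$ for infinitely many $j$, the denominators diverge and the sequence $(r_n)$ eventually stabilises at $0$; let $n^*$ be the least index with $r_{n^*}=0$.

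For $n\ge n^*$, $\tilde{f}_n$ factors through a function $\phi_n\colon\mathcal{A}_n\to\{1,\lambda,\dotsc,\lambda^{h-1}\}$, and I would set $\mathcal{A}_j^{(n)}\coloneqq\phi_n^{-1}(\lambda^j)$ to obtain the required partition into $h$ (possibly empty) classes. For any $ab\in\mathcal{L}^{(n)}$, minimality produces some $x\in\tilde{X}^{(n)}$ with $x_0 x_1=ab$, and the eigenfunction identity yields $\phi_n(b)=\tilde{f}_n(\sigma x)=\lambda\cdot\phi_n(a)$, so $b\in\mathcal{A}_{j+1\bmod h}^{(n)}$ whenever $a\in\mathcal{A}_j^{(n)}$. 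I expect the only technicality to be verifying that each $\tilde{f}_{n+1}$ is a continuous eigenfunction for $\lambda$ on $\tilde{X}^{(n+1)}$, but that is the short computation already carried out in Lemma~\ref{lem:eigenfunctions_at_higher_levels}; the essential content of the proof is the elementary window estimate, which is forced to collapse to zero under the torsion-free hypothesis.
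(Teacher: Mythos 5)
Your proof is correct and follows essentially the same route as the paper: produce an eigenfunction for $\lambda=\ee^{2\pi\ii/h}$, pull it up through the levels using the inverses of $q_n$ modulo $h$ (as in Lemma~\ref{lem:eigenfunctions_at_higher_levels}), observe that the dependence window shrinks by a factor $q_n$ at each step and therefore collapses to a single letter because infinitely many $q_n\ge 2$, and take level sets to build the partition. The only cosmetic differences are that you track the maximal index $r_n$ with floor rather than the window length $N_n$ with ceiling, and that you explicitly normalise the eigenfunction so its image lies in $\{1,\lambda,\dots,\lambda^{h-1}\}$ before invoking local constancy — a step the paper leaves implicit but which is indeed needed for the window to be finite. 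One minor slip: the classes $\mathcal{A}_j^{(n)}$ are in fact all non-empty, since $\tilde f_n\circ\sigma^j=\lambda^j\tilde f_n$ forces $\tilde f_n$ to be surjective onto the group of $h$-th roots of unity, so "possibly empty" can be strengthened to "non-empty", as the paper notes.
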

\begin{proof}
	We work with the one-sided shifts.	 Let $(f_n)$ be the eigenfunctions in the proof of Lemma~\ref{lem:eigenfunctions_at_higher_levels}, associated to the eigenvalue $e^{2\pi i /h}$. Since $f_0$ is continuous, there exists a value $N_0\ge 1$ such that $x\rvert_{[0,N_0)}=y\rvert_{[0,N_0)}\implies f_0(x)=f_0(y)$. Since $\lvert\theta^{(0)}(w)\rvert = q_0 \cdot \lvert w \rvert$ for any word $w$, we have that if $N_1 = \bigl\lceil \frac{N_0}{q_0}\bigr\rceil$, then for each $x^{(1)},y^{(1)}\in X^{(1)}$:
		\begin{align*}
			x^{(1)}\rvert_{[0,N_1)}=y^{(1)}\rvert_{[0,N_1)} & \implies \theta^{(0)}(x^{(1)}) \rvert_{[0,N_0)}=\theta^{(0)}(y^{(1)})\rvert_{[0,N_0)}\\
			 &\implies f_1(x^{(1)}) = f_0(\theta^{(0)}(x^{(1)}))=f_0(\theta^{(0)}(y^{(1)})) = f_1(y^{(1)}),
		 \end{align*}
	 Similarly, if we define inductively $N_{j+1}=\bigl\lceil\frac{N_j}{q_j}\bigr\rceil$, we see that $f_j(x^{(j)})$ is entirely determined by $x^{(j)}\rvert_{[0,N_j)}$. Since infinitely many of the $q_j$ are greater than $1$, 
	  there exists $n^*$  such that if $n\ge n^*$ then  $N_{n} = 1$, so that  $f_n$ is determined entirely by the letter at the origin,
	i.e., there exists a function $\bar{f}_n\colon\mathcal{A}\to S^1$ such that $f_n(x^{(n)})=\bar{f}_n(x^{(n)}_0)$.
	 Defining $\mathcal{A}^{(n)}_j=\bar{f}_n^{-1}(\{\lambda^j\}), 0\le j< h$, we obtain a partition of $\mathcal{A}$ into $h$ non-empty sets, where the series of equalities 
	 \[\bar{f}_n(x_1^{(n)})=f_n\circ\sigma(x^{(n)})=\lambda\cdot f_n(x^{(n)}) = \lambda\cdot\bar{f}_n(x^{(n)}_0)\]
	 imply that for any $x^{(n)}\in X^{(n)}$, if $x_0^{(n)}\in\mathcal{A}^{(n)}_j$ then $x_1^{(n)}\in\mathcal{A}^{(n)}_{j+1}$, as desired.
\end{proof}

\begin{corollary}\label{cor:heights-stabilise}
Let $\boldsymbol{\theta}$ be a  torsion-free directive sequence which is injective on letters and defined on a sequence of bounded alphabets. Then the sequence $(h^{(n)})$ is bounded.
\end{corollary}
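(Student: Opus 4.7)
The plan is to show that $h^{(n)}\mid h$ for every $n$, where $h$ is the dynamical height of $\boldsymbol{\theta}$; since $h$ is finite, this immediately gives boundedness of $(h^{(n)})$.

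First I would strengthen the coprimality condition in the definition of $h^{(n)}$. The definition only demands $\gcd(h^{(n)},(q_N)_{N\ge n})=1$, but under torsion-freeness I claim this upgrades to coprimality with the \emph{entire} length sequence $(q_k)_{k\ge 0}$. Indeed, if some prime $p$ divided both $h^{(n)}$ and some $q_k$, then torsion-freeness would force $p\mid q_N$ for infinitely many $N$, including some $N\ge n$, contradicting the defining condition on $h^{(n)}$.

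Next, I would apply Lemma \ref{lem:comb-height-eigenvalue} at level $m=0$ to obtain a cyclic $\sigma^{h^{(n)}}$-minimal (hence clopen) partition of $X_{\boldsymbol{\theta}}$. This yields a continuous factor map $(X_{\boldsymbol{\theta}},\sigma)\to(\Z/h^{(n)}\Z,+1)$, making $\ee^{2\pi \ii /h^{(n)}}$ a continuous eigenvalue of $(X_{\boldsymbol{\theta}},\sigma)$. By Corollary \ref{cor:MEF-torsion-free} together with Remark \ref{rem:pontryagin_interpretation}, the group of continuous eigenvalues is $\hat{\Z}_{(q_n)}\oplus\widehat{\Z/h\Z}$, and orders of elements in these two summands are always coprime (dividing some $p_m=q_0\cdots q_{m-1}$ and $h$, respectively). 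Since $h^{(n)}$ is coprime to every $p_m$ by the first step, the component of $\ee^{2\pi \ii /h^{(n)}}$ in $\hat{\Z}_{(q_n)}$ must be trivial, forcing its order $h^{(n)}$ to divide $h$.

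The main obstacle is the first step: the definition of $h^{(n)}$ demands coprimality only with the tail $(q_N)_{N\ge n}$, so a priori $h^{(n)}$ could acquire prime factors from the initial portion $q_0,\ldots,q_{n-1}$, and such factors would destroy the argument by placing the eigenvalue $\ee^{2\pi \ii /h^{(n)}}$ (partially) inside $\hat{\Z}_{(q_n)}$, where orders are unbounded. The torsion-free hypothesis is precisely what rules this out. Once full coprimality is established, the conclusion is a direct assembly of Lemma \ref{lem:comb-height-eigenvalue} with the structure of the maximal equicontinuous factor identified in Corollary \ref{cor:MEF-torsion-free}.
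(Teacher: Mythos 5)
Your proposal is correct and follows essentially the same route as the paper: apply Lemma~\ref{lem:comb-height-eigenvalue} at level $0$ to realise $\ee^{2\pi\ii/h^{(n)}}$ as a continuous eigenvalue, then use the structure of the maximal equicontinuous factor from Corollary~\ref{cor:MEF-torsion-free} to force $h^{(n)}\mid h$. The one place you go beyond the paper's write-up is the first step: the paper's proof asserts in passing that ``any $j_n$ coprime to $(q_j)_{j\ge 0}$ must be a divisor of the dynamical height $h$'', tacitly using that $h^{(n)}$ is coprime to the \emph{whole} length sequence, whereas the definition only demands coprimality with the tail $(q_N)_{N\ge n}$. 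Your observation that torsion-freeness upgrades tail-coprimality to full coprimality is exactly the missing justification, and it is the right way to fill that small gap.
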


\begin{proof}
If the sequence $(h^{(n)})$ is not bounded, then by Lemma $\ref{lem:comb-height-eigenvalue}$ we  obtain  that  $(X^{(0)}, \sigma)$ has a cyclic $\sigma^{j_n}$-minimal partition for arbitrarily large  $j_n$, equivalently, that $\ee^{2\pi\ii/j_n}$ is a continuous eigenvalue of the shift. But this contradicts Corollary \ref{cor:MEF-torsion-free}, since any $j_n$ coprime to $(q_j)_{j\ge 0}$ must be a divisor of the dynamical height $h$. Therefore the sequence $(h^{(n)})$ is bounded. \end{proof}
Thus we can define the \emph{combinatorial height} for $\boldsymbol \theta$ as
\begin{equation}\label{eq:height-combin}
 h_{\rm comb}(\boldsymbol \theta)\coloneqq\max \{ h^{(n)}: n\geq 0 \} 
\end{equation}

We will show, in Theorem \ref{thm:combinatorial-height}, that the combinatorial height given by \eqref{eq:height-combin} equals the (dynamical) height  given in Definition \ref{def:dynamical-height}.
Before we do that we give an example to show  why we need to define the sequence $h^{(n)}$ in terms of the injectivisation $\boldsymbol{\widehat\theta}$ of $\boldsymbol{\theta}$.

\begin{example}\label{ex:injectivisation-2}
		To see how combinatorial height may fail to reflect the actual height of the shift in the absence of injectivity, consider the three morphisms $\theta,\vartheta,\varrho$ from Example~\ref{ex:quasi_recognizable_but_not_recognizable} and the same directive sequence $\boldsymbol{\alpha} = (\varrho,\vartheta,\vartheta,\vartheta,\dotsc)$. 
	
	As noted in the previous example,  we have $X^{(0)} = X_\theta$, and $ X^{(n)} = X_\vartheta$ for any $n\ge 1$. We can easily verify that $X_\vartheta$ has (dynamical) height $2$, as its alphabet partitions into $\{\{0,1\},\{\bar{0},\bar{1}\}\}$, where a barred symbol is always followed by an unbarred symbol, and vice versa. Hence, the directive sequence $\boldsymbol{ \alpha}$ has an alphabet partition into two sets at every level from $1$ onwards.
	
	If we ignore the the injectivity hypothesis for a moment, we could compute $h^{(n)}$ by definition for the sequence $\boldsymbol{\alpha}$, obtaining $h^{(0)}=1,h^{(n)}=2$ for $n\ge 1$ due to the presence of the aforementioned alphabet partition. We could be tempted to conclude that $X_{\boldsymbol{\alpha}}$ has dynamical height at least $2$; however, as $11$ is in the language of the substitutive shift $X_\theta = X_{\boldsymbol{ \alpha}}$, the latter must be pure (i.e. the odometer $\Z_3$ is already its maximal equicontinuous factor), and thus have height $1$.
\end{example}

The following result is a generalisation of \cite[Lem.~11(ii)]{Dekking1977}.
\begin{lemma}\label{lem:EV-div}
Let $\boldsymbol{\theta}$ be a  torsion-free directive sequence defined on a sequence of bounded al\-pha\-bets $(\mathcal{A}_n)$, with length-sequence $(q_n)$.
Let $\ee^{2\pi \ii /m}$ be a con\-tin\-u\-ous eigenvalue for $(X_{\boldsymbol\theta}, \sigma)$ with $(m,q_n)=1$ for each $n$.  Then $m\mid h^{(n)}$ for all $n$ large.
\end{lemma}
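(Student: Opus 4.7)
The plan is to lift the eigenfunction $f_0\colon X_{\boldsymbol{\theta}}\to S^1$ associated to $\lambda = \ee^{2\pi\ii/m}$ through every level of the directive sequence and then extract from that lifting an alphabet partition of $\mathcal{A}_n$ into $m$ classes, whose combinatorics forces $m\mid h^{(n)}$. First, using Theorem \ref{thm:sadic-recognizable} I would replace $\boldsymbol{\theta}$ by its injectivisation $\boldsymbol{\widehat\theta}$; this does not change $X^{(0)}$, the length sequence $(q_n)$, nor the quantities $h^{(n)}$ (by definition), and one may then assume each $\theta^{(n)}$ is injective on letters.

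Next I would mimic Lemma \ref{lem:eigenfunctions_at_higher_levels}: since $\gcd(m,q_n)=1$ for every $n$, choose $r_n$ with $q_n r_n \equiv 1\pmod m$ and set inductively
\[
f_{n+1}(x) \coloneqq \bigl(f_n\circ \theta^{(n)}(x)\bigr)^{r_n}.
\]
Using $\theta^{(n)}\circ\sigma = \sigma^{q_n}\circ\theta^{(n)}$ one checks that $f_{n+1}\circ\sigma = \lambda^{q_n r_n}f_{n+1} = \lambda f_{n+1}$, so $f_n$ is a continuous eigenfunction of $(X^{(n)},\sigma)$ with eigenvalue $\lambda$ for every $n\geq 0$.

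Then, exactly as in the proof of Corollary \ref{cor:alphabet_partition}, one iterates the modulus-of-continuity estimate $N_{j+1}=\lceil N_j/q_j\rceil$; since infinitely many $q_j\geq 2$, there exists $n^*$ such that $N_n=1$ for every $n\geq n^*$. Hence for all such $n$, $f_n(x)$ depends only on the letter $x_0$, giving a function $\bar f_n\colon \mathcal{A}_n\to S^1$ and a partition
\[
\mathcal{A}_n = \mathcal{A}_0^{(n)} \sqcup \mathcal{A}_1^{(n)} \sqcup \cdots \sqcup \mathcal{A}_{m-1}^{(n)},\qquad \mathcal{A}_j^{(n)} \coloneqq \bar f_n^{-1}(\{\lambda^j\}),
\]
with the property that $ab\in\mathcal{L}^{(n)}$ and $a\in\mathcal{A}_j^{(n)}$ force $b\in\mathcal{A}_{j+1\bmod m}^{(n)}$.

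Finally, I apply this to the limit word $u^{(n)}$: the class $\mathcal{A}_j^{(n)}$ to which $u^{(n)}_k$ belongs is determined by $k \bmod m$ (relative to the class of $u^{(n)}_0$), so every index $k$ with $u^{(n)}_k = u^{(n)}_0$ satisfies $m\mid k$. Therefore $m\mid \gcd\{k:u^{(n)}_k = u^{(n)}_0\}$, and combined with the hypothesis $\gcd(m,q_j)=1$ for all $j\geq n$, the definition of $h^{(n)}$ yields $m\mid h^{(n)}$ for every $n\geq n^*$. The only real work is packaged into the earlier lemmas, and no serious obstacle remains; the argument is essentially an extension of Corollary \ref{cor:alphabet_partition} from the dynamical height $h$ to any $m$ coprime to $(q_n)$ arising as the order of a rational continuous eigenvalue.
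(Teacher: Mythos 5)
Your proof is correct and reaches the same combinatorial core as the paper's, but by a somewhat more self-contained route. The paper cites Corollary~\ref{cor:alphabet_partition} verbatim, which produces an alphabet partition into $h$ classes (where $h$ is the dynamical height), deduces $h\mid h^{(n)}$ for large $n$, and then invokes Corollary~\ref{cor:MEF-torsion-free} and the maximality of $h$ to conclude $m\mid h$ and hence $m\mid h^{(n)}$. You instead re-run the arguments of Lemma~\ref{lem:eigenfunctions_at_higher_levels} and Corollary~\ref{cor:alphabet_partition} with the eigenvalue $\ee^{2\pi\ii/m}$ in place of $\ee^{2\pi\ii/h}$; since $\gcd(m,q_n)=1$ is the only property used, the inductive lifting $f_{n+1}=(f_n\circ\theta^{(n)})^{r_n}$ and the modulus-of-continuity argument go through unchanged, yielding an alphabet partition into $m$ classes and the conclusion $m\mid\gcd\{k:u^{(n)}_k=u^{(n)}_0\}$ directly. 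What your route buys is that you never need to know that $m$ divides $h$, nor anything about the structure of the MEF; what it costs is that you must check the earlier arguments really did not use any feature of $h$ beyond coprimality to $(q_n)$ (they did not, so this is fine). One small point worth making explicit at the end, in both your proof and the paper's: from ``$m$ is coprime to $(q_N)_{N\ge n}$ and $m\mid\gcd\{k:u^{(n)}_k=u^{(n)}_0\}$'' the definition of $h^{(n)}$ as a maximum only gives $m\le h^{(n)}$ immediately; the divisibility $m\mid h^{(n)}$ requires the observation that $\operatorname{lcm}(m,h^{(n)})$ also satisfies both conditions, forcing $\operatorname{lcm}(m,h^{(n)})=h^{(n)}$.
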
		 
\begin{proof}   Theorem \ref{thm:sadic-recognizable} tells us that if   $\boldsymbol\theta$ is not injective, we can equally work with its injectivisation   $\boldsymbol{\widehat\theta}$. Henceforth we assume that $\boldsymbol\theta$ is injective. 
We know from Corollary~\ref{cor:alphabet_partition} that for any sufficiently large $n$ we can find a partition of the alphabet into $h$ disjoint sets $\{\mathcal{A}_0^{(n)}, \dotsc , \mathcal{A}_{h-1}^{(n)}\}$  such that in every point of $X^{(n)}$, whenever we see a symbol from $\mathcal{A}_j^{(n)}$, it is followed by a symbol from $\mathcal{A}_{j+1}^{(n)}$. In particular, the next symbol from $\mathcal{A}_j^{(n)}$ we see appears exactly $h$ positions away. Thus, if $u^{(n)}$ is a fixed point, we have that the symbol $u^{(n)}_0$ may reappear only in positions $u^{(n)}_{kh}$ (note that, in general, in these positions we can see any element of the set $\mathcal{A}^{(n)}_j$ which contains $u^{(n)}_0$, so usually not all symbols $u^{(n)}_{kh}$ equal $u^{(n)}_0$).

Hence, the set $\{k : u^{(n)}_k = u^{(n)}_0\}$ contains only multiples of $h$; by the definition of $h^{(n)}$, this, in turn, implies that $h\mid h^{(n)}$. If $\lambda=\ee^{2\pi\ii/m}$ is a continuous eigenvalue of $X_{\boldsymbol{ \theta}}$ which is coprime to all $q_n$'s, we must have that $\lambda = \ee^{2\pi\ii r/h}$ for some $r$,  by the maximality of $h$, and thus $m$ must divide $h$. This implies that $m\mid h^{(n)}$.
\end{proof}

The following result is the generalisation of \cite[Lem.~10]{Dekking1977}. 
Let $h(\boldsymbol{\theta})$ be the  height given in Definition~\ref{def:dynamical-height}, and let $h_{\rm comb}(\boldsymbol{\theta})$ denote the combinatorial height defined in Eq.~\eqref{eq:height-combin}.
\begin{theorem}\label{thm:combinatorial-height}
Let $\boldsymbol{\theta}$ be a  torsion-free directive sequence defined on a se\-quence of bounded alphabets, with length-sequence $(q_n)$.    Then 
\[
h_{\rm comb}(\boldsymbol{\theta})= h({\boldsymbol \theta}).
\]
\end{theorem}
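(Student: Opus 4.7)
The plan is to prove $h(\boldsymbol{\theta}) \leq h_{\rm comb}(\boldsymbol{\theta})$ and $h_{\rm comb}(\boldsymbol{\theta}) \leq h(\boldsymbol{\theta})$ separately; throughout I may assume, by Theorem \ref{thm:sadic-recognizable}, that $\boldsymbol{\theta}$ is injective on letters, as neither quantity changes under the injectivisation. For the lower bound, I would apply Lemma \ref{lem:EV-div} with $m = h(\boldsymbol{\theta})$. By Corollary \ref{cor:MEF-torsion-free}, $\ee^{2\pi \ii / h}$ is a continuous eigenvalue of $(X_{\boldsymbol{\theta}}, \sigma)$ and $h$ is coprime to every $q_n$; the lemma then yields $h \mid h^{(n)}$ for all sufficiently large $n$, whence $h \leq h^{(n)} \leq h_{\rm comb}$.

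For the upper bound, fix $n$. The key preliminary step is to upgrade the defining coprimality of $h^{(n)}$ using torsion-freeness: if a prime $p$ divided both $h^{(n)}$ and some $q_k$ with $k < n$, then $p$ would divide $q_N$ for infinitely many $N$, in particular for some $N \geq n$, contradicting $\gcd(h^{(n)}, q_N) = 1$ for $N \geq n$. Thus $h^{(n)}$ is coprime to every $q_N$. Next, Lemma \ref{lem:comb-height-eigenvalue} applied at level $m = 0$ shows that $(X_{\boldsymbol{\theta}}, \sigma)$ admits a cyclic $\sigma^{h^{(n)}}$-minimal partition, equivalently an equicontinuous factor onto the rotation on $\mathbb{Z}/h^{(n)}\mathbb{Z}$, so $\ee^{2\pi \ii / h^{(n)}}$ is a continuous eigenvalue of $(X_{\boldsymbol{\theta}}, \sigma)$.

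To conclude, I would invoke Corollary \ref{cor:MEF-torsion-free} together with the Pontryagin-dual description in Remark \ref{rem:pontryagin_interpretation}: every continuous eigenvalue of $(X_{\boldsymbol{\theta}}, \sigma)$ lies in the subgroup of $S^1$ generated by $\{\ee^{2\pi \ii / q_0 \cdots q_N} : N \geq 0\} \cup \{\ee^{2\pi \ii / h}\}$. Since $h^{(n)}$ is coprime to every $q_N$, the order-$h^{(n)}$ root of unity $\ee^{2\pi \ii / h^{(n)}}$ is disjoint from the group generated by the $q_N$-th roots and must come entirely from the $\mathbb{Z}/h\mathbb{Z}$ factor, forcing $h^{(n)} \mid h$. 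Taking the supremum over $n$ gives $h_{\rm comb} \leq h$. The only step requiring genuine work is the arithmetic upgrade of coprimality, which is precisely where torsion-freeness is used; the remainder is a direct combination of the preceding lemmas with the structure of the maximal equicontinuous factor.
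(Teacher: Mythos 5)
Your proposal is correct and follows essentially the same route as the paper: the lower bound $h(\boldsymbol{\theta}) \le h_{\rm comb}(\boldsymbol{\theta})$ via Lemma~\ref{lem:EV-div}, and the upper bound $h_{\rm comb}(\boldsymbol{\theta}) \le h(\boldsymbol{\theta})$ via Lemma~\ref{lem:comb-height-eigenvalue} together with the structure of the maximal equicontinuous factor. The paper's proof is terser on the upper bound — it jumps from $\gamma(h^{(k)}) = h^{(k)}$ to $h^{(k)} \mid h(\boldsymbol{\theta})$ in one step — whereas you correctly flag and spell out the two points that make this legitimate: first the upgrade, via torsion-freeness, of the coprimality of $h^{(n)}$ from $(q_N)_{N\ge n}$ to the entire length sequence, and then the Pontryagin-dual argument from Remark~\ref{rem:pontryagin_interpretation} that forces an eigenvalue of order coprime to all $q_N$ to divide $h$.
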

\begin{proof}

As discussed in the proof of Lemma \ref{lem:EV-div}, we can assume that $\boldsymbol\theta$ is injective. 
If $h_{\rm comb}(\boldsymbol{\theta})=h^{(k)}$,  then Lemma  \ref{lem:comb-height-eigenvalue} tells us that $X^{(0)}$ has a $\sigma^{h^{(k)}}$-cyclic partition into $h^{(k)}$ sets, that is,
  $\gamma(h^{(k)}) = h^{(k)} $.  Thus for any $n$, $h^{(n)}$  divides $ h({\boldsymbol \theta})$; in particular, $h_{\rm comb}(\boldsymbol{\theta})\mid h(\boldsymbol{ \theta})$.

As $\boldsymbol{\theta}$ is torsion-free, $\ee^{2\pi\ii/h(\boldsymbol{ \theta})}$ is a continuous eigenvalue satisfying the hypothesis of Lemma~\ref{lem:EV-div}; thus, for all sufficiently large $n$, we must have $h(\boldsymbol{ \theta})\mid h^{(n)}$. Hence, $h(\boldsymbol{ \theta})\le h_{\rm comb}(\boldsymbol{ \theta})$. Together with the previous observation, this gives the desired equality.
\end{proof}

\section{The column number of a constant-length directive sequence}\label{sec:column-number}

	We propose a candidate for the {\em column number} $c({\boldsymbol{\theta}})$ of a constant-length directive sequence, which generalises the definition of the column number for constant-length substitutions.  It also develops preliminary notions of a directive sequence having a coincidence that were discussed in \cite[Section 6]{BCY-2022}. If $\boldsymbol{\theta}$ has length sequence $(q_n)$, and  provided that our directive sequence is quasi-recognizable, we prove that the chosen definition ensures that the maximal equicontinuous factor map is at least $c({\boldsymbol{\theta}})$-to-$1$ and that the fibre cardinality is exactly $c({\boldsymbol{\theta}})$ for at least one orbit. As an application we use the column number to make statements about the nature of the maximal spectral type of these systems  in Section \ref{sec:spectrum}. 
	
	We work with quasi-recognizable directive sequences. To define the col\-umn number $c({\boldsymbol{\theta}})$, we first work with the tiling factor map $\pi_{\rm tile}\colon X_{\boldsymbol{ \theta}}\to\Z_{(q_n)} $ that quasi-recognizability   guarantees. As Corollary \ref{cor:MEF-torsion-free} tells us, the tiling factor map is not necessarily a maximal equicontinuous factor map, but with it we can define a {\em naïve} column number. This is an intermediate step which already gives us the column number for directive sequences with trivial height. We then show, in Theorem \ref{cor:fibre-card-column-num}, that the correct notion of column number of ${\boldsymbol{\theta}}$ 
	is simply that of its pure base as defined in Section \ref{subsec:pure-base}, and which has trivial height by Theorem \ref{thm:height}.

		As in  Section \ref{sec:combinatorial-height}, we will need recognizability. Given a shift space $X_{\boldsymbol \theta}$ generated by a quasi-recognizable directive sequence, we use the  recognizable directive sequence $\boldsymbol {\widehat\theta}$ such that $X_{\boldsymbol \theta}= X_{\boldsymbol {\widehat\theta}}$, guaranteed by Theorem \ref{thm:sadic-recognizable}.
		In what follows we shall make use of this assumption whenever it is convenient to do so.

		If $\theta\colon \mathcal A\rightarrow \mathcal B^{+}$ has length $\ell$, one can describe it using  $\ell$ maps $\theta_i\colon\mathcal A \rightarrow \mathcal B$, $0\leq i \leq \ell-1$, where
\begin{equation}\label{eq-as-perm}
\theta(a) = \theta_0(a)\cdots \theta_{\ell-1}(a) 
\end{equation}
for each $a\in\mathcal A$. We call each $\theta_j$ a {\em column} of $\theta$.

		Let ${\boldsymbol \theta}$ be a constant-length injective directive sequence on a sequence of alphabets $(\mathcal A_n)_{n\ge 0}$ of bounded size, with length sequence $(q_n)_{n\geq 0}$.
		  For $m\geq 0$, and using the notion of the columns in  \eqref{eq-as-perm}, we define 
		  		\[c (\boldsymbol{\theta},m)\coloneqq \inf_{n>m} \left\{\lvert (\theta^{[m,n)})_j(\mathcal{A}_n)\rvert  : 0\le j <  \frac{p_{n}}{p_{m}} \right\}.\] 
				\begin{definition}[Column number]\label{def:column-number}
	Noting that $(c (\boldsymbol{\theta},m))_{m\geq 0}$ is an increasing and bounded sequence, we define the \emph{na\"ive column number}  $\bar{c}({\boldsymbol{\theta}})$ to be
				\[
			\bar{c}({\boldsymbol{\theta}}) \coloneqq \lim_{m\to\infty}  c (\boldsymbol{\theta},m) =  \max_{m\geq 0}  c (\boldsymbol{\theta},m),		\]
		that is, the least cardinality of a column that appears in  $\theta^{[m,n)}$ for some $n>m$, as $n$ tends to infinity. For a non-injective directive sequence, we define $\bar{c}(\boldsymbol{ \theta})\coloneqq\bar{c}(\boldsymbol{\widehat{\theta}})$, that is, the column number of its corresponding injectivisation as given by Theorem \ref{thm:sadic-recognizable}. Let $\boldsymbol{\bar\theta}$ be the pure base of $\boldsymbol{\theta}$. We define the (real) \emph{column number} of the directive sequence $\boldsymbol{ \theta}$, $c(\boldsymbol{ \theta})$, to be the na\"ive column number of its pure base $\boldsymbol{\bar{\theta}}$.
	\end{definition}

		If $\boldsymbol{\theta} = (\theta, \theta, \dotsc)$ is a stationary directive sequence, then  the definition of $c(\theta,m)$ does not depend on $m$, as $\theta^{[m,n)}=\theta^{n-m}$. Also, it equals the definition of the column number for a single substitution, as the least cardinality that appears in some column upon iteration of the pure base $\bar\theta$. This shows that the column number $c(\theta)$ is a direct generalisation of the original notion. Note that the additional injectivity hypothesis does not make a difference in this particular context, as every primitive substitutive subshift is conjugate to one given by an injective substitution, and $c(\theta)$ is a conjugacy invariant, as it only depends on the maximal equicontinuous factor.

	The na\"ive column number $\bar{c}(  {\boldsymbol{\theta}}  )$ is finite and bounded by $\max_{n\ge 0}\lvert\mathcal{A}_n\rvert < \infty$, as every column in $\theta^{[m,n)}$ cannot have more than $\lvert\mathcal{A}_n\rvert$ different symbols. A similar bound immediately follows for $c(\theta)$. 
		 The column number can be equally defined for constant-length directive sequences defined on a sequence $(\mathcal A_n)$ of alphabets of unbounded size, but in this case, it may not be bounded.
	As column cardinalities are integers, then for a fixed value of $m$, there is an $n$ and $j$ such that $(\theta^{[m,n)})_j$ has cardinality $c({\boldsymbol{\theta}},m)$, and this cardinality is achieved as a column cardinality of $ \theta^{[m,n')}$  for all  $n'>n$. As $\bar{c}({\boldsymbol{\theta}})=c({\boldsymbol{\theta},m_0})$ for a sufficiently large $m_0$, we have:
	\begin{enumerate}[label=(\arabic*)]
		\item for any sufficiently large $n$, the morphism $\theta^{[m_0,n)}$ has at least one column with cardinality $\bar{c}({\boldsymbol{\theta}})$ and all columns have cardinality at least $\bar{c}({\boldsymbol{\theta}})$, and
		\item for any $m>m_0$ we may find some $n$ such that $\theta^{[m,n)}$ has a column with cardinality $\bar{c}({\boldsymbol{\theta}})$. 
	\end{enumerate}

	Let ${\boldsymbol \theta}$ be quasi-recognizable with $\pi_{\rm tile}\colon X_{\boldsymbol \theta} \rightarrow \Z_{(q_n)}$ its associated tiling factor map; note that, since we are assuming that $\boldsymbol{ \theta}$ is injective, this implies recognisability. A $\pi_{\rm tile}$-fibre $\pi_{\rm tile}^{-1}(z)$ is called {\em regular} if it has 
	minimal car\-di\-nal\-i\-ty.
	We will prove that  the regular fibres of the factor map $\pi_{\rm tile}$ have cardinality exactly $c({\boldsymbol{\theta}})$. We split the proof into three small lemmas.
	\begin{lemma}\label{lem:column_number_upperbound}
		Let $\boldsymbol{\theta}$ be a quasi-recognizable  directive sequence with length sequence $(q_n)$, 
		defined on a sequence of bounded alphabets. Suppose  that its na\"ive column number is given by $\bar{c}({\boldsymbol{\theta}})=c({\boldsymbol{\theta},0})$. Then, there is some element $z\in\mathbb{Z}_{(q_n)}$ such that $\pi_{\rm tile}^{-1}(z)$ has exactly $\bar{c}({\boldsymbol{\theta}})$ elements.	\end{lemma}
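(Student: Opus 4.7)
The plan is to construct $z$ via an inverse-limit argument on column indices and then compute the fibre cardinality by bounding it separately from below (by minimality) and from above (by recognizability). First, by Theorem~\ref{thm:sadic-recognizable} I may replace $\boldsymbol{\theta}$ by its injectivisation so that $\boldsymbol{\theta}$ is injective on letters, hence recognizable; this preserves $X_{\boldsymbol{\theta}}$, the length sequence, and the column structure. For each $n\geq 0$ set $J_n := \{j\in\{0,\dotsc,p_n-1\} : \lvert(\theta^{[0,n)})_j(\mathcal{A}_n)\rvert = \bar{c}\}$. The hypothesis $\bar{c}=c(\boldsymbol{\theta},0)$ asserts that every column of $\theta^{[0,n)}$ has image of cardinality at least $\bar{c}$, and that $J_n\neq\emptyset$ for all sufficiently large $n$.

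The key observation is that for $j = k p_n + r$ with $0\leq r < p_n$, the factorisation $\theta^{[0,n+1)} = \theta^{[0,n)}\circ\theta^{(n)}$ yields $(\theta^{[0,n+1)})_j(\mathcal{A}_{n+1}) = (\theta^{[0,n)})_r(\theta^{(n)}_k(\mathcal{A}_{n+1})) \subseteq (\theta^{[0,n)})_r(\mathcal{A}_n)$. Hence if $j\in J_{n+1}$, the right-hand set has cardinality $\leq\bar{c}$ and so exactly $\bar{c}$, giving $r = j\bmod p_n \in J_n$. This produces a projective system of non-empty finite sets $J_{n+1}\to J_n$, and by compactness $\varprojlim J_n\neq\emptyset$; any coherent $(j_n)$ therein determines an element $z\in\mathbb{Z}_{(q_n)}$. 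The lower bound $\lvert\pi_{\rm tile}^{-1}(z)\rvert\geq\bar{c}$ then follows from minimality of each $X^{(n)}$: each of the $\bar{c}$ letters in $(\theta^{[0,n)})_{j_n}(\mathcal{A}_n)$ arises as $x_0$ for some $x\in\pi_{\rm tile}^{-1}(z)$ via a standard lifting and compactness argument, and distinct values of $x_0$ give distinct fibre points.

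The main obstacle is the matching upper bound $\lvert\pi_{\rm tile}^{-1}(z)\rvert\leq\bar{c}$. To each $x\in\pi_{\rm tile}^{-1}(z)$ I would associate the coherent sequence of ``central letters'' $a_n := x^{(n)}_0$; recognizability together with the identity $x = \sigma^{j_n}\theta^{[0,n)}(x^{(n)})$ yields $a_n = \theta^{(n)}_{k_n}(a_{n+1})$ for $k_n := (j_{n+1}-j_n)/p_n$, so $(a_n)$ lies in $L := \varprojlim(\mathcal{A}_n,\theta^{(n)}_{k_n})$. The stable images $S_n := \bigcap_{N\geq n}(\theta^{[n,N)})_{(j_N-j_n)/p_n}(\mathcal{A}_N)\subseteq\mathcal{A}_n$ each have cardinality $\bar{c}$, and $\theta^{(n)}_{k_n}$ restricts to a surjection between equal-sized sets $S_{n+1}\to S_n$, hence a bijection; consequently $\lvert L\rvert = \bar{c}$. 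The subtle remaining step is the injectivity of $x\mapsto(a_n)$: this follows from the window identity $x_{[-j_n,\,p_n-j_n)} = \theta^{[0,n)}(a_n)$ once both $j_n\to\infty$ and $p_n-j_n\to\infty$, a condition that can be secured by choosing $(j_n)\in\varprojlim J_n$ via compactness whenever the inverse limit is rich enough, with any remaining boundary cases handled by direct consideration of asymptotic pairs in the minimal shift $X_{\boldsymbol{\theta}}$.
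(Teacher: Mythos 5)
The overall architecture of your proof — construct a carefully chosen $z$, then show the fibre is in bijection with a set of $\bar{c}$ ``central letters'' via an injective map $x\mapsto(a_n)$, using the two-sided growth of supports to get injectivity — matches the paper's strategy in spirit. But the step that constructs $z$ has a genuine error, and everything downstream depends on it.

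The claimed projective system is not one. From the factorisation you correctly derive
\[
(\theta^{[0,n+1)})_j(\mathcal{A}_{n+1}) \;=\; (\theta^{[0,n)})_r\bigl(\theta^{(n)}_k(\mathcal{A}_{n+1})\bigr) \;\subseteq\; (\theta^{[0,n)})_r(\mathcal{A}_n),
\]
with $r=j\bmod p_n$. If $j\in J_{n+1}$ the left-hand set has cardinality $\bar{c}$, and the containment then gives $\lvert(\theta^{[0,n)})_r(\mathcal{A}_n)\rvert\ge\bar{c}$ — not $\le\bar{c}$ as you wrote. The inequality points the wrong way, so $r$ need not lie in $J_n$. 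Concretely, take $\bar{c}=2$, $\mathcal{A}_n=\{a,b,c,d\}$, and a column $r$ of $\theta^{[0,n)}$ sending $a\mapsto x$, $b\mapsto y$, $c,d\mapsto z$ (cardinality $3$, so $r\notin J_n$); if some column $k$ of $\theta^{(n)}$ has image $\{b,c,d\}$, then column $kp_n+r$ of $\theta^{[0,n+1)}$ has image $\{y,z\}$ of cardinality $2$, so it lies in $J_{n+1}$ but projects outside $J_n$. Moreover $J_n$ can be empty for small $n$, since $c(\boldsymbol{\theta},0)$ is an infimum. With the projective limit gone, you have no handle on $z$, and the two things you later need — that the stable sets $S_n$ have cardinality \emph{exactly} $\bar{c}$ (only $\ge\bar{c}$ is automatic), and that $j_n\to\infty$ and $p_n-j_n\to\infty$ — are both unsupported; the final sentence about ``boundary cases handled by direct consideration of asymptotic pairs'' is a gesture, not an argument.

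The paper avoids a global projective limit entirely. It builds $z$ step by step: choose $n_0<n_1<n_2<\cdots$ and, at stage $i$, a column index $j_{i+1}$ of $\theta^{[n_i,n_{i+1})}$ whose image has cardinality exactly $\bar{c}$ (this exists because $c(\boldsymbol{\theta},n_i)=\bar{c}$), enlarging $n_{i+1}$ if needed so that $0<j_{i+1}<p_{n_{i+1}}/p_{n_i}-1$, i.e.\ the chosen column is neither first nor last. Setting $z=\sum_i p_{n_i}j_{i+1}$, the coherence mod $p_{n_i}$ is automatic from composition, the ``neither first nor last'' choice forces the supports of the nested supertiles through the origin to grow to all of $\mathbb{Z}$, and the exact cardinality at each stage — combined with the fact that every column has cardinality $\ge\bar{c}$ — gives the bijections between levels that you were trying to extract from the $S_n$'s. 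Replacing your projective-limit construction of $z$ by this stagewise construction, and then running your upper/lower bound argument with these $j_{n_i}$'s, would repair the proof.
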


	\begin{proof}
		For any $n\ge 1$, any point $x\in X_{\boldsymbol{\theta}}$ is a concatenation of $n_0$-th order supertiles $\theta^{[0,n_0)}(a)$ for some $a\in\mathcal{A}_{n_0}$ \cite[Lemma 4.2]{BSTY-2019}. Fix $n_0$.  As $\bar{c}({\boldsymbol{\theta}})=c({\boldsymbol{\theta},0})\le c({\boldsymbol{\theta},n_0})\le \bar{c}({\boldsymbol{\theta}})$,  there must be some $n_1>n_0$ such that $\theta^{[n_0,n_1)}$ has a column with cardinality $\bar{c}({\boldsymbol{\theta}})$; by taking a larger $n_1$ if needed, we can ensure that such a column is neither the first nor the last column of the morphism $\theta^{[n_0,n_1)}$. Let $0 < j_1 < \left(p_{n_1}/p_{n_0}\right) - 1 $ be the index of this column.
		
		The $n_1$-supertiles are concatenations of $\left(p_{n_1}/p_{n_0}\right)$ $n_0$-supertiles. This implies that the $j_1$-st of these $n_0$-supertiles equals $\theta^{[0,n_0)}(a)$ with $a\in M\subseteq\mathcal{A}_0, \lvert M\rvert=\bar{c}({\boldsymbol{\theta}})$. Furthermore, as no column of $\theta^{[0,n_0)}$ can have cardinality less than $\bar{c}({\boldsymbol{\theta}})$, the restriction of $\theta^{[0,n_0)}$ to $M$ is injective. That is, any point $x\in X_{\boldsymbol{\theta}}$ such that $\pi_{\rm tile}(x)\equiv p_{n_0}j_1 \pmod{p_{n_1}}$ has exactly one of $\bar{c}({\boldsymbol{\theta}})$ different supertiles with support $[0,p_{n_0})$

		We can iterate this process, and find some $n_2>n_1$ such that $\theta^{[n_1,n_2)}$  has a column, with index $j_{2}$, such that it has cardinality $\bar{c}({\boldsymbol{\theta}})$; this is possible as a consequence of property (2) stated above. Once again we may assume that $0 < j_{2} < \left(p_{n_2}/p_{n_1}\right) - 1$, i.e. this is neither the first nor the last column. Thus, every point in $X_{\boldsymbol{\theta}}$ is a concatenation of $n_2$-supertiles $\theta^{[0,n_2)}(a)$, each of which is a concatenation of $n_1$-supertiles. Also, the $j_{2}$-th of these $n_1$-supertiles is of the form $\theta^{[0,n_1)}(a)$ for some $a\in M'\subseteq\mathcal{A}_{n_1}$, with $M'$ of cardinality $c({\boldsymbol{\theta}})$ by the same argument as above.
		
		Thus, if we have some $x\in X_{\boldsymbol{\theta}}$ such that $\pi_{\rm tile}(x) \equiv p_{n_1}j_{2}+p_{n_0}j_{1}\pmod{p_{n_2}}$, the $n_1$-th order supertile of $x$ passing through the origin is one of $\bar{c}({\boldsymbol{\theta}})$ possible options, and its $j_{1}$-th component $n_0$-supertile is one of $\bar{c}({\boldsymbol{\theta}})$ different possible options as well. As no columns with cardinality less than $\bar{c}({\boldsymbol{\theta}})$ appear, there is a bijection, induced by the $j_{1}$-th column of the morphism $\theta^{[n_0,n_1)}$, between the $\bar{c}({\boldsymbol{\theta}})$ possible options for the $n_0$-th order supertile of $x$ at $[0,p_{n_0})$ and the $\bar{c}({\boldsymbol{\theta}})$ possible options for the $n_1$-th order supertile of $x$ that passes through the origin. Note that the support of this $n_1$-th order supertile contains both positive and negative integers, and after future iterations of the same process, the support of the $n_k$-supertile obtained by this process grows to $(-\infty, \infty)$ as $k\to\infty$.
		
		We iterate the above procedure, converging to an  infinite sum $z:=\sum_i p_{n_i}j_{i+1}\in \mathbb{Z}_{(q_n)}$. The above argument shows, in summary, that:
		\begin{itemize}
			\item there exists a sequence of supertiles $w_1,w_2,\dotsc$ of increasing size such that each supertile $w_k$ is one of the component supertiles of $w_{k+1}$, and $w_k$ determines $w_{k+1}$ uniquely,
			\item there are $\bar{c}({\boldsymbol{\theta}})$ possible options for $w_1$, and thus there exist exactly $\bar{c}({\boldsymbol{\theta}})$ possible sequences,
			\item if $\pi_{\rm tile}(x)=z$, the supertile of corresponding size that passes through the origin is forced to be one of the $w_k$, and
			\item by the choice of $z$, the support of $w_k$ in $x$ grows to infinity in both directions as $k\to\infty$.
		\end{itemize}
		Thus, each of the $\bar{c}({\boldsymbol{\theta}})$ choices for $w_1$ determines $x$ entirely, hence there are only $\bar{c}({\boldsymbol{\theta}})$ possible elements of $X_{\boldsymbol{\theta}}$ for which $\pi_{\rm tile}(x)=z$, as desired.
	\end{proof}

	\begin{lemma}\label{lem:column_num_lowerbound}
	Let $\boldsymbol{\theta}$ be a quasi-recognizable  directive sequence with constant-length sequence $(q_n)$, defined on a sequence of bounded alphabets.  Then
	any fibre $\pi_{\rm tile}^{-1}(z)$ of the tiling factor map has at least  $c({\boldsymbol{\theta},0})$ elements.	\end{lemma}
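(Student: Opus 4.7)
My plan is to reduce the problem to bounding the size of a nested intersection of column images.

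First, by Theorem \ref{thm:sadic-recognizable}, I may replace $\boldsymbol{\theta}$ with its recognisable injectivisation without altering $X_{\boldsymbol{\theta}}$, the tiling factor $\pi_{\rm tile}$, or (by the definition of the column number via the injectivisation) the value of $c(\boldsymbol{\theta},0)$; so I assume throughout that $\boldsymbol{\theta}$ is recognisable. Fix $z \in \mathbb{Z}_{(q_n)}$, write $j_n := z \bmod p_n$, and for $x \in \pi_{\rm tile}^{-1}(z)$ let $a_n(x) \in \mathcal{A}_n$ denote the unique letter for which $x|_{[-j_n,\, p_n - j_n - 1]} = \theta^{[0,n)}(a_n(x))$ (as provided by the proof of Lemma \ref{quasi-almost-recog}). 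Because $a_n$ is a function into the finite set $\mathcal{A}_n$, one has $|\pi_{\rm tile}^{-1}(z)| \ge |a_n(\pi_{\rm tile}^{-1}(z))|$, so it suffices to show that the image of $a_n$ has cardinality at least $c(\boldsymbol{\theta},0)$.

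The next step is to identify this image, for any $n$, with the nested intersection
\[
\tilde{\mathcal{A}}_n \;:=\; \bigcap_{m > n} (\theta^{[n,m)})_{j^{(n)}_m}(\mathcal{A}_m), \qquad j^{(n)}_m := \frac{j_m - j_n}{p_n}.
\]
One inclusion follows from tracking positions: writing $\theta^{[0,m)}(a) = \theta^{[0,n)}(\theta^{[n,m)}(a))$ and inspecting the $j_m$-th letter yields the column factorisation $(\theta^{[0,m)})_{j_m} = (\theta^{[0,n)})_{j_n} \circ (\theta^{[n,m)})_{j^{(n)}_m}$, and specialising to $a_m(x)$ gives $a_n(x) = (\theta^{[n,m)})_{j^{(n)}_m}(a_m(x))$, so the image lies inside each intersected set. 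For the reverse inclusion I invoke K\"onig's lemma on the tree whose nodes at level $m$ are compatible tuples $(a_n,\dotsc,a_m)$ satisfying $a_k = (\theta^{(k)})_{k_k}(a_{k+1})$ with $k_k := (j_{k+1} - j_k)/p_k$; the boundedness of the alphabets makes this tree finitely branching, and the assumption $a \in \tilde{\mathcal{A}}_n$ ensures a non-empty layer at every level, so an infinite path supplies a compatible sequence $(a_m)_{m \ge n}$. Extending this deterministically downward and stacking the nested supertiles $\theta^{[0,m)}(a_m)$ produces at least one $x \in \pi_{\rm tile}^{-1}(z)$ with $a_n(x) = a$, using minimality of $X_{\boldsymbol{\theta}}$ to complete $x$ on the other half-line when $z$ corresponds to an integer of the odometer.

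Finally, the quantitative lower bound on $|\tilde{\mathcal{A}}_n|$---the main obstacle, in my view---comes directly from the column factorisation. Since $(\theta^{[0,n)})_{j_n}$ is a function,
\[
\bigl\lvert(\theta^{[n,m)})_{j^{(n)}_m}(\mathcal{A}_m)\bigr\rvert \;\ge\; \bigl\lvert(\theta^{[0,m)})_{j_m}(\mathcal{A}_m)\bigr\rvert \;\ge\; c(\boldsymbol{\theta},0)
\]
for every $m > n$, the second inequality being the definition of $c(\boldsymbol{\theta},0)$. As the left-hand sides form a decreasing chain of positive integers bounded below by $c(\boldsymbol{\theta},0)$, the intersection $\tilde{\mathcal{A}}_n$ inherits the same lower bound, and $|\pi_{\rm tile}^{-1}(z)| \ge |\tilde{\mathcal{A}}_n| \ge c(\boldsymbol{\theta},0)$ as claimed.
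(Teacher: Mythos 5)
Your proof is correct and is, in substance, the paper's own argument: the paper likewise works with the nested, decreasing chain of column images at the position dictated by $z$ (your $\tilde{\mathcal{A}}_0$, in its language the sets of possible $0$-th coordinates of points carrying a level-$m$ supertile at $[-Z_m,p_m-Z_m-1]$), bounds each by $c(\boldsymbol{\theta},0)$ from the definition, and produces at least $c(\boldsymbol{\theta},0)$ distinct points of $\pi_{\rm tile}^{-1}(z)$ as accumulation points of carefully chosen representatives — this is your K\"onig's-lemma step in slightly less formal dress, and passing to such accumulation points is also the cleanest way to justify your ``complete $x$ on the other half-line'' step, since each representative $\sigma^{j_m}\theta^{[0,m)}(w^{(m)})$ with $w^{(m)}_0=a_m$ already has tiling image $\equiv j_m \pmod{p_m}$. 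One further remark: the initial passage to the injectivisation $\boldsymbol{\widehat\theta}$ is unnecessary — the paper explicitly notes (in the remark preceding Example~\ref{ex:naive_col_num_noninjective}) that injectivity plays no role in this lemma, and your argument already goes through at $n=0$ where $a_0(x)=x_0$ is well-defined regardless of injectivity; if you keep the detour, you should also record that $c(\boldsymbol{\theta},0)=c(\boldsymbol{\widehat\theta},0)$, which holds because $\widehat\theta^{[0,n)}\circ\tau_n=\theta^{[0,n)}$ with $\tau_n$ onto, so the column images agree.
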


	\begin{proof}
		Given $z\in \Z_{(q_n)}$, there is a sequence of integers $(Z_j)$ such that $Z_{j+1}\equiv Z_j \pmod{p_j}$ and $Z_j\rightarrow z$.
			Given $a\in \mathcal A_{j+1}$,	
		let $x^{(a,j)}$ be some element of $X_{\boldsymbol{\theta}}$ such that $x^{(a,j)}\rvert_{[-Z_j,p_j-Z_j-1]} = \theta^{[0,j+1)}(a)$, that is, such that its central supertile is $\theta^{[0,j+1)}(a)$ and the $Z_j$-th column of this supertile is at the origin. 
				By the definition of $c({\boldsymbol{\theta}},0)$, the set $U_j =  \{x^{(a,j)}  :  a\in\mathcal{A}_{j+1} \}$ contains at least $c({\boldsymbol{\theta}},0)$
		 different elements, which differ pairwise in their $0$-th coordinate. Thus, we may partition $U_j$ into $c({\boldsymbol{\theta}},0)$ or more disjoint sets $U_{j,b}$ with $b\in\mathcal{A}_0$, given by    $x\in U_{j,b}$ if     $x_0=b$.
				
		By the fact that  $Z_{j+1}\equiv Z_j \pmod{p_j}$, if $U_{j,b}$ is non-empty, then $U_{k,b}\ne\varnothing$ for any $k<j$ as well. Thus, there exists some set $\mathcal{B}\subseteq\mathcal{A}_0$ with $\lvert\mathcal{B}\rvert\ge c(\boldsymbol{\theta},0)$ such that $U_{j,b}$ is non-empty for every value of $j$ and every $b\in\mathcal{B}$. For every $b\in\mathcal{B}$, we may find an accumulation point $x^{(b)}$   of some sequence $y^{(j)}\in X_{\boldsymbol{\theta}}$ with $y^{(j)}\in U_{j,b}$; and $x^{(b)}_0=b$ so that the $x^{(b)}$'s are all distinct and there are at least $c(\boldsymbol{\theta},0)$ of them.
		
		Also, by the choice of $Z_j$ and the condition $Z_j\equiv Z_{j+1}\pmod{p_j}$, each $x^{(b)}$ is guaranteed to have a $j$-th order supertile with support $[-Z_j,p_j-Z_j-1]$. Hence, $\pi_{\rm tile}(x^{(b)}) \equiv  Z_j \pmod{p_j}$, by choice; this naturally implies that $\pi_{\rm tile}(x^{(b)})=z$, and since we have $\lvert\mathcal{B}\rvert\ge c(\boldsymbol{\theta},1)$ different elements whose image is $z$, the desired conclusion holds.
	\end{proof}

	Thus, if the na\"ive column number $\bar{c}(\boldsymbol{\theta})$ equals $c(\boldsymbol{\theta},0)$, we can guarantee that the tiling factor map  $\pi_{\rm tile}\colon X_{\boldsymbol{\theta}} \rightarrow \mathbb{Z}_{(q_n)}$  is somewhere $\bar{c}({\boldsymbol{\theta}})$-to-$1$ and $\bar{c}({\boldsymbol{\theta}})$ is the smallest possible value of $k$ for which $\pi_{\rm tile}$ is $k$-to-$1$. The following simple result generalises this to a large collection of directive sequences:
	
	\begin{theorem}\label{thm:odom-factor-k}
	Let $\boldsymbol{\theta}$ be an injective  quasi-recognizable  directive sequence with constant-length sequence $(q_n)$, defined on a sequence of bounded al\-pha\-bets. Then the tiling factor map $\pi_{\rm tile}\colon X_{\boldsymbol{\theta}} \rightarrow \mathbb{Z}_{(q_n)_{n\geq 0}}$ is somewhere $\bar{c}({\boldsymbol{\theta}})$-to-$1$, and it is not $k$-to-$1$ anywhere for any $k<\bar{c}({\boldsymbol{\theta}})$.
	\end{theorem}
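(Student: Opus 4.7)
The plan is to reduce the statement to the setting of Lemmas \ref{lem:column_number_upperbound} and \ref{lem:column_num_lowerbound}, which handle the case where $\bar{c}(\boldsymbol{\theta})$ is already attained at level $0$. Since the sequence $(c(\boldsymbol{\theta}, m))_{m \geq 0}$ increases and stabilises at $\bar{c}(\boldsymbol{\theta})$, choose $m_0 \geq 0$ with $c(\boldsymbol{\theta}, m_0) = \bar{c}(\boldsymbol{\theta})$. Consider the tail directive sequence $\boldsymbol{\theta}' = (\theta^{(m_0+n)})_{n \geq 0}$, with shift $X^{(m_0)}$ and length sequence $(q_{m_0+n})_{n \geq 0}$. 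A direct comparison of definitions shows $c(\boldsymbol{\theta}', n) = c(\boldsymbol{\theta}, m_0+n)$ for every $n \ge 0$, so $c(\boldsymbol{\theta}', 0) = \bar{c}(\boldsymbol{\theta}') = \bar{c}(\boldsymbol{\theta})$.

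Since $\boldsymbol{\theta}$ is injective and quasi-recognizable, Lemma \ref{quasi-almost-recog} gives that $\boldsymbol{\theta}$ is recognizable, and then $\boldsymbol{\theta}'$ is recognizable directly from Definition \ref{def:recog}. Hence $\boldsymbol{\theta}'$ is quasi-recognizable with tiling factor map $\pi'_{\rm tile}\colon X^{(m_0)} \to \mathbb{Z}_{(q_{m_0+n})}$. Applying Lemma \ref{lem:column_number_upperbound} to $\boldsymbol{\theta}'$ produces some $z' \in \mathbb{Z}_{(q_{m_0+n})}$ with $\lvert (\pi'_{\rm tile})^{-1}(z') \rvert = \bar{c}(\boldsymbol{\theta})$, while Lemma \ref{lem:column_num_lowerbound} applied to $\boldsymbol{\theta}'$ ensures that every fibre of $\pi'_{\rm tile}$ contains at least $\bar{c}(\boldsymbol{\theta})$ points.

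It remains to transfer these fibre statements from $\pi'_{\rm tile}$ to $\pi_{\rm tile}$. Recognizability of $\theta^{[0, m_0)}$ on $X^{(m_0)}$ ensures that every $x \in X_{\boldsymbol{\theta}}$ admits a unique representation $x = \sigma^k \theta^{[0, m_0)}(y)$ with $y \in X^{(m_0)}$ and $0 \le k < p_{m_0}$, giving a bijection $\Phi\colon X^{(m_0)} \times \{0, \dots, p_{m_0} - 1\} \to X_{\boldsymbol{\theta}}$. Chasing supertile supports, an $n$-th order supertile at the origin in $y$ (with respect to $\boldsymbol{\theta}'$) becomes, after applying $\theta^{[0, m_0)}$ and shifting by $k$, an $(m_0 + n)$-th order supertile in $\Phi(y,k)$; unwinding the coordinates one obtains the identity
\[
\pi_{\rm tile}(\Phi(y, k)) \;=\; k \,+\, p_{m_0} \cdot \pi'_{\rm tile}(y)
\]
under the canonical set-theoretic bijection $\mathbb{Z}_{(q_n)} \leftrightarrow \{0,\dots,p_{m_0} - 1\} \times \mathbb{Z}_{(q_{m_0 + n})}$ sending $z \mapsto (z \bmod p_{m_0},\, (z - (z \bmod p_{m_0}))/p_{m_0})$. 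Consequently $\Phi$ restricts to a bijection $(\pi'_{\rm tile})^{-1}(z') \times \{k\} \to \pi_{\rm tile}^{-1}(z)$ whenever $z$ corresponds to $(k, z')$, so the multiset of fibre cardinalities of $\pi_{\rm tile}$ coincides with that of $\pi'_{\rm tile}$, and the two conclusions of the previous paragraph yield the theorem.

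The main (mild) obstacle is the careful bookkeeping in verifying the tiling identity $\pi_{\rm tile} \circ \Phi = k + p_{m_0} \cdot \pi'_{\rm tile}$; once this is unwound, the fibre correspondence and the conclusion are immediate. The rest of the argument is a clean reduction via the tail construction, with the injectivity hypothesis used only to pass from quasi-recognizability to recognizability via Lemma \ref{quasi-almost-recog}, which is what makes $\Phi$ a bijection.
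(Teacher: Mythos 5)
Your proof is correct and follows essentially the same route as the paper: pass to the tail directive sequence at a level where the column number is already attained at level $0$, apply Lemmas \ref{lem:column_number_upperbound} and \ref{lem:column_num_lowerbound} there, and transfer the fibre counts back to $\pi_{\rm tile}$ using the injectivity of $\theta^{[0,m_0)}$. The paper's transfer step is written more tersely (it checks fibres of the form $z0^{m_0}$ and appeals to the factor structure), whereas your explicit bijection $\Phi$ and the identity $\pi_{\rm tile}\circ\Phi = k + p_{m_0}\cdot\pi'_{\rm tile}$ spell out the full fibre correspondence, but the content is the same.
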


	\begin{proof}
		Let $n$ be the smallest integer such that $\bar{c}({\boldsymbol{\theta}})=c({\boldsymbol{\theta},n})$, and consider the directive sequence
		 $\boldsymbol{\vartheta}=(\theta^{(n)},\theta^{(n+1)},\dotsc)$
		 obtained from $\boldsymbol{\theta}$ by removing the first $n$ morphisms.
		  Then $\bar{c}(\boldsymbol{\vartheta})=c(\boldsymbol{\vartheta},0)$ and $\boldsymbol{\vartheta}$ is quasi-recognizable, so the factor map $\pi'\colon X_{\boldsymbol{\vartheta}} \rightarrow \mathbb{Z}_{(q_j)_{j\geq n}}$
		cannot be less than $\bar{c}(\boldsymbol{\vartheta})$-to-1 by Lemma~\ref{lem:column_num_lowerbound}, and the existence of a fibre with cardinality $\bar{c}(\boldsymbol{\vartheta})$ is guaranteed by Lemma~\ref{lem:column_number_upperbound}. As $c(\boldsymbol{\theta},m+n)=c(\boldsymbol{\vartheta},m)$ and the sequence $c(\boldsymbol{\theta},k)$ is increasing, it is not hard to see that $\bar{c}({\boldsymbol{\theta}})=\bar{c}(\boldsymbol{\vartheta})$.
		
		Note that every point $x$ in $X^{(n)}$ gives birth to $q_0\cdots q_{n-1}$ points in $X_{\boldsymbol{\theta}}$, namely 
		$\sigma^j\theta^{[0,n)}(x)$, $0\leq j\leq p_{n-1}-1$, which are all distinct since $\theta^{[0,n)}$ is injective. Let $\bar \pi\colon  X^{(n)}\rightarrow \Z_{(q_j)_{j\geq n}}$ and let $\pi_{\rm tile}\colon X_{\boldsymbol \theta}\rightarrow \Z_{(q_j)_{j\geq 0}}$ be the respective factor maps. If $\bar\pi(x)=z $, then $\pi_{\rm tile} (   \sigma^j\theta^{[0,n)}(x)   )= zw_j$ where $w_j$ is the expansion of $j$ with respect to the base $(p_n)_{n\geq 0}$. If $\lvert\bar\pi^{-1}(z)\rvert=c$, injectivity of $\theta^{[0,n)}$ implies that $\lvert\pi_{\rm tile}^{-1}(z0^n)\rvert= \lvert\bar\pi^{-1}(z)\rvert=c$, i.e. that all fibres have cardinality at least $\bar{c}(  \boldsymbol{\theta}   )$. 
	\end{proof}

	As the na\"ive column number $\bar{c}(\boldsymbol{ \theta})$ is defined in terms of the injectivisation of $\boldsymbol{ \theta}$, the following is an immediate consequence:

	\begin{corollary}\label{cor:odom-factor-k}
		Let $\boldsymbol{\theta}$ be a torsion-free directive sequence defined on a sequence of bounded alphabets.
		Then its tiling factor map 
		 is somewhere $\bar{c}({\boldsymbol{\theta}})$-to-$1$, and it is not $k$-to-$1$ anywhere for any $k<\bar{c}({\boldsymbol{\theta}})$.
\end{corollary}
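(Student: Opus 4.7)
The plan is to deduce this corollary almost immediately by reducing to the already-established Theorem \ref{thm:odom-factor-k}, which gives the same conclusion but under the extra hypothesis that the directive sequence is injective on letters. The two ingredients needed for the reduction are already in place: Theorem \ref{thm:dekking-sadic} ensures that every torsion-free directive sequence on a bounded sequence of alphabets is quasi-recognizable, and Theorem \ref{thm:sadic-recognizable} lets us pass to an injective recognizable representation without altering the shift space or the length sequence.

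Concretely, I would first invoke Theorem \ref{thm:dekking-sadic} to produce a tiling factor map $\pi_{\rm tile}\colon X_{\boldsymbol{\theta}}\to \Z_{(q_n)}$ for the given torsion-free $\boldsymbol{\theta}$. If $\boldsymbol{\theta}$ is already injective on letters, Theorem \ref{thm:odom-factor-k} applies directly. Otherwise, I would replace $\boldsymbol{\theta}$ by its injectivisation $\boldsymbol{\widehat\theta}$ furnished by Theorem \ref{thm:sadic-recognizable}; this sequence is recognizable, hence quasi-recognizable, and defined on a sequence of bounded alphabets. The key point to extract from the construction in that theorem is that the length sequence $(q_n)$ is preserved and the associated tiling factor map $\pi_{\rm tile}$ is unchanged, since $X_{\boldsymbol{\theta}}=X_{\boldsymbol{\widehat\theta}}$ and both sequences have the same supertile structure modulo the relabelling.

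With these identifications in hand, Theorem \ref{thm:odom-factor-k} applied to $\boldsymbol{\widehat\theta}$ yields that $\pi_{\rm tile}$ is somewhere $\bar{c}(\boldsymbol{\widehat\theta})$-to-$1$ and nowhere $k$-to-$1$ for any $k<\bar{c}(\boldsymbol{\widehat\theta})$. By the very definition of the naïve column number for a non-injective directive sequence (Definition \ref{def:column-number}), we have $\bar{c}(\boldsymbol{\theta})=\bar{c}(\boldsymbol{\widehat\theta})$, so the conclusion transfers verbatim to $\boldsymbol{\theta}$.

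The only potential subtlety, and the step I would be most careful about, is verifying that the tiling factor map is genuinely the same object after the injectivisation; that is, that the $\Z_{(q_n)}$-valued factor coming from $\boldsymbol{\widehat\theta}$ coincides with the one coming from $\boldsymbol{\theta}$ up to the identification $X_{\boldsymbol{\theta}}=X_{\boldsymbol{\widehat\theta}}$. Since the injectivisation construction only identifies letters whose $\theta^{(n)}$-images coincide and keeps all supertile boundaries intact, the cyclic $\sigma^{p_n}$-partitions defining $\pi_{\rm tile}$ are literally the same on both sides, so no work is required beyond this observation.
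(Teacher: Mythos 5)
Your proposal is correct and follows exactly the route the paper intends: the corollary is stated there as an immediate consequence of Theorem~\ref{thm:odom-factor-k} precisely because $\bar{c}(\boldsymbol{\theta})$ is defined via the injectivisation, and you have correctly filled in the details — Theorem~\ref{thm:dekking-sadic} gives quasi-recognizability, Theorem~\ref{thm:sadic-recognizable} gives an injective recognizable $\boldsymbol{\widehat\theta}$ with the same shift space, length sequence, and tiling factor map, and $\bar{c}(\boldsymbol{\theta})=\bar{c}(\boldsymbol{\widehat\theta})$ by Definition~\ref{def:column-number}. Your attention to the subtlety that the tiling factor map must be identified across the injectivisation is exactly the right thing to check and is handled correctly by the observation that the cyclic $\sigma^{p_n}$-partitions are unchanged.
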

	
	As noted above, all the above results relate the na\"ive column number $\bar{c}(\boldsymbol{ \theta})$ with the fibres of the tiling factor map over the odometer $\mathbb{Z}_{(q_n)}$ determined by the lengths of the morphisms in the directive sequence $\boldsymbol{ \theta}$. 
	  Despite the fact that       the na\"ive column number does not necessarily convey information about the fibres of $\pi_{\rm MEF}$, in what follows      
	  we show that the ``true'' column number $c(\boldsymbol{ \theta})=\bar{c}(\boldsymbol{ \bar  \theta})$ has the same relationship with the cardinality of the fibres of $\pi_{\rm MEF}$ as the tiling factor map onto $\mathbb{Z}_{(q_n)}$ fibres with the na\"ive column number $\bar{c}(\boldsymbol{ \theta})$.
		
	\begin{corollary}
		\label{cor:fibre-card-column-num}
			Let $\boldsymbol{\theta}$ be a torsion-free directive sequence defined on a sequence of bounded alphabets, 
						and let $\pi_{\rm MEF}\colon X_{\boldsymbol{\theta}} \rightarrow \mathbb{Z}_{(q_n)_{n\geq 0}}\times \Z/h\Z$ be its maximal equi\-con\-ti\-nuous factor map, where $h$ is the height of the directive sequence $\boldsymbol{ \theta}$. Then $\pi_{\rm MEF}$ is somewhere $c({\boldsymbol{\theta}})$-to-$1$, and it is not $k$-to-$1$ anywhere for any $k<c({\boldsymbol{\theta}})$.
	\end{corollary}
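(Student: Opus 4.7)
The plan is to reduce the statement to Corollary~\ref{cor:odom-factor-k} applied to the pure base $\boldsymbol{\bar{\theta}}$. By Theorem~\ref{thm:height}, we have a conjugacy $(X_{\boldsymbol{\theta}},\sigma)\cong(X_{\boldsymbol{\bar{\theta}}}\times\{0,\ldots,h-1\},T)$, where $\boldsymbol{\bar{\theta}}$ is torsion-free and has trivial height. Since $\boldsymbol{\bar\theta}$ has trivial height, Corollary~\ref{cor:MEF-torsion-free} says its maximal equicontinuous factor coincides with its tiling factor $\pi_{\mathrm{tile}}^{\bar\theta}\colon X_{\boldsymbol{\bar\theta}}\to\mathbb{Z}_{(q_n)}$. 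Thus the first task is to identify $\pi_{\rm MEF}$, up to the conjugacy, as the product map $(x,i)\mapsto(\pi_{\mathrm{tile}}^{\bar\theta}(x),i)$ into $\mathbb{Z}_{(q_n)}\times\mathbb{Z}/h\mathbb{Z}$.

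To verify this identification, I would first check that the product map above is a factor map from $(X_{\boldsymbol{\bar\theta}}\times\{0,\ldots,h-1\},T)$ to an equicontinuous rotation. A direct calculation using the definition of $T$ shows that under this map, $T$ corresponds to addition of the generator $(0,1)$ in the levels $0\le i<h-1$ and to $(1,1-h)$ when $i=h-1$; this is exactly rotation by the topological generator of the monothetic group $\mathbb{Z}_{(q_n)}\times\mathbb{Z}/h\mathbb{Z}$, since $h$ is coprime to each $q_n$ by Corollary~\ref{cor:MEF-torsion-free}. Hence this map factors through $\pi_{\rm MEF}$. But Corollary~\ref{cor:MEF-torsion-free} also states that the maximal equicontinuous factor of $(X_{\boldsymbol{\theta}},\sigma)$ is $\mathbb{Z}_{(q_n)}\times\mathbb{Z}/h\mathbb{Z}$, so the two maps must coincide (up to a group translation, which is irrelevant for fibre cardinalities).

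Once this identification is in place, the fibre $\pi_{\rm MEF}^{-1}(z,i)$ corresponds, via the conjugacy, to $(\pi_{\mathrm{tile}}^{\bar\theta})^{-1}(z)\times\{i\}$. Hence every $\pi_{\rm MEF}$-fibre has the same cardinality as some $\pi_{\mathrm{tile}}^{\bar\theta}$-fibre, and conversely. Applying Corollary~\ref{cor:odom-factor-k} to $\boldsymbol{\bar\theta}$ (which is torsion-free by Theorem~\ref{thm:height} and defined on alphabets of bounded size, since $|\mathcal{B}_n|\le|\mathcal{A}_n|^h$) gives that $\pi_{\mathrm{tile}}^{\bar\theta}$ is somewhere $\bar{c}(\boldsymbol{\bar\theta})$-to-$1$ and never $k$-to-$1$ for any $k<\bar{c}(\boldsymbol{\bar\theta})$. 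Since $c(\boldsymbol{\theta})=\bar{c}(\boldsymbol{\bar\theta})$ by Definition~\ref{def:column-number}, the stated conclusion for $\pi_{\rm MEF}$ follows.

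The main subtlety I expect is the identification step: confirming rigorously that the natural map $(x,i)\mapsto(\pi_{\mathrm{tile}}^{\bar\theta}(x),i)$ really is (up to rotation) the maximal equicontinuous factor map, rather than a strictly smaller equicontinuous factor. This will boil down to comparing cardinalities of the target groups, which is handled by Corollary~\ref{cor:MEF-torsion-free}, so the reduction is clean once the bookkeeping for the suspension map $T$ is spelled out.
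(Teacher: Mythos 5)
Your overall strategy is the one the paper follows: pass to the pure base $\boldsymbol{\bar\theta}$ via the conjugacy $\varphi$ of Theorem~\ref{thm:height}, identify $\pi_{\rm MEF}$ explicitly in terms of the pure base's MEF map, and then invoke Corollary~\ref{cor:odom-factor-k}. However, the explicit identification you propose is incorrect, and this is not a bookkeeping detail that the cardinality comparison will resolve.

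The map $\psi(x,i) = \bigl(\pi_{\rm tile}^{\bar\theta}(x),\, i\bigr)$ is not equivariant with any rotation on $\Z_{(q_n)}\times\Z/h\Z$. Indeed, you computed the two increments yourself: for $0\le i < h-1$ the suspension $T$ corresponds to adding $(0,1)$, while for $i=h-1$ it corresponds to adding $(1,\,1-h)=(1,1)$. These are \emph{distinct} elements of $\Z_{(q_n)}\times\Z/h\Z$ (the first coordinate differs), so there is no single generator $g$ with $\psi\circ T = R_g\circ\psi$. In particular $\psi$ is not a factor map onto a group rotation, so it cannot factor through $\pi_{\rm MEF}$, and the argument collapses at the identification step. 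Flagging this as ``the main subtlety'' was the right instinct, but the cure is not merely to check that the two candidate target groups have the same size; the map itself must be twisted. The paper uses
\[
\pi_{\rm MEF}(x) = \bigl(h\cdot\bar{\pi}_{\rm MEF}(\bar{x}) + m_x,\; m_x\bigr),\qquad\text{where } \varphi(x)=(\bar{x},m_x),
\]
which is well defined because $h$ is invertible in $\Z_{(q_n)}$ (Corollary~\ref{cor:MEF-torsion-free}). One checks directly that this intertwines $T$ with addition of $(1,1)$ in both regimes: for $0\le m_x < h-1$ both coordinates increase by $1$, while for $m_x=h-1$ the first coordinate jumps by $h = (h-1)+1$ and the second resets to $0 \equiv h \pmod h$, exactly as required. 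With this corrected map the fibre over $(z,m)$ corresponds to $\bar{\pi}_{\rm MEF}^{-1}\bigl((z-m)/h\bigr)\times\{m\}$, not to $\bigl(\pi_{\rm tile}^{\bar\theta}\bigr)^{-1}(z)\times\{m\}$; the cardinality conclusion is the same, and the rest of your reduction to Corollary~\ref{cor:odom-factor-k} then goes through as you describe.
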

	
	\begin{proof}
		If $\boldsymbol{ \theta}$ has trivial height, this follows from Corollaries~\ref{cor:MEF-torsion-free}~and~\ref{cor:odom-factor-k}. 
		More generally, we can write an explicit form for the maximal equicontinuous factor map $\pi_{\rm MEF}\colon X_{\boldsymbol{ \theta}}\to\Z_{(q_n)}\times \Z/h\Z$ in terms of the tiling factor map of its pure base, that is, the pure base's MEF map $\bar{\pi}_{\rm MEF}\colon X_{\boldsymbol{ \bar  \theta}}\to \Z_{(q_n)}$, as follows. Recall the map $\varphi\colon X_{\boldsymbol{ \theta}}\to X_{\boldsymbol{ \bar\theta}}\times \Z/h\Z$ that is the conjugacy between $(X_{\boldsymbol{ \theta}},\sigma)$ and the suspension $(X_{\boldsymbol{ \theta}},T)$, given by Theorem~\ref{thm:height}. Then we can take
			\[\pi_{\rm MEF}(x) = \left(h\cdot\bar{\pi}_{\rm MEF}(\bar{x}) + m_x,m_x\right),\text{ where }\varphi(x) = (\bar{x},m_x).\]
		 We are implicitly using the fact that $h$ is coprime to all $q_n$'s, as the above definition relies on the equality $h\cdot\Z_{(q_n)} = \Z_{(q_n)}$, which is  true as the integer $h$ has a multiplicative inverse in $\Z_{(q_n)}$.
Then the preimage of  $(z,m)\in \Z_{(q_n)}\times \Z/h\Z$ under $\pi_{\rm MEF}$ as defined above is given by:
			\begin{align*} x=\varphi(\bar{x},m_x)\in\pi_{\rm MEF}^{-1}(z,m)&\iff h \cdot\bar{\pi}_{\rm MEF}(\bar{x}) + m_x = z \mbox{ and } m = m_x\\ & \iff \bar{\pi}_{\rm MEF}(\bar{x}) = \frac{z-m}{h}.\end{align*}
				We see that $m$ determines $m_x$ entirely, so the set of possible $x$ is in a 1-1 correspondence with the set of possible $\bar{x}$, which belong in the fibre $\bar{\pi}_{\rm MEF}^{-1}((z-m)/h)$. By Corollary~\ref{cor:odom-factor-k}, this fibre cannot have less than $\bar{c}(\boldsymbol{ \bar  \theta})=c(\boldsymbol{ \theta})$ elements, so the same holds for the fibres of $\pi_{\rm MEF}$. Also, there exists some $z^*\in\Z_{(q_n)}$ such that $\lvert \bar{\pi}_{\rm MEF}^{-1}(z^*) \rvert = c(\boldsymbol{ \theta})$; choosing an arbitrary $m$ and taking $z = h\cdot z^* + m$ gives $(z,m)$ as an element of $\Z_{(q_n)}\times \Z/h\Z$ with exactly $c(\boldsymbol{ \theta})$ preimages under $\pi_{\rm MEF}$, as desired.
	\end{proof}

	\begin{remark}
		In the proofs of Lemmas~\ref{lem:column_number_upperbound}~and~\ref{lem:column_num_lowerbound}, the injectivity hypothesis in the definition of $\bar{c}(\boldsymbol{ \theta})$ is not used at all, and the proof of Theorem~\ref{thm:odom-factor-k} only uses injectivity up to level $n$. This allows us to give an estimate on fibre cardinalities from any directive sequence, not necessarily injective. However, the injectivity property in the definition of both the na\"ive and true column numbers is essential to get sharp bounds as in Corollaries~\ref{cor:odom-factor-k}~and~\ref{cor:fibre-card-column-num}; 
		as the following example shows.
	\end{remark}

	\begin{example}\label{ex:naive_col_num_noninjective}
		Consider the directive sequence $\boldsymbol{ \alpha}=(\varrho,\vartheta,\vartheta,\dotsc)$ from Examples~\ref{ex:quasi_recognizable_but_not_recognizable}~and~\ref{ex:injectivisation-2}. It is not hard to verify that $c(\boldsymbol{ \alpha},m)=2$ for any $m\ge 1$, which is a consequence of the fact that $X_\vartheta$ has height $2$; thus, if we ignore the injectivity hypothesis in the definition, we could say that its na\"ive column number equals $\bar{c}(\boldsymbol{ \alpha})=2$.
		
		However, as $X_{\boldsymbol{ \alpha}}=X_\theta$ has a coincidence (and thus has column number $1$ in the classical sense, being an almost 1-1 extension of the underlying odometer), the lower bound from Theorem~\ref{thm:odom-factor-k} does not apply in this situation. 
		The bounds given by Lemmas~\ref{lem:column_number_upperbound}~and~\ref{lem:column_num_lowerbound}  still apply to the directive sequence $\boldsymbol{ \alpha}$, and  moving from $\boldsymbol{ \alpha}$ to its injectivisation $\boldsymbol{ \widehat\alpha} = (\theta,\theta,\dotsc)$ gives us the correct result as in Corollary~\ref{cor:odom-factor-k}.
	\end{example}

	\color{black}

We end this section with the question: Does   the height $h(\boldsymbol{ \theta})$ always divide the naïve column number $\bar{c}(\boldsymbol{ \theta})$? In the case where  $\boldsymbol{ \theta}$ is stationary, Lema\'{n}czyk and M\"{u}llner showed that this is true 
\cite[Lemma 2.3]{Lemanczyk-Muellner}. An investigation of their proof suggests that this seems to be the case, at least when the corresponding $S$-adic shift is uniquely ergodic.

\section{Mixed spectrum and discontinuous eigenvalues} \label{sec:spectrum}

In this section, we use the concept of a column number that we introduced in Section \ref{sec:column-number} to identify finer properties of the  spectrum of the measure-theoretic dynamical system  $(X_{\boldsymbol{\theta}},\sigma,\mu)$, where  $\mu$ is a $\sigma$-invariant measure on $X_{\boldsymbol{\theta}}$. 
Consider the Hilbert space $L^{2}(X,\mu)$. Let $H_d\subseteq L^{2}(X,\mu)$ be the closure of the span of all measurable eigenfunctions. We say that  $(X_{\boldsymbol{\theta}},\sigma,\mu)$ has {\em discrete spectrum (pure point spectrum)} if $L^{2}(X,\mu)=H_d$. Otherwise, we say that  $(X_{\boldsymbol{\theta}},\sigma,\mu)$  has {\em mixed spectrum}, i.e., the unitary operator $U_{\sigma}\colon f\mapsto f\circ \sigma$ admits both discrete and continuous spectral components.

We begin by recalling Dekking's result relating continuous spectrum and the column number
of a constant-length substitution. Recall that aperiodic primitive substitutions are uniquely
ergodic \cite{Michel-1976}.	
As in Section~\ref{sec:column-number}, $\theta$ here corresponds to the stationary directive sequence $\boldsymbol{\theta}=(\theta,\theta,\ldots)$ and $c(\theta)=c(\boldsymbol{\theta})$.

\begin{theorem}[\hspace{1sp}{\cite[Theorem~7]{Dekking1977}}]
Let $\theta$ be an aperiodic primitive constant-length substitution. Then $(X_{\theta},\sigma,\mu)$ has discrete spectrum if and only if $c(\theta)=1$.  Otherwise, it has mixed spectrum.
\end{theorem}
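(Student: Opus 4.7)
The strategy is a two-step reduction: first to trivial height via the pure base, then a direct translation of the column number into the fibre structure of the MEF map. I would begin by applying Theorem~\ref{thm:height} to replace $\theta$ by its pure base $\bar\theta$, observing that $(X_\theta,\sigma)$ is conjugate to a constant-height-$h$ suspension of $(X_{\bar\theta},\sigma)$. The suspension contributes only the $h$-th roots of unity to the spectrum (a purely discrete contribution), so $(X_\theta,\sigma,\mu)$ has discrete spectrum iff $(X_{\bar\theta},\sigma,\bar\mu)$ does, and similarly for the presence of a continuous spectral component. Moreover $c(\theta)=c(\bar\theta)$ by Definition~\ref{def:column-number}. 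Thus I may assume $\theta=\bar\theta$ has trivial height, in which case Corollary~\ref{cor:MEF-torsion-free} identifies the MEF of $(X_\theta,\sigma)$ with the odometer $(\Z_\ell,+1)$ via the factor map $\pi$.

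For the forward direction, suppose $c(\theta)=1$. Then Corollary~\ref{cor:fibre-card-column-num} gives that $\pi$ is somewhere $1$-to-$1$; unpacked at the substitution level, this is the existence of a \emph{coincidence}, namely some column of $\theta^N$ consists of a single letter, for some $N\ge 1$. Iterating this observation (looking at columns of $\theta^{N+k}$ and tracking back through the supertile structure), one produces a $\pi^{-1}\mathcal{B}(\Z_\ell)$-measurable function that recovers $x_0$, and hence by equivariance every $x_i$, from $\pi(x)$ for $\mu$-a.e.\ $x$. This shows $\pi$ is a measure-theoretic isomorphism, so $(X_\theta,\sigma,\mu)$ inherits the purely discrete spectrum of $(\Z_\ell,+1,\mathrm{Haar})$.

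For the reverse direction, suppose $c(\theta)>1$. Corollary~\ref{cor:fibre-card-column-num} now forces every fibre of $\pi$ to have cardinality at least $c(\theta)>1$, so the pullback $\pi^{*}L^{2}(\Z_\ell,\mathrm{Haar})$ is a \emph{proper} closed subspace of $L^{2}(X_\theta,\mu)$. By Host's theorem, every measurable eigenvalue of a primitive constant-length substitution shift is a continuous eigenvalue and hence, by Corollary~\ref{cor:MEF-torsion-free} and the discussion in Section~\ref{sec:spectrum-background}, is encoded in the MEF; thus every measurable eigenfunction of $(X_\theta,\sigma,\mu)$ is $\pi$-measurable. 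The non-trivial orthogonal complement therefore contains no eigenfunctions, so the restriction of the Koopman operator $U_\sigma$ to this complement has non-trivial continuous spectrum, yielding mixed spectrum.

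The main obstacle is the deeper external ingredient used in the reverse direction: Host's theorem that every measurable eigenvalue of a primitive constant-length substitution shift is continuous, which is not established in the excerpt. A secondary technical point is the iterative refinement turning a single coincidence into an a.e.-defined measurable recovery function in the forward direction; this is essentially Kamae's and Dekking's original argument and uses the substitution (rather than general $S$-adic) structure. Without stationarity, one would instead need the somewhat delicate fact that for a uniquely ergodic minimal almost automorphic system, the MEF factor map is a measure-theoretic isomorphism, which bypasses the coincidence bookkeeping at the cost of a less constructive argument.
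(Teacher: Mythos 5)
This is a cited result (Dekking~1977, Theorem~7), so the paper does not supply a proof of it; the closest the paper comes is its $S$-adic generalization in Propositions~\ref{prop:discrete-spec} and~\ref{prop:mixed-spec}, which the proof of the reverse direction should really be compared against.

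Your forward direction is the right shape, and you correctly flag that the coincidence $\Rightarrow$ a.e.~recovery step is the Kamae--Dekking bookkeeping. One small correction to your closing remark: for a general $S$-adic (Toeplitz-like) shift, almost automorphy alone does \emph{not} imply that $\pi$ is a measure isomorphism; one needs \emph{regularity} of the extension, i.e.\ Eq.~\eqref{eq:regular-ext}. This is precisely why Proposition~\ref{prop:discrete-spec} carries that hypothesis. In the stationary case regularity holds automatically because primitivity plus constant length forces equal letter frequencies, which is the point you should be making explicit.

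The real gap is in the reverse direction, at the sentence ``\dots forces every fibre of $\pi$ to have cardinality at least $c(\theta)>1$, so the pullback $\pi^{*}L^{2}(\Z_\ell,\mathrm{Haar})$ is a \emph{proper} closed subspace.'' Topological fibre cardinality does not by itself control the measure-theoretic disintegration $\mu=\int\mu_z\,d\nu(z)$: it is a priori consistent that almost every $\mu_z$ is a point mass even though every topological fibre has $\ge c$ points, in which case $\pi^*L^2$ would exhaust $L^2(X_\theta,\mu)$. To close this you need either (a) Dekking's measure computation showing that for a primitive constant-length substitution the conditional measures on the fibres are uniform on the non-coincident column orbit (this is where equal frequencies enter again), or (b) the route the paper itself takes for the non-stationary generalization: appeal to the Barge--Kellendonk criterion, Theorem~\ref{thm:coincidence-rank} (which requires the full-measure-distal-fibres hypothesis), or to the Fuhrmann--Gr\"oger--Lenz theorems \ref{thm:mean-ppspec}--\ref{thm:mean-irreg}, where the relevant hypothesis is the topologically checkable statement that the extension is \emph{irregular} with finite fibres. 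The latter is exactly what Corollary~\ref{cor:fibre-card-column-num} delivers, and is how the paper's Proposition~\ref{prop:mixed-spec} proceeds. As for Host's theorem: you correctly identify it as external, but note that for the constant-length case the relevant input is due to Dekking himself (the paper attributes the ``all measurable eigenvalues are continuous'' fact to \cite{Dekking1977}, not to Host, when it rules out the second alternative in Proposition~\ref{prop:mixed-spec}).
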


Discrete spectrum, which is a measure-theoretic property, is connected to the notion of mean equicontinuity for topological dynamical systems; see \cite{ABKL-2015, FuhrmannGrogerLenz2022}. Let $(X,\sigma)$ be a topological dynamical system, where $X$ is a compact metric space and $\sigma$ a homeomorphism, so that it defines a $\Z$-action on $X$. The {\em Weyl metric} $d_{\textnormal{W}}$ is defined on $X$ as 
\[
d_{\textnormal{W}}(x,y)\coloneqq \limsup_{n-m\to\infty}\frac{1}{n-m}\sum_{i=m}^{n-1}d(\sigma^{i}x,\sigma^{i}y)
\]
where $d$ is the metric on $X$. The system is $(X,\sigma)$ is called \emph{Weyl mean equicontinous} or just \emph{mean equicontinuous} if for every $\varepsilon>0$, there exists a $\delta>0$ such that 
$d(x,y)<\delta$ implies $d_{\textnormal{W}}(x,y)<\varepsilon$.
This is a generalisation of the notion of equicontinuity mentioned in Section~\ref{sec:spectrum-background}.

Let $(X,G)$ and $(Y,G)$ be topological dynamical systems. Let $\pi\colon X\to Y$ be a factor map and, for a $G$-invariant measure $\mu$ on $X$, let $\pi(\mu)$ be the corresponding pushforward measure on $Y$. 
We call $(X,G)$ a \emph{regular extension} of $(Y,G)$ if 
\begin{equation}\label{eq:regular-ext}
\pi(\mu)\left(\big\{ y\in \pi(X) : \lvert\pi^{-1}(y)\rvert=1\big\}\right)=1
\end{equation}
for any $G$-invariant measure $\mu$ on $X$. Otherwise, $(X,G)$ is called an \emph{irregular extension} of $(Y,G)$. 

Recall that two points $x,\bar x$ are {\em proximal} if there is a sequence $(g_k)$ of group elements such that $d (g_k( x), g_k(\bar x) )\rightarrow 0$ as $k\rightarrow \infty$. Otherwise they are {\em distal}.
Define the \emph{minimal rank} (\textbf{mr}) of a dynamical system to be the minimum cardinality of elements in a fibre over the MEF, and the 
 \emph{coincidence rank} (\textbf{cr}) to be  the maximal number of mutually distal points in a fibre.   A fibre is {\em distal} if any two points in the fibre are distal.
  If the dynamical system is minimal, then it can be seen that the \textbf{cr} is constant over different fibres, so that $\textbf{cr}\leq \textbf{mr}$.
The following result, due to  Barge and Kellendonk, relates discrete spectrum to \textbf{mr} and \textbf{cr} as follows.  (see \cite[Theorem~4.12]{ABKL-2015} and \cite[Theorem~2.25]{BK-2013}). 

\begin{theorem}\label{thm:coincidence-rank}
Let $(X,G)$ be a minimal system with finite coincidence rank. Suppose that the set of distal fibres over the MEF has full Haar measure. Let $\mu$ be an ergodic probability measure on $X$. Then the following are equivalent:
\begin{itemize}
\item $\textnormal{\textbf{cr}}=1$.
\item The system is an almost $1$-$1$ extension of its maximal equicontinuous factor.
\item The continuous eigenfunctions generate $L^{2}(X,\mu)$.
\end{itemize}
Moreover, if one of these conditions hold, then $(X,G)$ is uniquely ergodic. 
\end{theorem}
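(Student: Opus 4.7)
I would establish the circular chain $(1) \Rightarrow (2) \Rightarrow (3) \Rightarrow (1)$ and then deduce unique ergodicity from $(2)$. The essential background fact used throughout is that the maximal equicontinuous factor $(Z,G)$, being a minimal equicontinuous system on a monothetic group, is uniquely ergodic with Haar measure $\eta$; consequently $\pi_{\textrm{MEF}*}\mu = \eta$ for any ergodic $\mu$ on $X$. Minimality of $(X,G)$ furthermore guarantees that $\textnormal{\textbf{cr}}$ is constant across fibres, so it suffices to exhibit a single fibre realising a given value.

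For $(1) \Rightarrow (2)$: if $\textnormal{\textbf{cr}}=1$, then any two distinct points in a common fibre must be proximal, so every distal fibre reduces to a singleton. The hypothesis that the set of distal fibres has full Haar measure in $Z$ therefore forces $\eta$-almost every fibre of $\pi_{\textrm{MEF}}$ to be a singleton; since this locus is $G$-invariant and of full Haar measure, it is dense in $Z$, yielding the topological almost $1$-$1$ property of $\pi_{\textrm{MEF}}$.

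For $(2) \Rightarrow (3)$: an almost $1$-$1$ extension of a uniquely ergodic minimal equicontinuous system is measure-theoretically isomorphic to the base with respect to any invariant measure, because the (meagre) exceptional set of multi-point fibres carries no $G$-invariant mass. This isomorphism conjugates the Koopman operator of $\sigma$ on $L^2(X,\mu)$ to that of the Haar rotation on $L^2(Z,\eta)$, whose eigenbasis consists of characters of $Z$; these pull back via $\pi_{\textrm{MEF}}$ to continuous eigenfunctions on $X$, which therefore span $L^2(X,\mu)$. For $(3) \Rightarrow (1)$: every continuous eigenfunction factors through the MEF, so if such functions generate $L^2(X,\mu)$ then $\pi_{\textrm{MEF}}$ is a measure-theoretic isomorphism and $\eta$-almost every fibre is a singleton. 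Intersecting this full-measure set with the full-measure set of distal fibres produces a distal singleton fibre, which realises $\textnormal{\textbf{cr}}=1$; by minimality this value propagates to all fibres.

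Unique ergodicity follows from the standard fact that almost $1$-$1$ extensions of uniquely ergodic minimal systems are uniquely ergodic, applied to $(Z,G,\eta)$. The main obstacle is the passage from measure-theoretic to topological information, especially in $(3) \Rightarrow (1)$: pure point spectrum with continuous eigenfunctions yields only a $\mu$-almost everywhere singleton-fibre condition, which a priori says nothing about which specific fibres are singletons. The full-Haar-measure hypothesis on distal fibres is exactly the bridge that lets one intersect this measure-theoretic locus with a concrete topological invariant of fibres to produce a distal singleton fibre, from which minimality propagates $\textnormal{\textbf{cr}}=1$ to all fibres.
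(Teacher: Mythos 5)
This theorem is not proved in the paper; it is cited verbatim from Barge--Kellendonk \cite{BK-2013} and Aujogue--Barge--Kellendonk--Lenz \cite{ABKL-2015}, so there is no in-paper argument to compare against. Evaluating your proof on its own merits, there are genuine gaps.

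Your step $(2)\Rightarrow(3)$ and your deduction of unique ergodicity both rest on the claim that an almost $1$-$1$ extension of a uniquely ergodic minimal equicontinuous system is automatically measure-theoretically isomorphic to (and uniquely ergodic over) the base ``because the meagre exceptional set of multi-point fibres carries no $G$-invariant mass.'' This is false: meagre sets can have full Haar measure. Irregular Toeplitz flows --- discussed in this very paper via \cite{Williams} and \cite{DownarowiczLacroix1998} --- are almost $1$-$1$ extensions of odometers in which the exceptional set has positive (even full) Haar measure, and they need not be uniquely ergodic nor measure-isomorphic to the base. The entire point of the ``distal fibres have full Haar measure'' hypothesis is to rule out precisely this failure, and it must be invoked in these steps rather than a nonexistent general principle about meagreness. (You do correctly establish in $(1)\Rightarrow(2)$ that singleton fibres have full Haar measure, and in a circular chain that fact is available downstream; but as written, $(2)\Rightarrow(3)$ is argued from scratch and is unsound.)

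The step $(3)\Rightarrow(1)$ also has a gap. From ``continuous eigenfunctions generate $L^2(X,\mu)$'' you correctly get that $\pi_{\rm MEF}$ is a measure-theoretic isomorphism mod $0$, but you then conclude that ``$\eta$-almost every fibre is a singleton.'' This does not follow: a mod-$0$ isomorphism only gives full-measure sets $X_0\subset X$, $Z_0\subset Z$ between which $\pi_{\rm MEF}$ is bijective, and says nothing about the cardinality of the topological fibre $\pi_{\rm MEF}^{-1}(z)$, which may contain additional points outside $X_0$. To close the argument one needs the nontrivial structural input (present in \cite{BK-2013,ABKL-2015}) that, under the full-measure-of-distal-fibres hypothesis, $\mu$ disintegrates as the uniform probability on the $\textbf{cr}$ mutually distal points over $\eta$-a.e.\ $z$. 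Only then does an $L^2$-isomorphism with $L^2(Z,\eta)$ force $\textbf{cr}=1$: otherwise one can build a function in $L^2(X,\mu)$ that is non-constant on $\mu$-a.e.\ fibre, hence orthogonal to all pullbacks of continuous eigenfunctions. Your ``intersect full-measure sets to find a distal singleton fibre'' argument begs this disintegration, which is the real content of the theorem.
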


It follows from Corollary~\ref{cor:fibre-card-column-num} that 
for a torsion-free $S$-adic directive sequence $\boldsymbol{\theta}$ on a sequence of bounded alphabets, if $c(\boldsymbol{\theta})=1$, then $\textbf{cr}=\textbf{mr}=c(\boldsymbol{\theta})$. Moreover, the condition on the distal fibres having full Haar measure is equivalent to Eq.~\eqref{eq:regular-ext}. We then get the following sufficient condition for discrete spectrum for torsion free $S$-adic sequences, which is a consequence of Theorem~\ref{thm:coincidence-rank} and Corollary~\ref{cor:fibre-card-column-num}.

\begin{proposition}\label{prop:discrete-spec}
Let ${\boldsymbol \theta}$  be a 
torsion-free  directive sequence defined on a sequence of bounded alphabets. Let $\mu$ be an ergodic invariant measure on $(X_{\boldsymbol{\theta}},\sigma)$. Suppose Eq.~\eqref{eq:regular-ext} holds and that $c(\boldsymbol{\theta})=1$. Then 
\begin{itemize}
\item $(X_{\boldsymbol{\theta}},\sigma)$ is uniquely ergodic, and
\item $(X_{\boldsymbol{\theta}},\sigma,\mu)$ has discrete spectrum  with continuous eigenfunctions. 
\end{itemize}
\end{proposition}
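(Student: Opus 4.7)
The plan is to apply Theorem~\ref{thm:coincidence-rank} directly, so the task reduces to verifying its two hypotheses for the system $(X_{\boldsymbol{\theta}},\sigma)$: that the coincidence rank is finite (in fact equals $1$), and that the set of distal fibres over the maximal equicontinuous factor has full Haar measure. Once both are in hand, the equivalence in Theorem~\ref{thm:coincidence-rank} gives that $\textnormal{\textbf{cr}}=1$ is equivalent to the continuous eigenfunctions generating $L^{2}(X_{\boldsymbol{\theta}},\mu)$, which yields discrete spectrum with continuous eigenfunctions, and also yields unique ergodicity as the final clause of that theorem.

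The first hypothesis is immediate from the results of Section~\ref{sec:column-number}. By Corollary~\ref{cor:MEF-torsion-free} and the torsion-free assumption, the maximal equicontinuous factor is $\mathbb{Z}_{(q_n)}\times \mathbb{Z}/h\mathbb{Z}$ with $h$ coprime to all $q_n$. Corollary~\ref{cor:fibre-card-column-num} then asserts that the MEF factor map $\pi_{\rm MEF}$ is somewhere $c(\boldsymbol{\theta})$-to-$1$ and never less than $c(\boldsymbol{\theta})$-to-$1$. Since $c(\boldsymbol{\theta})=1$ by assumption, there is at least one point in the MEF with a single preimage, so the minimal rank satisfies $\textnormal{\textbf{mr}}=1$. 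Because $\textnormal{\textbf{cr}}\leq \textnormal{\textbf{mr}}$ in a minimal system, we deduce $\textnormal{\textbf{cr}}=1$, which in particular is finite.

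For the second hypothesis, note that the MEF is a minimal rotation on a compact metrisable abelian group, so it is uniquely ergodic with Haar measure as the unique invariant measure; in particular, for the ergodic measure $\mu$ on $X_{\boldsymbol{\theta}}$, the pushforward $\pi_{\rm MEF}(\mu)$ coincides with Haar measure on $\mathbb{Z}_{(q_n)}\times \mathbb{Z}/h\mathbb{Z}$. The regularity hypothesis~\eqref{eq:regular-ext} then asserts that Haar-almost every fibre of $\pi_{\rm MEF}$ is a singleton. A singleton fibre is trivially distal (the distality condition is vacuous), so the set of distal fibres has full Haar measure, as required.

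With both hypotheses verified, Theorem~\ref{thm:coincidence-rank} applies and delivers both conclusions at once: the system is uniquely ergodic, and the continuous eigenfunctions span $L^{2}(X_{\boldsymbol{\theta}},\mu)$, which is exactly the statement that $(X_{\boldsymbol{\theta}},\sigma,\mu)$ has discrete spectrum with continuous eigenfunctions. The only step that requires care is the translation between the regularity condition~\eqref{eq:regular-ext} and the ``distal fibres of full Haar measure'' hypothesis of Theorem~\ref{thm:coincidence-rank}; this relies on the identification of $\pi_{\rm MEF}(\mu)$ with Haar measure, which in turn uses unique ergodicity of the MEF as a minimal group rotation. No part of the argument is computationally delicate; the content is bookkeeping of the column-number machinery and the structure of the MEF established earlier in the paper.
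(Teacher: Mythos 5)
Your argument is correct and follows the paper's own reasoning essentially verbatim: the proposition is deduced from Theorem~\ref{thm:coincidence-rank} once Corollary~\ref{cor:fibre-card-column-num} together with $c(\boldsymbol{\theta})=1$ gives $\textnormal{\textbf{mr}}=\textnormal{\textbf{cr}}=1$, and the regularity hypothesis~\eqref{eq:regular-ext} (with $\pi_{\rm MEF}(\mu)$ identified with Haar measure via unique ergodicity of the MEF) supplies the full-measure-distal-fibres hypothesis. Your observation that singleton fibres are trivially distal cleanly justifies the implication needed, and minimality — required by Theorem~\ref{thm:coincidence-rank} — holds because $\boldsymbol{\theta}$ is torsion-free.
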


The equivalences in Theorem~\ref{thm:coincidence-rank} are related to mean equicontinuity.  In the minimal case, we have the following result due to Fuhrmann, Gr\"{o}ger and Lenz; see \cite{FuhrmannGrogerLenz2022}. Here, $G$ is a general (sigma-compact, amenable) group acting  continuously on $X$. In our setting, $G=\mathbb{Z}$. We also refer the reader to \cite{LTY-2015,GR-2019} for weaker notions which are equivalent to discrete spectrum (without necessarily requiring all eigenfunctions to be continuous).

\begin{theorem}[\hspace{1sp}\textnormal{\cite[Cor.~1.6]{FuhrmannGrogerLenz2022}}]\label{thm:mean-ppspec}
Let $(X,G)$ be minimal. Then $(X,G)$ is mean equicontinuous if and only if {it is uniquely ergodic  and has discrete spectrum with continuous eigenfunctions.}
\end{theorem}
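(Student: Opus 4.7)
The plan is to prove the two implications separately. The central observation linking them is that a continuous eigenfunction $f\colon X\to S^{1}$ with eigenvalue $\lambda\in S^{1}$ satisfies
\[
\lvert f(\sigma^{n}x)-f(\sigma^{n}y)\rvert = \lvert f(x)-f(y)\rvert
\]
for every $n\in\mathbb{Z}$, so the pointwise deviation of $f$ between two points is invariant along orbits. Consequently, finite linear combinations of such eigenfunctions have orbit-constant pairwise deviations, and this is the mechanism that couples the continuity of eigenfunctions with the Weyl pseudometric $d_{\textnormal{W}}$.

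For the ``if'' direction, assume $(X,\sigma)$ is uniquely ergodic with discrete spectrum and continuous eigenfunctions. Given $\varepsilon>0$, the plan is: first, using density of finite combinations of continuous eigenfunctions in $L^{2}(\mu)$ together with the fact that products of eigenfunctions are again eigenfunctions, approximate the continuous symmetric function $d\colon X\times X\to\mathbb{R}$ in $L^{2}(\mu\otimes\mu)$ by a finite sum $d_{N}(x,y)=\sum_{k=1}^{N}\lvert c_{k}f_{\lambda_{k}}(x)-c_{k}f_{\lambda_{k}}(y)\rvert$ up to error $\varepsilon/3$; second, use unique ergodicity to turn this $L^{2}$-closeness into uniform smallness of the long-run averages of the continuous function $d-d_{N}$; third, invoke the continuity of each $f_{\lambda_{k}}$ together with the orbit-invariance identity above to bound $d_{\textnormal{W}}(x,y)$ by $d_{N}(x,y)+\varepsilon/3<\varepsilon$ whenever $d(x,y)<\delta$ for a $\delta$ depending on $\varepsilon$ and on the finite family $\{f_{\lambda_{1}},\dotsc,f_{\lambda_{N}}\}$.

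For the ``only if'' direction, assume mean equicontinuity. Unique ergodicity comes first, from a standard $3\varepsilon$ argument: mean equicontinuity forces the Birkhoff averages of any continuous function to be uniformly Cauchy in the starting point, and minimality forces the common limit to be constant, hence equal to the integral against a unique invariant measure $\mu$. Next, form the equivalence relation $x\sim y\iff d_{\textnormal{W}}(x,y)=0$. Mean equicontinuity makes $\sim$ a closed, $\sigma$-invariant relation whose quotient $X/{\sim}$ is a compact metric space on which $\sigma$ descends to an isometry in the induced metric, hence is equicontinuous. By the Halmos--von Neumann theorem the quotient is conjugate to a minimal rotation on a compact monothetic group, whose continuous characters span $L^{2}(X/{\sim})$. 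Pulling these back through the factor map $\pi\colon X\to X/{\sim}$ produces continuous eigenfunctions on $X$, and the final step is to verify that they span $L^{2}(X,\mu)$.

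The main obstacle will be this last spanning step, which amounts to showing that $\pi\colon X\to X/{\sim}$ is a measure-theoretic isomorphism, equivalently, that $\mu$-almost every $d_{\textnormal{W}}$-equivalence class is a singleton. A natural attack is to use mean equicontinuity together with unique ergodicity to prove that the self-joining of $\mu$ relative to $\pi$ is concentrated on the diagonal of $X\times X$; this reduces to a Weyl-type uniform equidistribution result for the $\mathbb{Z}$-action on $X\times X$ along the fibres of $\pi\times\pi$, and the careful execution of this equidistribution argument is the technical heart of the theorem. A subsidiary concern will be to check that the eigenfunctions pulled back via $\pi$ remain continuous on $X$, which follows once $\pi$ is known to be continuous, since the characters on the quotient are continuous by the Halmos--von Neumann structure.
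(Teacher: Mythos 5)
This theorem is not proved in the paper: it is imported verbatim from the reference \cite{FuhrmannGrogerLenz2022} as Corollary~1.6 there, so there is no internal argument for you to be compared against. Your proposal is therefore a genuinely independent reconstruction, and since the paper relies on it as a black box, it is worth flagging where it is not yet a proof.

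The ``if'' direction has a real gap as written. The quantity $d_{N}(x,y)=\sum_{k}\lvert c_{k}f_{\lambda_{k}}(x)-c_{k}f_{\lambda_{k}}(y)\rvert$ is, by the orbit-invariance identity you correctly observe, a $(\sigma\times\sigma)$-invariant function on $X\times X$. The distance $d$ is not invariant under the diagonal action (unless the system is essentially equicontinuous to begin with), so the two cannot be $L^{2}(\mu\otimes\mu)$-close; this is not a matter of a denser approximating family but of trying to approximate a non-invariant function by an invariant one. Even if the approximation step is reorganised (say, by approximating $d$ on $X\times X$ by a finite sum of tensors $f_{j}\otimes\overline{g_{k}}$ of eigenfunctions, which is legitimate because eigenfunctions span $L^{2}(\mu)$ and hence tensors span $L^{2}(\mu\otimes\mu)$), the second step still fails: the diagonal action of $\sigma$ on $X\times X$ is not uniquely ergodic in general, so there is no uniform-in-the-starting-point convergence of Birkhoff averages available on $X\times X$, and the upgrade from $L^{2}$-smallness to uniform smallness of the long-run averages of $d-d_{N}$ is precisely what is missing. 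The mechanism in the literature goes instead through the factor map onto the MEF: continuity and $L^{2}$-density of eigenfunctions are used to show the MEF map is an $L^{2}$-isomorphism, and unique ergodicity together with regularity of the fibres is used to control time averages of $d$ in terms of distance in the MEF; your sketch does not capture this reduction.

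For the ``only if'' direction, your outline (unique ergodicity via a $3\varepsilon$ argument, closed invariant relation $\sim$, isometric quotient, Halmos--von Neumann, pullback, and then the a.e.-injectivity of $\pi$) is the right skeleton, and you correctly identify the spanning step as the technical heart. As stated it is not a proof, but your assessment of where the difficulty lies is accurate; the a.e.-injectivity of $\pi$ is established in \cite{FuhrmannGrogerLenz2022} via a regularity argument that replaces your proposed Weyl-type equidistribution step.
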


\begin{remark}
Unlike in the case of substitutions, the primitivity of a directive sequence is not sufficient to guarantee unique ergodicity. In \cite{FerencziFisherTalet2009}, a condition for unique ergodicity was given for a  minimal constant-length directive sequence $\boldsymbol{\theta}=(\theta^{(0)},\theta^{(1)},\ldots)$ in the case where the substitution matrix $M_i$ of $\theta^{(i)}$ is the symmetric matrix
$M_i=\begin{pmatrix}
1 & n_i\\
n_i & 1
\end{pmatrix}$, where $n_i\in\mathbb{N}$. The authors show that 
 the shift $(X_{\boldsymbol{\theta}},\sigma)$ is uniquely ergodic if and only if $\sum_i \frac{1}{n_i}=\infty$; see \cite[Prop.~3.1]{FerencziFisherTalet2009} and more generally \cite[Example 5.5]{BKY-2014}.  Both these references work with Bratteli-Vershik systems, but as substitutions on a two letter alphabet are recognizable, $(X_{\boldsymbol{\theta}},\sigma)$ is  almost-conjugate to the corresponding Bratteli-Vershik system \cite[Theorems 4.6 and  6.5]{BSTY-2019}.\end{remark}	

A {\em  Toeplitz shift} is a  shift $(X,\sigma)$, $X\subset \mathcal A^{\Z}$ with $\mathcal A$ finite, which is an almost automorphic extension of an odometer and hence minimal. A  Toeplitz shift al\-ways has a representation as a quasi-recognizable $S$-adic shift \cite[Theorem~8]{GjerdeJohansen2000}.
Note that they always have column number  1, by Lemma~\ref{lem:column_num_lowerbound}. We also briefly remark that the Toeplitz shifts which satisfy the regularity condition in Eq.~\ref{eq:regular-ext} coincide with the family of \emph{regular Toeplitz shifts}, as presented in \cite{Williams} and \cite{DownarowiczLacroix1998}. 

In certain instances, e.g., when the system does not admit a fibre of infinite cardinality over the MEF, the failure of mean equicontinuity can be traced back to  Eq.~\eqref{eq:regular-ext}.

\begin{theorem}[\hspace{-0.06em}{\cite[Cor.~7.7]{FuhrmannGrogerLenz2022}}]\label{thm:mean-irreg}
Suppose that $(X,G)$ is an irregular  extension of $(Y,G)$ via the factor map $\pi\colon X\to Y$ and suppose that $(X,G)$ and $(Y,G)$ have the same maximal e\-qui\-con\-tin\-u\-ous factor.
If the fibres of $\pi$ are finite then $(X,G)$ cannot be mean equicontinuous. 
\end{theorem}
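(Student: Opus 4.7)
The plan is to argue by contradiction, using Theorem~\ref{thm:mean-ppspec}, unique ergodicity, and the uniformity of the disintegration of the unique invariant measure over the finite fibres of $\pi$. I assume the implicit standing context that $(X,G)$ and $(Y,G)$ are minimal, as this is needed for Theorem~\ref{thm:mean-ppspec} to apply.

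Suppose $(X,G)$ is mean equicontinuous, aiming for a contradiction. Then Theorem~\ref{thm:mean-ppspec} gives that $(X,G)$ is uniquely ergodic with unique invariant probability $\mu$, and that $L^{2}(X,\mu)$ is spanned by continuous eigenfunctions. Since every continuous eigenfunction factors through the maximal equicontinuous factor map $p_X\colon X\to Z$, the pullback $p_X^{\,*}\colon L^{2}(Z,\lambda)\to L^{2}(X,\mu)$ (with $\lambda$ the Haar measure on $Z$) is an isometry with dense image, and hence a unitary isomorphism. Equivalently, $p_X$ is a measure-theoretic isomorphism of the measure-preserving systems $(X,\mu,G)$ and $(Z,\lambda,G)$. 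Since $p_X=p_Y\circ\pi$, the image of $\pi^{*}\colon L^{2}(Y,\nu)\to L^{2}(X,\mu)$ contains the image of $p_X^{\,*}=\pi^{*}\circ p_Y^{*}$, which is all of $L^{2}(X,\mu)$. Thus $\pi^{*}$ is surjective, so $\pi$ is itself a measure-theoretic isomorphism between $(X,\mu,G)$ and $(Y,\nu,G)$, where $\nu=\pi(\mu)$.

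The finite-fibre hypothesis now lets us upgrade this $L^{2}$-level isomorphism to a pointwise statement about fibre cardinalities. Consider the candidate measure $\eta=\int_{Y}\eta_{y}\,d\nu(y)$, where $\eta_{y}$ is the uniform probability on the finite fibre $\pi^{-1}(y)$. Since each $g\in G$ permutes fibres bijectively and $|\pi^{-1}(gy)|=|\pi^{-1}(y)|$, a direct computation shows that $\eta$ is $G$-invariant and projects to $\nu$ under $\pi$. By unique ergodicity, $\eta=\mu$, so the disintegration of $\mu$ over $\nu$ must be uniform on $\pi^{-1}(y)$ for $\nu$-a.e.~$y$. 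On the other hand, the fact that $\pi$ is a measure-theoretic isomorphism forces each disintegration measure $\mu_{y}$ to be a Dirac mass for $\nu$-a.e.~$y$. Combining, we obtain $|\pi^{-1}(y)|=1$ for $\nu$-a.e.~$y$, i.e., $\pi$ is a regular extension of $(Y,G)$, contradicting the standing hypothesis.

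The main obstacle is the final step: upgrading the $L^{2}$-level isomorphism to the $\nu$-almost everywhere pointwise statement on fibre cardinalities. The finite-fibre hypothesis is precisely what makes the uniform-disintegration identification work, in tandem with unique ergodicity; everything else amounts to functional-analytic bookkeeping around Theorem~\ref{thm:mean-ppspec} and the factorisation $p_X=p_Y\circ\pi$. Without the finiteness of fibres, the uniform-disintegration candidate $\eta$ is no longer even well-defined, so the passage from measure-theoretic to topological fibre information cannot be carried out in this direct fashion.
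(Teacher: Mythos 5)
The paper does not supply its own proof of this statement; it is cited directly as \cite[Cor.~7.7]{FuhrmannGrogerLenz2022}, so there is no paper-internal argument to compare against. Your proof is a correct, self-contained reconstruction along the natural lines: the contradiction via Theorem~\ref{thm:mean-ppspec} gives unique ergodicity and a spanning family of continuous eigenfunctions; since these are pulled back from the maximal equicontinuous factor, $\pi^*$ is surjective on $L^2$, which forces the disintegration $\mu_y$ of $\mu$ over $\nu=\pi_*\mu$ to be a Dirac mass for $\nu$-a.e.\ $y$; meanwhile the finite-fibre hypothesis together with unique ergodicity force $\mu_y$ to be the uniform probability on $\pi^{-1}(y)$; combining yields $\lvert\pi^{-1}(y)\rvert=1$ for $\nu$-a.e.\ $y$, contradicting irregularity (and unique ergodicity is exactly what lets one verify the regularity condition against the single invariant measure $\mu$). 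Two minor points are worth flagging explicitly: the identity $p_X=p_Y\circ\pi$ holds only up to post-composition with a rotation of the MEF, which does not affect the surjectivity of the pullbacks, and the well-definedness of $\eta=\int_Y\eta_y\,d\nu(y)$ requires the (standard, but worth a line) observation that $y\mapsto\eta_y$ is measurable for a finite-to-one Borel map between compact metric spaces. The implicit minimality of $(X,G)$, and hence of its factor $(Y,G)$, which you correctly flag, is indeed used both in Theorem~\ref{thm:mean-ppspec} and in the identification of the MEFs.
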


The results above give us the following partial  generalisation of Dekking's result for when $c(\boldsymbol{\theta})>1$.

\begin{proposition}\label{prop:mixed-spec}
Let ${\boldsymbol \theta}$  be a 
torsion-free  directive sequence defined on a sequence of bounded alphabets. Suppose $(X_{\boldsymbol{\theta}},\sigma)$ is uniquely ergodic with unique $\sigma$-invariant measure $\mu$ and that $c(\boldsymbol{\theta})>1$. Then either
\begin{itemize}
\item $(X_{\boldsymbol{\theta}},\sigma,\mu)$ has mixed spectrum, or
\item $(X_{\boldsymbol{\theta}},\sigma,\mu)$ has discrete spectrum but admits discontinuous eigenfunctions. 
\end{itemize}
\end{proposition}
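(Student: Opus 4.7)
The plan is to derive the stated dichotomy by combining Corollary~\ref{cor:fibre-card-column-num} with the mean-equicontinuity characterisations recorded as Theorems~\ref{thm:mean-ppspec} and~\ref{thm:mean-irreg}. First I invoke Corollary~\ref{cor:fibre-card-column-num} on the maximal equicontinuous factor map $\pi_{\rm MEF}\colon X_{\boldsymbol{\theta}}\to Z$, where $Z=\mathbb{Z}_{(q_n)}\times\mathbb{Z}/h\mathbb{Z}$ by Corollary~\ref{cor:MEF-torsion-free}, to observe that every fibre of $\pi_{\rm MEF}$ has cardinality at least $c(\boldsymbol{\theta})\geq 2$. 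In particular $\{z\in Z\colon \lvert\pi_{\rm MEF}^{-1}(z)\rvert=1\}=\varnothing$, so $(X_{\boldsymbol{\theta}},\sigma)$ is an irregular extension of $(Z,+1)$ in the sense of Equation~\eqref{eq:regular-ext}, since the pushforward measure of this empty set is $0$ rather than $1$.

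Next I verify that every fibre of $\pi_{\rm MEF}$ is finite. Passing to the pure base via Theorem~\ref{thm:height} reduces this to showing finiteness for every fibre of the tiling factor map of $\boldsymbol{\bar{\theta}}$. For that map, \cite[Lemma~5.13]{BSTY-2019} gives finiteness of the set of limit words, i.e.\ of the tiling fibre over $0$; and the bounded-alphabet hypothesis yields a uniform bound on every other fibre, since a point with prescribed tiling value is determined by its sequence of central $n$-supertiles, which sits in an inverse limit of sets of cardinality at most $\lvert\mathcal{A}_n\rvert$.

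With finite fibres and irregularity established, and noting that $(Z,+1)$ is equicontinuous and hence coincides with its own maximal equicontinuous factor, Theorem~\ref{thm:mean-irreg} applies and yields that $(X_{\boldsymbol{\theta}},\sigma)$ is not mean equicontinuous. Primitivity of $\boldsymbol{\theta}$ gives minimality, unique ergodicity is assumed, and Theorem~\ref{thm:mean-ppspec} then forces the failure of the conjunction ``discrete spectrum with all eigenfunctions continuous''. The two remaining possibilities are exactly the stated alternatives: a genuine continuous spectral component (mixed spectrum), or pure discrete spectrum with at least one discontinuous measurable eigenfunction. The main technical obstacle is establishing uniform finiteness of the fibres of $\pi_{\rm MEF}$ under bounded alphabets; once that is in place, the rest is a direct assembly of the cited theorems.
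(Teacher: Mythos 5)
Your proof is correct and follows essentially the same route as the paper: invoke Corollary~\ref{cor:fibre-card-column-num} to show every fibre of $\pi_{\rm MEF}$ has cardinality at least $c(\boldsymbol{\theta})\geq 2$, deduce irregularity of the extension, note finiteness of fibres from the bounded-alphabet hypothesis, apply Theorem~\ref{thm:mean-irreg} to rule out mean equicontinuity, and conclude via Theorem~\ref{thm:mean-ppspec}. One minor slip: you cite primitivity for minimality, but primitivity is not assumed here; minimality is built into the definition of a torsion-free directive sequence (Definition~\ref{def:torsion-free}), which is what the paper cites. Your extra detail on finiteness of fibres via the inverse-limit/pure-base argument is harmless, though the paper's terser appeal to $\max_n\lvert\mathcal A_n\rvert<\infty$ already suffices.
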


\begin{proof}Let $h$ be the height of  ${\boldsymbol{\theta}}$. 
In order to use Theorem~\ref{thm:mean-irreg}, set $(X,G)\coloneqq (X_{\boldsymbol{\theta}},\sigma)$ and $(Y,G)\coloneqq (\Z_{(q_n)}\times \Z/h\Z,+(1,1))$, where $G=\mathbb{Z}$ for both dynamical systems.
 Here, the factor map $\pi\coloneqq \pi_{\text{MEF}}$ is the same as in Theorems~\ref{thm:dekking-sadic} and \ref{thm:height}, where we work with a recognizable representation of $\boldsymbol{\theta}  $, as guaranteed by Theorem \ref{thm:sadic-recognizable}. Suppose $c(\boldsymbol{\theta})>1$. From Corollary~\ref{cor:fibre-card-column-num}, we know that $\pi_{\text{MEF}}$ is at least $c(\boldsymbol{\theta})$-to-$1$ everywhere over $\Z_{(q_n)}\times \Z/h\Z$. 
This means that the set 
\[
\big\{ (z,m)\in \Z_{(q_n)}\times \Z/h\Z :\lvert\pi^{-1}((z,m))\rvert=1\big\}
\]
is empty, and hence is always of zero $\pi_{\text{MEF}}(\mu)$ measure, where $\mu$ the unique shift-invariant measure on $X_{\boldsymbol{\theta}}$. 
This immediately implies that $(X_{\boldsymbol{\theta}},\sigma)$ is an irregular extension of $(\Z_{(q_n)}\times \Z/h\Z,+(1,1))$. Moreover, since $\max_{n\geq 0}\lvert\mathcal{A}_n\rvert<\infty$, we also have that $\lvert\pi^{-1}((z,m))\rvert<\infty $ for all $z\in \Z_{(q_n)}\times \Z/h\Z$. It then follows from Theorem~\ref{thm:mean-irreg} that $(X_{\boldsymbol{\theta}},\sigma)$ cannot be mean equicontinuous.
The minimality of $(X_{\boldsymbol{\theta}},\sigma)$ follows by the definition of torsion-freeness.
Theorem~\ref{thm:mean-ppspec} now implies that $(X_{\boldsymbol{\theta}},\sigma,\mu)$ either has mixed spectrum or has pure discrete spectrum but has eigenfunctions which are discontinuous.
\end{proof}

We note  that in the case of a stationary directive sequence, the second case does not oc\-cur, as all measurable eigenvalues are continuous  \cite{Dekking1977}. This is no longer true in the $S$-adic set\-ting; see \cite{DownarowiczLacroix1998} and  \cite[Sec.~7]{BressaudDurandMaass2010}.  An example of a non-uniquely ergodic Toeplitz  shift with discrete spectrum and which admits a discontinuous ra\-tion\-al eigenvalue can be found in \cite[Sec.~7]{BressaudDurandMaass2010}. Also, in \cite{DownarowiczLacroix1998}, the authors construct irregular extensions of odometers which are strictly ergodic and have discrete spectrum, including irrational ei\-gen\-val\-ues. In light of the evidence, i.e., Proposition \ref{prop:mixed-spec} and Proposition~\ref{prop:discrete-spec}, we conjecture that 
 if $c({\boldsymbol \theta})>1$ then it must have mixed spectrum.

  In future work we will investigate measurable eigenvalues of constant-length $S$-adic shifts.

\subsection*{Acknowledgements}
{The authors thank Michael Baake,  Gabriel Fuhrmann and Johannes Kellendonk for illuminating discussions, Christopher Cabezas for questions and a discussion which led to Example \ref{ex:quasirec-under-factor}, and Clemens M\"{u}llner for a comment relating the column number to the height. The authors also thank the reviewer for helpful comments which improved the manuscript.

This work was supported by  the
   EPSRC  grant numbers EP/V007459/2 and EP/S010335/1. The second author is supported by DAAD through a PRIME Fellowship. The first author received funding from ANID/FONDECYT Postdoctorado 3230159 during the final phase of this work.}

\bibliographystyle{amsplain}

\end{document}